\def\mfK{\mathfrak K}
\def\Pbar{\overline{P}}
\def\XXX{\rho}
\def\Lbar{\ov{L}}
\def\id{\operatorname{id}}
\def\e{\epsilon}
\def\DGGamma{\operatorname{DG}\Gamma}
\def\DG{\operatorname{DG}}
\def\comult{\operatorname{comult}}
\def\proj{\operatorname{proj}}
\def \ms{\medskip}
\def\bw{\bigwedge}
\def\ov{\overline}
\def\mfp{\mathfrak p}
\def\M{\mathfrak M}
\def\rank{\operatorname{rank}}
\def\p{\oplus}
\def\im{\operatorname{im}}
\def\m{\mathfrak m}
\def\a{\alpha}
\def\Hom{\operatorname{Hom}}
\def\kk {\pmb k}
\def\Tor{\operatorname{Tor}}
\def\t{\otimes}
\def\w{\wedge}
\def\HH{\operatorname{H}}
\def\Ext{\operatorname{Ext}}
\def\ts{\textstyle}
\def\Ass{\operatorname{Ass}}
\def\mfq{\mathfrak q}
\newtheorem{theorem}{Theorem}[section]
\newtheorem{lemma}[theorem]{Lemma}
\newtheorem{proposition-no-advance}[equation]{Proposition}
\newtheorem{claim-no-advance}[equation]{Claim}
\newtheorem{observation}[theorem]{Observation}
\newtheorem{quick consequences}[theorem]{Quick Consequences}
\theoremstyle{definition}
\newtheorem{definitions and conventions}[theorem]{Definition and Conventions}
\newtheorem{definition-no-advance}[equation]{Definition}
\newtheorem{facts and definitions}[theorem]{Facts and Definitions}
\newtheorem{definition}[theorem]{Definition}
\newtheorem{remark-no-advance}[equation]{Remark}
\newtheorem{remarks-no-advance}[equation]{Remarks}
\newtheorem{remarks}[theorem]{Remarks}
\newtheorem{data}[theorem]{Data}
\newtheorem{careful calculation}[theorem]{Careful Calculation}
\newtheorem{present summary}[theorem]{Present Summary}
\newtheorem{example}[theorem]{Example}
\newtheorem{further reductions}[theorem]{Further Reductions}
\newtheorem{chunk}[theorem]{}
\newtheorem{chunk-no-advance}[equation]{}
\newtheorem{subchunk}[equation]{}
\newtheorem{marching orders}[theorem]{Marching Orders}
\newtheorem{circle the wagons}[theorem]{Circle the wagons}
\numberwithin{equation}{theorem}
\begin{document}

\baselineskip=16pt

\title[Use $\DG$-methods  to build a  matrix factorization
]
{Use $\DG$-methods  to build a  matrix factorization
}

\date{\today}

\author[A.~R.~Kustin]{Andrew R.~Kustin}
\address{Andrew R.~Kustin\\ Department of Mathematics\\ University of South Carolina\\\newline 
Columbia\\ SC 29208\\ U.S.A.} \email{kustin@math.sc.edu}

\subjclass[2010]{13D02, 16E45}

\keywords{Differential graded algebra, Gorenstein ideal, homotopy of complexes, matrix factorization, Poincar\'e duality algebra.}

\thanks{}

\begin{abstract}  Let $P$ be a commutative Noetherian ring, $\mfK$ be an ideal of $P$ which is   generated by a regular sequence of length four, $f$ be a regular element of $P$, and $\Pbar$ be the hypersurface ring $P/(f)$. Assume that $\mfK:f$ is a grade four Gorenstein ideal of $P$. We give a resolution $N$ of $\Pbar/\mfK\Pbar$ 
by free $\Pbar$-modules. 
 The resolution $N$ is built from a Differential Graded Algebra resolution of $P/(\mfK:f)$ by free $P$-modules, together with one homotopy map. 
In particular, we give an explicit form for the matrix factorization which is the infinite tail of the resolution $N$.
\end{abstract}

\maketitle

\section{Introduction.}\label{19}
Let $P$ be a commutative Noetherian ring, $\mfK$ be an ideal of $P$, $f$ be a regular element of $P$, and $\Pbar$ be the hypersurface ring $P/(f)$. This paper grew out of a desire to find an efficient method for resolving $\Pbar/\mfK\Pbar$ by free $\Pbar$-modules. We are particularly interested in this problem when $\mfK$ is generated by a regular sequence. 

The ultimate goal is to compare the resolution of the 
Frobenius
 powers $\Pbar/\mfK^{[q]}\Pbar$ to the resolution of $\Pbar/\mfK\Pbar$, for $q=p^e$, where $P$ is a ring of  prime characteristic $p$. The most interesting feature of the $\Pbar$-resolution of $\Pbar/\mfK^{[q]}\Pbar$ is the infinite tail of the resolution, which is a matrix factorization of $f$. One goal is to determine the number of infinite tails that appear as $q=p^e$ varies and the least positive value of $e'$  for which the infinite tail of the resolution of $\Pbar/\mfK^{[qq']}\Pbar$ is isomorphic to the infinite tail of the resolution of $\Pbar/\mfK^{[q]}\Pbar$,  with  $q'=p^{e'}$.

This ultimate goal has been 
 accomplished 
 when $P=\kk[x,y,z]$, $\mfK$ is the maximal ideal $(x,y,z)$, and $\kk$ is a field of characteristic $p$.  If $f=x^n+y^n+z^n$, 
then the Betti numbers of $\Pbar/\mfK^{[q]}\Pbar$ are calculated in \cite{KuU} and the resolution of  $\Pbar/\mfK^{[q]}\Pbar$ is given in \cite{KRV}. If 
$f$ is a generic homogeneous form of $P$, 
 then the graded Betti numbers of $\Pbar/\mfK^{[q]}\Pbar$ are calculated in \cite{MRR}.

The present paper gives a resolution $N$ of $\Pbar/\mfK\Pbar$ 
by free $\Pbar$-modules when $\mfK$ is generated by a regular sequence of length four, $(\mfK:f)$ is a Gorenstein ideal of grade four in $P$,  and $P$ is an arbitrary commutative Noetherian ring. The resolution $N$ is built from a Differential Graded Algebra resolution $$M:\quad 0\to M_4\xrightarrow{m_4} M_3\xrightarrow{m_3} M_2\xrightarrow{m_2} M_1\xrightarrow{m_1} M_0=P$$ of $P/(\mfK:f)$ together with a homotopy map $X:M_1\to M_2$. The 
 resolution $N$ is given in Theorem~\ref{thm17}. 
The matrix factorization which comprises the infinite tail of $N$ is given in Theorem~\ref{17.9}. The precise properties of the homotopy map $X:M_1\to M_2$ are also given in Theorem~\ref{17.9}. The most important part of the paper is the proof that $X$ exists. This proof is given in Sections~\ref{5}, \ref{6}, and \ref{7}.  

The cleanest version of the matrix factorization of Theorem~\ref{17.9} occurs when $f$ is the element $\beta_0(1)$ of $P$ which corresponds to the product 
$$\a_1(\e_1)\cdot \a_1(\e_2)\cdot \a_1(\e_3)\cdot \a_1(\e_4)$$ in $M_4$, where
$K=\bw^\bullet(\bigoplus P\e_i)$ is a Koszul complex which resolves $P/\mfK$, and $$\a:K\to M$$ is a map of $\DGGamma$-algebras. In this case, the matrix factorization of $f$ is given by
$$\bmatrix X|_{M_{1,2}}&\a_2& m_3|_{M_{3,2}}\endbmatrix \quad \text{and}\quad
\bmatrix \proj_{M_{1,2}}\circ m_2\\\beta_2\\\proj_{M_{3,2}}\circ X^\dagger \endbmatrix.$$
The decompositions $M_1=M_{1,1}\p M_{1,2}$ and $M_3=M_{3,1}\p M_{3,2}$ are explained in the text, $\a_2$ is the degree two component of $\a$, and $\beta_2$ and $X^\dagger$ essentially are maps adjoint to $\a_2$ and $X$, respectively. An arbitrary $f$ has the form $r\beta_0(1)+\kappa$, where $r\in P$ and $\kappa\in \mfK$. Once one has a matrix factorization for $\beta_0(1)$, then there is no added difficulty in finding the matrix factorization for an  arbitrary $f$; but the formulas become more complicated. In particular, a streamlined version of the paper can be read if 
one takes $r=1$ and $\sigma$, $z_i$, $w_i$, $Y$, and $W$ all to be zero.

In Section~\ref{10} we describe two other interpretations of the map $X:M_1\to M_2$. On the one hand, $X$ is a higher order multiplication in the sense of \cite{Sl93,Ku-aci-dg}. On the other hand, $X$ and its adjoint give a homotopy from the complex $M$ to itself.

\tableofcontents

\section{Notation, conventions, and elementary results.}\label{Prelims}

\begin{chunk}The {\it grade} of a proper ideal $I$ in a commutative Noetherian ring $P$ is the length of the
longest regular sequence on $P$ in $I$. The ideal $I$ of $P$ is called {\it
perfect} if the grade of $I$ is equal to the projective dimension
 of the $P$-module $P/I$. The grade $g$ ideal $I$ is called {\it Gorenstein} if it is
perfect and $\Ext_{P}^{g}(P/I,P)\cong P/I.$ It follows from Bass \cite[Prop.~5.1]{Bss} that if $I$ is a Gorenstein ideal in a Gorenstein ring $P$, then $P/I$ is also a Gorenstein ring. 
\end{chunk}

\begin{chunk} 
 A complex $\cdots \to F_2\to F_1\to F_0\to 0$ is called {\it acyclic} if the only non-zero homology occurs in position zero. \end{chunk}

\begin{chunk}\label{2.1} Let $P$ be a commutative Noetherian ring, 
  $X$ be a free $P$-module, and $Y$ be a $P$-module. 
The
rules for a divided power algebra $D_\bullet X$ are recorded in \cite[Def.~1.7.1]{GL}  or  \cite[Appendix 2]{Ei95}.
(In practice these rules say that $x^{
(a)}$ behaves like $x^
a/(a!)$ would behave if $a!$ were
a unit in $P$.) Two rules that we use often are
\begin{align}
\label{2.2.a} (px)^{(n)}&=p^nx^{(n)},&&\text{for $p\in P$ and $x\in X$, and}\\ 
(x+y)^{(n)}&=\sum_{i=0}^n x^{(i)}y^{(n-i)},&&\text{for  $x,y\in X$.}\notag\end{align}
If $x$ and $x'$ are elements of $X$, then $x\cdot x'=x'\cdot x$ in $D_2(X)$. The co-multiplication homomorphism $$\comult:D_2X\to X\t_PX$$ sends $x^{(2)}$ to $x\t x$ and sends $x\cdot x'$ to $x\t x'+x'\t x$, for $x,x'$ in $X$. Often we will describe a homomorphism $\phi: D_2X\to Y$ by giving the value of $\phi(x^{(2)})$ for each $x\in X$. One then automatically knows the value of $\phi(x\cdot x')$, for $x,x'\in X$ because $$(x+x')^{(2)}=x^{(2)}+x\cdot x'+{x'}^{(2)}.$$ \end{chunk}

\begin{chunk}
If $P$ is a ring and $A$, $B$, and $C$ are $P$-modules, then the $P$-module homomorphism $\phi: A\otimes_P B\to C$ is a {\it perfect pairing} if the induced $P$-module homomorphisms $A\to \mathrm{Hom}_P(B,C)$ and $B\to \mathrm{Hom}_P(A,C)$,  given by $a\mapsto \phi(a\otimes \underline{\phantom{X}})$ and $b\mapsto \phi(\underline{\phantom{X}}\otimes b)$, respectively, are isomorphisms.
\end{chunk}

\begin{chunk}
\label{2.4}
A {\it Differential Graded  
algebra} 
$F$ (written \underline{$\DG$-algebra}) over the commutative Noetherian ring $P$ is a complex of finitely generated free $P$-modules $(F, d)$:
$$\cdots 
\xrightarrow{d_2}F_1\xrightarrow{d_1}F_0=P,$$
together with a unitary, associative  multiplication $F\t_PF\to F$, which satisfies
\begin{enumerate}[\rm(a)]
\item $F_iF_j\subseteq F_{i+j}$,
\item\label{2.4.b} $d_{i+j}(x_ix_j)=d_i(x_i)x_j+(-1)^i x_id_j(x_j)$,
\item $x_ix_j=(-1)^{ij}x_jx_i$, and
\item $x_i^2=0$, when  $i$ is odd,
\end{enumerate}
for  $x_{\ell}\in F_{\ell}$. 
The $\DG$-algebra $F$ is called a \underline{$\DGGamma$-algebra} {\rm(}or a $\DG$-algebra with divided powers{\rm)} if, for each positive even index $i$ and each element $x_i$ of $F_i$, there is a family of elements $\{x_i^{(k)}\}$ which satisfy the divided power axioms of \ref{2.1}, 
 and which also satisfy
\begin{equation}\label{2.3.1}d_{ik}(x_i^{(k)})=d_i(x_i) x_i^{(k-1)}.\end{equation}
The $\DG$-algebra $F$ 
 exhibits {\it Poincar\'e duality} if there there is an integer $m$ such that $F_i=0$ for $m< i$, $F_m$ is isomorphic to $P$, and for each integer $i$, the multiplication map $$F_i\t_{P} F_{m-i}\to F_m$$ is a perfect pairing of $P$-modules.
\end{chunk}

\begin{example}
The Koszul complex is the prototype of a $\DGGamma$-algebra which exhibits Poincar\'e duality. \end{example}

Lemma~\ref{apr29-19} is used at a critical spot in 
 the proof of Lemma~\ref{17.34}. The assertion is obvious if $P$ is a local ring or if $P$ is  a domain; however the assertion holds without any hypothesis imposed on $P$. 

\begin{lemma}
\label{apr29-19} Let $P$ be a commutative Noetherian ring and $\mfK$ be an ideal in $P$ which is generated by a regular sequence, then there exists a regular sequence  $a_1,\dots,a_n$ in $\mfK$ which generates $\mfK$ with the property that each $a_i$ is a regular element of $P$.\end{lemma}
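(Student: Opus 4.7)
My plan is to start from the given regular sequence $b_1,\dots,b_n$ generating $\mfK$ and modify its entries one at a time. I set $a_1:=b_1$, which is a nonzerodivisor on $P$ because it starts a regular sequence, and inductively assume I have produced $a_1,\dots,a_i$ such that $(a_1,\dots,a_i)=(b_1,\dots,b_i)$, each $a_j$ is a nonzerodivisor on $P$, and $a_1,\dots,a_i,b_{i+1},\dots,b_n$ is still a regular sequence. I then look for $a_{i+1}$ of the form $b_{i+1}+c$ with $c\in I:=(a_1,\dots,a_i)$. Any such choice preserves both the ideal $(a_1,\dots,a_{i+1})=(b_1,\dots,b_{i+1})$ and the regular-sequence property past $a_{i+1}$, because $a_{i+1}\equiv b_{i+1}\pmod{I}$ implies that the image of $a_{i+1}$ in $P/I$ agrees with that of $b_{i+1}$ and is therefore a nonzerodivisor. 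The only nontrivial requirement is to choose $c$ so that $a_{i+1}$ itself is a nonzerodivisor on $P$, i.e.\ lies outside every associated prime of $P$.

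The main ingredient will be a coset refinement of prime avoidance: \emph{if $I\subseteq P$ is an ideal not contained in any of finitely many primes $\mathfrak p_1,\dots,\mathfrak p_s$, and $b\in P$ is arbitrary, then there exists $c\in I$ with $b+c\notin\mathfrak p_j$ for every $j$.} The hypothesis holds in our setting with the $\mathfrak p_j$ running through $\Ass(P)$, because $a_1\in I$ is a nonzerodivisor and therefore lies outside every associated prime. I would prove the refinement by induction on $s$: after discarding any prime strictly contained in another (which preserves both the hypothesis and the target, since avoiding a larger prime forces avoidance of any smaller one contained in it), use induction to find $c_0\in I$ that handles $\mathfrak p_1,\dots,\mathfrak p_{s-1}$; if $b+c_0\in\mathfrak p_s$, adjust by an element $e\in(I\cap\mathfrak p_1\cap\cdots\cap\mathfrak p_{s-1})\setminus\mathfrak p_s$ and set $c:=c_0+e$. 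Such an $e$ exists because $I\cdot\mathfrak p_1\cdots\mathfrak p_{s-1}\subseteq I\cap\mathfrak p_1\cap\cdots\cap\mathfrak p_{s-1}$, so if this intersection were contained in $\mathfrak p_s$, the primality of $\mathfrak p_s$ would force one of $I,\mathfrak p_1,\dots,\mathfrak p_{s-1}$ into $\mathfrak p_s$, contradicting either the hypothesis or the pairwise incomparability of the primes.

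Applying the coset refinement with $I=(a_1,\dots,a_i)$, $b=b_{i+1}$, and $\mathfrak p_1,\dots,\mathfrak p_s$ enumerating the associated primes of $P$ produces the required $c$, and $n$ iterations of the inductive step yield the desired regular sequence $a_1,\dots,a_n$ in $\mfK$ whose entries are individually nonzerodivisors on $P$. The principal obstacle I anticipate is precisely the coset step: plain prime avoidance applied to $I$ alone would only supply \emph{some} nonzerodivisor in $I$, whereas here I need a nonzerodivisor in the prescribed coset $b_{i+1}+I$ in order to propagate the regular-sequence invariants. Once the refinement is in hand, the remainder is straightforward bookkeeping.
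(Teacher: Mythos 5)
Your proof is correct, and the coset-avoidance lemma you invoke is stated and proved accurately (including the necessary reduction to pairwise incomparable primes, and the correct verification that $(I\cap\mathfrak p_1\cap\cdots\cap\mathfrak p_{s-1})\not\subseteq\mathfrak p_s$ using primality of $\mathfrak p_s$).

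The route differs from the paper's in two respects. First, the paper perturbs each $a_i$ ($i\ge 2$) only by a multiple of $a_1$; since all these perturbations live in $(a_1)$, they can be applied simultaneously rather than iteratively, and one never needs to track an evolving partial ideal $I=(a_1,\dots,a_i)$. Second, and more substantively, the paper avoids proving a coset prime avoidance lemma altogether: it passes to the maximal elements $S$ of $\Ass(P)$, splits $S$ into the primes $S_1$ containing $a_i$ and the primes $S_2$ not containing $a_i$, and constructs the multiplier explicitly as $\prod_{\mathfrak p\in S_2}p_{\mathfrak p}$ with each $p_{\mathfrak p}\in\mathfrak p\setminus\bigcup_{\mathfrak q\in S_1}\mathfrak q$, supplied by ordinary prime avoidance. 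The trade-off is that your argument rests on a clean, reusable general lemma (coset prime avoidance for prime ideals) at the cost of proving it, while the paper's argument is ad hoc but self-contained, requiring only the textbook form of prime avoidance. Both are valid and roughly the same length; your version generalizes more readily, the paper's stays closer to elementary tools.
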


\begin{proof} Let $a_1,\dots,a_n$ be a regular sequence which generates $\mfK$. Observe that for any choice of $p_2,\dots,p_n$ in $P$, the elements
$a_1,a_2+p_2a_1,\dots,a_n+p_na_1$  also form a regular sequence 
which generates $\mfK$. 
Fix an integer  $i$ with $2\le i\le n$. We prove there exists an element $p_i\in P$ with $a_i+p_ia_1$  a regular element of $P$.
 Let
 $$S=\{\mfp\in \Ass (P)\mid \text{ $\mfp$ is not properly contained in $\mfq$ for any $\mfq\in \Ass(P)$}\}.$$
The point is that the set of zero divisors of $P$ is $\cup_{\mfp\in S}\, \mfp$ and no prime of $S$ contains another prime of $S$. 
Decompose $S$ into two subsets:
$$S_1=\{\mfp\in S\mid a_i\in \mfp\}\quad\text{and}\quad  S_2=\{\mfp\in S\mid a_i\notin \mfp\}.$$ If $\mfp\in S_2$, then $\mfp\not\subseteq \mfq$ for any $\mfq$ of $S_1$. Thus, the prime avoidance lemma ensures that  $\mfp\not\subseteq \cup_{\mfq\in S_1}\mfq$ and there exists an element $p_\mfp\in \mfp\setminus \cup_{\mfq\in S_1}\mfq$. Observe that
$$\ts a_i+(\prod_{\mfp\in S_2}p_{\mfp}) a_1$$ is a regular element on $P$.
\end{proof}

\section{Matrix factorization.}

\begin{data}\label{data17} Let $P$ be a commutative Noetherian ring,   $f$ be a regular element in $P$, 
 $\mfK$ be an ideal of $P$ which is generated by a regular sequence of length  
 four, 
and \begin{equation}\label{17.45}M:\quad 0\to M_4\xrightarrow{m_4} M_3 \xrightarrow {m_3} M_2 \xrightarrow
{m_2} M_1 \xrightarrow{m_1} M_0=P\end{equation}
 be a complex of length four which is a resolution of $P/(\mfK:f)$ by free $P$-modules. Assume that
\begin{enumerate}[\rm(a)]
\item\label{data17.a} $M$ is a 
 $\DGGamma$-algebra which exhibits Poincar\'e duality, and 
\item \label{data17.b}
the module $M_1$ is the direct sum of two free submodules $$M_1=M_{1,1}\p M_{1,2},$$ with $\rank M_{1,1}=4$ and $m_1(M_{1,1})=\mfK$. 
\end{enumerate}\end{data}

\begin{remarks}\label{Nov-22}

$ $

\begin{enumerate}[\rm(a)]

\item\label{nov-22} According to \cite{Ku-DG}, every self-dual resolution $$M:\quad 0\to M_4\to M_3\to M_2\to M_1\to M_0=P$$ 
 is a $\DGGamma$-algebra which exhibits Poincar\'e duality. Earlier versions of this theorem \cite{KM80,Ku87} proved the result when $P$ is Gorenstein and local and $M$ is a minimal resolution. It is shown in \cite{Ku-DG} that these three hypotheses are unnecessary.

\item It is not important for our purposes that the resolution $M$ of $P/(\mfK:f)$ be a minimal resolution (when this notion is defined). Indeed,  hypothesis \ref{data17}.(\ref{data17.b}) might rule out the possibility of $M$ being a minimal resolution. Nonetheless, the result of \cite{Ku-DG} may be applied in order to obtain a resolution $M$ which satisfies both hypotheses \ref{data17}.(\ref{data17.a}) and \ref{data17}.(\ref{data17.b}).
\end{enumerate}
\end{remarks}

There are  three results in this paper. 
 Theorem \ref{17.9} gives an explicit matrix factorization of $f$ provided there 
exists a map $X:M_1\to M_2$ which satisfies five properties;
Theorem~\ref{17.11} states   that the map $X$ exists; and Theorem~\ref{thm17} 
states  
that the matrix factorization of Theorem~\ref{17.9} induces   the infinite  tail in the resolution of $P/(f,\mfK)$ by free $P/(f)$-modules.

\begin{definitions and conventions}\label{17.42} Adopt Data~{\rm\ref{data17}}.
Let \begin{equation}K:\quad 0\to K_4\xrightarrow{k_4} K_3 \xrightarrow {k_3} K_2 \xrightarrow
{k_2} K_1 \xrightarrow{k_1} K_0=P\label{17.48}\end{equation} be the Koszul complex which is a resolution of $P/\mfK$. Notice that $K$ is automatically a $\DGGamma$-algebra which exhibits Poincar\'e duality. The elements of $K_i$ are denoted by $\phi_i$, and $$[-]_K:K_4\to P$$ is a fixed orientation isomorphism. The elements of $M_i$  are denoted by $\theta_i$ and $$[-]_M:M_4\to P$$ is a fixed orientation isomorphism.
Define$$\a_0:K_0=P\to M_0=P$$ to be the identity map and define 
$$\a_1:K_1\to M_1=M_{1,1}\p M_{1,2}$$ so that
$\proj_{M_{1,1}}\circ \a_1: K_1\to M_{1,1}$ is the  isomorphism
for which the diagram
\begin{equation}\xymatrix{K_1\ar[rr]^{k_1}\ar[d]_{\proj_{M_{1,1}}\circ \a_1}^{\cong}&&\mfK\\M_{1,1}\ar[urr]_{m_1|_{M_{1,1}}}}\label{5-23.1}\end{equation} commutes; and 
\begin{equation}\proj_{M_{1,2}}\circ \a_1: K_1\to M_{1,2}\quad \text{is the zero map.} \label{17.46}\end{equation} 
(Recall the decomposition of $M_1$ which is described in \ref{data17}.(\ref{data17.b}).)
Define $\a:K\to M$ to be the map of $\DGGamma$-algebras which extends $\a_0$ and $\a_1$.
 
  Define 
$$\beta_i:M_i\to K_i$$ by
\begin{equation}\label{17.2}[\beta_i(\theta_i)\wedge  \phi_{4-i}]_K=[\theta_i\cdot \a_{4-i}(\phi_{4-i})]_M,\end{equation}
for all $\theta_i\in M_i$ and $\phi_{4-i}\in K_{4-i}$. (The fact that $M$ and $K$ are Poincar\'e duality algebras ensures that this definition is meaningful.) \end{definitions and conventions}
 
\begin{subchunk}
\label{17.3.2}Notice that linkage theory 
guarantees that 
$$\mfK:\im m_1=(\mfK, \beta_0(1))\quad\text{and}\quad 
\mfK:\beta_0(1) = \im m_1.$$ On the other hand, linkage theory also guarantees that
$$\mfK:\im m_1=(\mfK, f)\quad\text{and}\quad 
\mfK:f = \im m_1.$$So \begin{equation}\label{oct28} f=r\beta_0(1)+k_1(\sigma)\end{equation} for some $r\in P$ and $\sigma\in K_1$. Usually, $r$ will be a unit in $P$; indeed, for example, if $P$ is a local ring, then $r$ is a unit.
\end{subchunk}

\begin{subchunk}\label{17.7}  Define submodules 
$$M_{3,1}=\{\theta_3\in M_3\mid \theta_3 M_{1,2}=0\}\quad \text{and} \quad M_{3,2}=\{\theta_3\in M_3\mid \theta_3 M_{1,1}=0\}$$ of $M_3$.
The assumption that $M$ is a Poincar\'e duality algebra  
 ensures that 
$$M_3= M_{3,1}\p M_{3,2}$$ and that the multiplication maps
$$M_{1,1}\t_PM_{3,1}\to M_4\quad\text{and}\quad M_{1,2}\t_PM_{3,2}\to M_4$$ are both perfect pairings.
\end{subchunk}

\begin{subchunk}\label{19.3.7}The maps $\beta_4:M_4\to K_4$ and 
$\beta_3|_{M_{3,1}}:M_{3,1}\to K_3$ are isomorphisms. Indeed, the definitions yield that
$$[\beta_4(\theta_4)]_K=[\beta_4(\theta_4)\w 1]_K=[\theta_4\cdot\a_0(1)]_M=[\theta_4]_M,$$ and that  $[-]_K$ and $[-]_M$
are both isomorphisms. Similarly,  the definitions yield that the map
$$M_{3,1}\to \Hom_P(K_1,P),\quad\text{given by}\quad \theta_{3,1}\mapsto [\beta_3(\theta_{3,1})\w -]_K,$$ is an isomorphism. 
 On the other hand, $K$ is a Poincar\'e Duality algebra; so,
$$K_3\to\Hom_P(K_1,P),\quad\text{given by}\quad \phi_{3}\mapsto [\phi_3\w -]_K,$$ is also an isomorphism.
It follows that $\beta_3|_{M_{3,1}}:M_{3,1}\to K_3$ is an isomorphism.\end{subchunk}

\begin{subchunk}
\label{17.8}For each homomorphism $h:M_1\to M_2$, let $$h^\dagger:M_2\to M_3$$ be the homomorphism  
defined by 
 \begin{equation}\notag h^\dagger(\theta_2)\cdot \theta_1=\theta_2\cdot h(\theta_1),\end{equation}for
$\theta_i\in M_i$.
(The existence of $h^\dagger$ is also guaranteed by the assumption that $M$ is a Poincar\'e duality algebra.)\end{subchunk} 

\begin{subchunk}\label{17.8'}The homomorphisms
$$z_i:K_i\to K_{i+1}\quad\text{and}\quad w_i:M_i\to M_{i+1}$$ are defined by
$$z_i(\phi_i)=\phi_i\w \sigma\quad \text{and}\quad w_i(\theta_i)=\theta_i\cdot \a_1(\sigma),$$
for $\phi_i\in K_i$ and $\theta_i\in M_i$.
The homomorphisms 
$$Y:M_2\to K_2\quad\text{and}\quad 
W: K_2\to M_2$$
are defined by
\begin{align*}Y&{}=z_1\circ (\proj_{M_{1,1}}\circ \a_1)^{-1}\circ 
\proj_{M_{1,1}}\circ m_2\quad\text{and}\\
W&{}=m_3\circ(\beta_3|_{M_{3,1}})^{-1}\circ z_2.\end{align*}
Recall from (\ref{5-23.1}) and \ref{19.3.7} that the indicated inverse maps exist.\end{subchunk}

\bigskip
We are now able to state the  result about matrix factorization. This result 
gives an explicit matrix factorization of the $f$ of Data~\ref{data17} in terms of the maps defined in Definition~\ref{17.42} and one other map $X:M_1\to M_2$ provided 
the map $X$  exists and satisfies   five properties. Theorem~\ref{17.11} states   that the map $X$ exists; and Theorem~\ref{thm17} 
states  
that the matrix factorization of Theorem~\ref{17.9} induces   the infinite  tail in the resolution of $P/(f,\mfK)$ by free $P/(f)$-modules.   
Recall from \ref{17.3.2} that the parameter $r$ is usually a unit. In this case, there is no reason to consider the matrix factorization M.F.\ref{19.9.1}. Indeed, in this case, M.F.\ref{19.9.2} is obtained from M.F.\ref{19.9.1} by splitting off a trivial factorization. Furthermore, as was observed in the Introduction, a streamlined, but still meaningful, version of the paper can be read if 
one takes $r=1$ and $\sigma$, $z_i$, $w_i$, $Y$, and $W$ all to be zero.
\begin{theorem}\label{17.9}Adopt the language of Definition~{\rm\ref{17.42}}.  Suppose that $$X:M_1\to M_2$$ is an $R$-module homomorphism which satisfies
\begin{enumerate}[\rm(a)]
\item\label{17.9.a} $X\circ \a_1=0$,
\item\label{17.9.b} $m_2\circ X=\beta_0(1)\cdot \id_{M_1}-\alpha_1\circ\beta_1$,
\item\label{17.9.c} $X\circ m_2+ m_3\circ X^\dagger=\beta_0(1)\cdot \id_{M_2}-\a_2\circ \beta_2$,
\item\label{17.9.d} $X^\dagger\circ X=0$, and
\item\label{17.9.e} $X^\dagger\circ \a_2=0$.
\end{enumerate}
Then the following statements hold.
\begin{enumerate}[\rm M.F. 1.]

\item\label{19.9.1} Let $G_{\text{\rm even}}$ and $G_{\text{\rm odd}}$ be the free $P$-modules
$$G_{\text{\rm even}}=M_{1,2}\p K_2\p M_{3}
\p  K_4\quad
\text{and}\quad 
G_{\text{\rm odd}}=
 M_{2}\p K_3
\p M_4$$
and $g_{\text{\rm even}}: G_{\text{\rm even}}\to  G_{\text{\rm odd}}$ and
$g_{\text{\rm odd}}: G_{\text{\rm odd}}\to  G_{\text{\rm even}}$ be the $P$-module homomorphisms
$$g_{\text{\rm even}}=
 \bmatrix 
(rX-w_1)|_{M_{1,2}}&\a_2&m_3&0\\
0&-z_2&r\beta_3&-k_4\\
0&0&-w_3&\a_4
\endbmatrix\quad
\text{and}$$
$$g_{\text{\rm odd}}=
\bmatrix 
\proj_{M_{1,2}}\circ m_2&0&0 \\
r\beta_2-Y&-k_3&0\\
rX^\dagger+w_2
&\a_3
&m_4\\
0&z_3&r\beta_4
\endbmatrix.$$
Then  the equalities $$g_{\text{\rm odd}}\circ g_{\text{\rm even}}=f \cdot \id_{G_{\text{\rm even}}}\quad \text{and}\quad g_{\text{\rm even}}\circ g_{\text{\rm odd}}=f \cdot \id_{G_{\text{\rm odd}}}$$ hold. 

\item\label{19.9.2} Assume that $r$ is a unit. Let $\acute{G}_{\text{\rm even}}$ and $\acute{G}_{\text{\rm odd}}$ be the free $P$-modules  $$\acute{G}_{\text{\rm even}}=M_{1,2}\p K_2\p  M_{3,2}\quad
\text{and}\quad 
\acute{G}_{\text{\rm odd}}=
 M_{2}$$
$\acute{g}_{\text{\rm even}}: \acute{G}_{\text{\rm even}}\to  \acute{G}_{\text{\rm odd}}$ and
$\acute{g}_{\text{\rm odd}}: \acute{G}_{\text{\rm odd}}\to  \acute{G}_{\text{\rm even}}$ be the $P$-module homomorphisms
$$\acute{g}_{\text{\rm even}}=\bmatrix 
(rX-w_1)|_{M_{1,2}}
&\a_2+r^{-1}W
&m_3|_{M_{3,2}}
 \endbmatrix$$
$$ \acute{g}_{\text{\rm odd}}=\bmatrix 
\proj_{M_{1,2}}\circ m_2\\
r\beta_2-Y\\
\proj_{M_{3,2}}\circ(rX^\dagger+w_2)
\endbmatrix.$$
Then  the equalities $$\acute{g}_{\text{\rm odd}}\circ \acute{g}_{\text{\rm even}}=f \cdot \id_{\acute{G}_{\text{\rm even}}}\quad \text{and}\quad \acute{g}_{\text{\rm even}}\circ \acute{g}_{\text{\rm odd}}=f \cdot \id_{\acute{G}_{\text{\rm odd}}}$$ hold. 

\end{enumerate}
\end{theorem}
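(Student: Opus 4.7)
The plan is to verify the two matrix identities $g_{\text{odd}}\circ g_{\text{even}} = f\cdot\id_{G_{\text{even}}}$ and $g_{\text{even}}\circ g_{\text{odd}} = f\cdot\id_{G_{\text{odd}}}$ of M.F.~\ref{19.9.1} directly, entry by entry. Each diagonal entry must sum to $f = r\beta_0(1)+k_1(\sigma)$: the $r\beta_0(1)$ part is supplied either by (\ref{17.9.b}) or (\ref{17.9.c}), or by the multiplicative identities $\alpha_i\circ\beta_i = \beta_0(1)\id_{M_i}$ and $\beta_i\circ\alpha_i = \beta_0(1)\id_{K_i}$ (both rooted in $[\alpha_4(\phi)]_M = \beta_0(1)[\phi]_K$), while the $k_1(\sigma)$ part appears from a Leibniz-rule expansion of the $w_i$ and $z_i$ entries (for example $w_2\circ m_3 - m_4\circ w_3 = k_1(\sigma)\id_{M_3}$). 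Off-diagonal entries vanish through (\ref{17.9.a}), (\ref{17.9.d}), (\ref{17.9.e}); the $\DGGamma$-algebra map property of $\alpha$ (so $m_i\alpha_i = \alpha_{i-1}k_i$, $\alpha_i(\phi)\alpha_j(\psi)=\alpha_{i+j}(\phi\wedge\psi)$, and in particular $\alpha_1(\sigma)^2=0$); the dual chain-map identity $k_i\beta_i = \beta_{i-1}m_i$, derived from (\ref{17.2}) by Leibniz and non-degeneracy of the Poincar\'e pairings; and Koszul acyclicity on the pure-Koszul blocks. M.F.~\ref{19.9.2} will then follow from M.F.~\ref{19.9.1} by eliminating a trivial factorization.

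For the diagonal $(M_{1,2},M_{1,2})$ entry of $g_{\text{odd}}g_{\text{even}}$, namely $\proj_{M_{1,2}}\circ m_2\circ(rX-w_1)|_{M_{1,2}}$, property (\ref{17.9.b}) together with $\im\alpha_1\subseteq M_{1,1}$ gives $r\beta_0(1)\id_{M_{1,2}}$ from the $rX$ piece, and the Leibniz expansion $m_2w_1(\theta_1)=m_1(\theta_1)\alpha_1(\sigma)-k_1(\sigma)\theta_1$ contributes $k_1(\sigma)\id_{M_{1,2}}$ after projection. The $(M_3,M_3)$ diagonal equals $r(X^\dagger m_3+\alpha_3\beta_3)+(w_2m_3-m_4w_3)$: the first parenthesized quantity is $\beta_0(1)\id_{M_3}$ after pairing with $\theta_1\in M_1$, applying Leibniz to $\theta_3\cdot X(\theta_1)\in M_5=0$ and (\ref{17.9.b}) to reduce $[X^\dagger m_3(\theta_3)\theta_1]_M$ to $\beta_0(1)[\theta_3\theta_1]_M-[\theta_3\alpha_1\beta_1(\theta_1)]_M$, then cancelling the second term against $[\alpha_3\beta_3(\theta_3)\theta_1]_M = [\beta_3(\theta_3)\wedge\beta_1(\theta_1)]_K = [\theta_3\alpha_1\beta_1(\theta_1)]_M$; the second parenthesized quantity equals $k_1(\sigma)\id_{M_3}$ by a single Leibniz expansion. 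Representative off-diagonals such as $\beta_2\circ X|_{M_{1,2}}=0$ (via (\ref{17.9.e})) and $\beta_3\circ X^\dagger=0$ (via (\ref{17.9.a})) follow by pairing into $M_4$ and invoking non-degeneracy.

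I expect the main obstacle to be the mixed-type off-diagonal $(M_3,M_{1,2})$ of $g_{\text{odd}}g_{\text{even}}$, which reduces to $-rX^\dagger w_1+rw_2X$ on $M_{1,2}$ after discarding $r^2X^\dagger X=0$ (via (\ref{17.9.d})) and $w_2w_1=0$ (via $\alpha_1(\sigma)^2=0$). The plan to force $X^\dagger w_1 = w_2X$ on $M_{1,2}$ is in two steps: (i) show $m_3\circ(X^\dagger w_1-w_2X)=0$ by applying (\ref{17.9.c}) to $w_1(\theta_1)\in M_2$ and matching the result with the direct Leibniz expansion of $m_3(X(\theta_1)\alpha_1(\sigma))$, both of which produce $\beta_0(1)\theta_1\alpha_1(\sigma)-\alpha_1\beta_1(\theta_1)\alpha_1(\sigma)+k_1(\sigma)X(\theta_1)$; (ii) show $\beta_3\circ(X^\dagger w_1-w_2X)=0$, using (\ref{17.9.a}) for the first composite and $\alpha_1(\sigma)\alpha_1(\phi_1)=\alpha_2(\sigma\wedge\phi_1)$ together with (\ref{17.9.e}) for the second. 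The discrepancy then lies in $\im m_4\cap\ker\beta_3=\im m_4\cap M_{3,2}$, and this intersection is zero: if $m_4(\theta_4)\in M_{3,2}$, then Leibniz applied to $m_5(\theta_4\cdot\theta_{1,1})=0$ for $\theta_{1,1}\in M_{1,1}$ forces $\mfK\cdot\theta_4=0$ in $M_4\cong P$, and Lemma~\ref{apr29-19} (whose role here is to guarantee a regular element inside $\mfK$) then forces $\theta_4=0$. The other mixed off-diagonals involving $Y$ and $W$ are dispatched by similar cancellations anchored in $\sigma\wedge\sigma=0$ and in the analogous chain-map identity $z_3\beta_3=\beta_4w_3$.

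Finally, M.F.~\ref{19.9.2} is obtained from M.F.~\ref{19.9.1}, under the assumption that $r$ is a unit, by splitting off a trivial matrix factorization supported on the summand $M_{3,1}\oplus K_3\oplus K_4\oplus M_4$. Using the isomorphism $\beta_3|_{M_{3,1}}\colon M_{3,1}\xrightarrow{\cong} K_3$ from (\ref{19.3.7}) and the isomorphism $\beta_4\colon M_4\xrightarrow{\cong} K_4$, together with the identities $\alpha_4\beta_4=\beta_0(1)\id_{M_4}$ and $\beta_3\alpha_3=\beta_0(1)\id_{K_3}$, one successively eliminates the rows and columns indexed by these four summands; the invertibility of $r$ is used at each elimination step to clear the scalar in $r\beta_3$ and $r\beta_4$. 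The Schur-complement correction $r^{-1}W$ appearing in $\acute{g}_{\text{even}}$ is precisely what emerges from the $K_3$-elimination, while $\proj_{M_{3,2}}\circ(rX^\dagger+w_2)$ in $\acute{g}_{\text{odd}}$ is the residual of the $M_3$-row once the $M_{3,1}$ piece has been absorbed.
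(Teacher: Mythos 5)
Your entry-by-entry verification of M.F.~\ref{19.9.1} follows essentially the same route as the paper's proof in \ref{proof17.9}, drawing on the same bank of auxiliary identities (Observations \ref{17.3}, \ref{17*.2}, \ref{17*.3}, \ref{17*.4}, and Lemmas \ref{17*.5}, \ref{17*.5'}, \ref{17.47}); in particular your treatment of the $(M_3,M_{1,2})$ block --- kill $m_3$ and $\beta_3$ of the discrepancy, then invoke $\ker m_3\cap\ker\beta_3=0$ --- is exactly the paper's Lemma~\ref{17*.5'}.(\ref{17*.5'.a}) built on Lemma~\ref{17*.5}. Two cautions on the plan paragraph. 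The blanket claim $\alpha_i\circ\beta_i=\beta_0(1)\cdot\id_{M_i}$ is false for $1\le i\le 3$; only the combinations $m_2\circ X+\alpha_1\circ\beta_1$, \ $X\circ m_2+m_3\circ X^\dagger+\alpha_2\circ\beta_2$, \ $X^\dagger\circ m_3+\alpha_3\circ\beta_3$ (Observation~\ref{17*.2}.(\ref{17*.2.d})), and the extreme cases $\alpha_0\circ\beta_0$, $\alpha_4\circ\beta_4$ equal $\beta_0(1)$ times the identity; your detailed $(M_3,M_3)$ computation does get this right, but the plan paragraph should not assert the bare identity. Also, Lemma~\ref{apr29-19} is not what you need for $\im m_4\cap M_{3,2}=0$: the first entry of any regular sequence generating $\mfK$ is already regular on $P$, and the paper's proof of Lemma~\ref{17*.5} avoids the issue altogether by writing $\beta_3\circ m_4=k_4\circ\beta_4$ with $k_4$ injective and $\beta_4$ an isomorphism.

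Where you genuinely diverge from the paper is M.F.~\ref{19.9.2}. The paper verifies $\acute{g}_{\text{even}}\acute{g}_{\text{odd}}=f\cdot\id$ by decomposing the product as $rA+B+r^{-1}C$ and identifying each piece, and verifies $\acute{g}_{\text{odd}}\acute{g}_{\text{even}}=f\cdot\id$ entry by entry, again from the same lemma bank. You instead derive M.F.~\ref{19.9.2} from M.F.~\ref{19.9.1} by splitting off a trivial matrix factorization supported on $M_{3,1}\p K_4$ (even) and $K_3\p M_4$ (odd), using the two invertible blocks $r\beta_3|_{M_{3,1}}$ in $g_{\text{even}}$ and $r\beta_4$ in $g_{\text{odd}}$. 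This matches the paper's own remark immediately preceding Theorem~\ref{17.9}, which states that M.F.~\ref{19.9.2} is obtained from M.F.~\ref{19.9.1} by splitting off a trivial factorization, but the paper never carries out the elimination. Your route is sound, and your identification of $r^{-1}m_3|_{M_{3,1}}\circ(\beta_3|_{M_{3,1}})^{-1}\circ z_2=r^{-1}W$ as the Schur complement appearing in the $(M_2,K_2)$ entry agrees with the $\alpha_2+r^{-1}W$ of the statement. What it buys is a shorter proof of M.F.~\ref{19.9.2} once M.F.~\ref{19.9.1} is established, at the cost of tracking two interleaved eliminations carefully (note that $\alpha_4$ is not invertible in general, being $\beta_0(1)$ times $\beta_4^{-1}$, so the $K_4,M_4$ elimination must indeed pivot on $r\beta_4$ in $g_{\text{odd}}$ rather than on a block of $g_{\text{even}}$). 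The paper's direct route is more self-contained and produces the entries of $\acute{g}_{\text{even}},\acute{g}_{\text{odd}}$ without any intervening change of basis.
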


\begin{remarks}

$ $

\begin{enumerate}[\rm(a)]

\item The proof of Theorem \ref{17.9} is given in \ref{proof17.9}. First we make numerous preliminary calculations involving the maps of Data~\ref{data17}, Definition~\ref{17.42}, and $X$ itself. 

\item It should be noted that $G_{\text{\rm even}}$ and $G_{\text{\rm odd}}$ have the same rank. Indeed, $K_2$, $K_3$, $K_4$, and $M_4$ have rank $6$, $4$, $1$, and $1$, respectively; $M_1$ and $M_3$ have the same rank; $\rank M_2=2\rank M_1-2$; and $\rank M_{1,2}=\rank M_1-4$. Similarly, 
\begin{align*}\rank \acute{G}_{\text{\rm even}}={}&6+2(\rank M_1-4)=2\rank M_1-2=\rank M_2\\{}={}&\rank\acute{G}_{\text{\rm odd}}.\end{align*}
\end{enumerate}
\end{remarks}
\section{Preliminary calculations.}\label{PC}

In this section we prove many formulas involving the data of \ref{data17} and  \ref{17.42}. 
These formulas 
are used in the  the proof of the existence of $X$ and in the proof of Theorem~\ref{17.9}. There are many of these formulas; but each proof is straightforward. The hard work is involved in the proof of Theorem~\ref{17.11}, where we establish the existence of $X$.

\begin{chunk}We often use the graded product rule on $M$ and $K$ in the following form. If $\theta_j\in M_j$ and $\phi_j\in K_j$
then $0=m_5(\theta_i\cdot \theta_{5-i})$, $0=k_5(\phi_i\w \phi_{5-i})$; and therefore,
\begin{equation}\label{17.5}
\begin{array}{rcl}
0&{}={}&m_i(\theta_i)\cdot \theta_{5-i}+(-1)^i\theta_i\cdot m_{5-i}(\theta_{5-i})\text{ and }\\[5pt] 
0&{}={}&k_i(\phi_i)\w \phi_{5-i}+(-1)^i\phi_i\w k_{5-i}(\phi_{5-i}).\end{array}\end{equation}
\end{chunk}
 \begin{observation}\label{17.41} In the language of {\rm\ref{17.42}}, the maps  $\beta_i$ form a map of complexes.
\end{observation}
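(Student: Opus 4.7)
The claim is that $k_i\circ\beta_i=\beta_{i-1}\circ m_i\colon M_i\to K_{i-1}$ for $1\le i\le 4$. The plan is to test this equation against the Poincar\'e duality pairing of $K$: since $K_{i-1}\otimes_P K_{5-i}\to K_4$ is a perfect pairing, it suffices to verify that
$$[k_i(\beta_i(\theta_i))\wedge \phi_{5-i}]_K=[\beta_{i-1}(m_i(\theta_i))\wedge\phi_{5-i}]_K$$
for every $\theta_i\in M_i$ and every $\phi_{5-i}\in K_{5-i}$. I then intend to massage each side into the other by a short chain of rewrites that move the differential back and forth between the two algebras.

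The key steps, in order, are the following. First, apply the graded Leibniz rule (\ref{17.5}) in $K$ to $\beta_i(\theta_i)\wedge \phi_{5-i}$; because $K_5=0$, this gives
$$k_i(\beta_i(\theta_i))\wedge\phi_{5-i}=(-1)^{i+1}\,\beta_i(\theta_i)\wedge k_{5-i}(\phi_{5-i}).$$
Next, apply the definition (\ref{17.2}) of $\beta_i$ to $\phi_{4-i}=k_{5-i}(\phi_{5-i})$:
$$[\beta_i(\theta_i)\wedge k_{5-i}(\phi_{5-i})]_K=[\theta_i\cdot \a_{4-i}(k_{5-i}(\phi_{5-i}))]_M.$$
Then use that $\a\colon K\to M$ is a map of $\DG$-algebras, hence in particular a chain map, to replace $\a_{4-i}\circ k_{5-i}$ by $m_{5-i}\circ\a_{5-i}$; and apply the graded Leibniz rule (\ref{17.5}) in $M$ to $\theta_i\cdot\a_{5-i}(\phi_{5-i})$, observing that $M_5=0$, to obtain
$$\theta_i\cdot m_{5-i}(\a_{5-i}(\phi_{5-i}))=(-1)^{i+1}\,m_i(\theta_i)\cdot\a_{5-i}(\phi_{5-i}).$$
Finally, apply the definition (\ref{17.2}) of $\beta_{i-1}$ to $\theta_{i-1}=m_i(\theta_i)$ and $\phi_{4-(i-1)}=\phi_{5-i}$ to rewrite the resulting element of $M_4$ as $[\beta_{i-1}(m_i(\theta_i))\wedge \phi_{5-i}]_K$.

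Assembling these identities, the two factors of $(-1)^{i+1}$ multiply to $1$, so the signs cancel and the desired equality of pairings is established. Poincar\'e duality for $K$ then upgrades this equality of pairings to the equality of elements $k_i\circ\beta_i=\beta_{i-1}\circ m_i$ in $\Hom_P(M_i,K_{i-1})$. There is no real obstacle here; the only thing that requires care is sign bookkeeping, and the two Leibniz applications (one in $K$, one in $M$) are arranged precisely so that the signs cancel.
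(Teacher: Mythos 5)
Your proof is correct and uses essentially the same argument as the paper: both reduce to an equality of orientation pairings and then apply the same five identities (the defining formula (\ref{17.2}) for $\beta$ twice, the graded Leibniz rule (\ref{17.5}) once in $K$ and once in $M$, and the fact that $\a$ is a chain map). In fact your chain of equalities is literally the paper's chain read in the opposite direction — you start from $k_i\circ\beta_i$ and arrive at $\beta_{i-1}\circ m_i$, while the paper starts from $\beta_i\circ m_{i+1}$ and arrives at $k_{i+1}\circ\beta_{i+1}$ — so the sign bookkeeping works out the same way.
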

\begin{proof} It suffices to show that
$$\beta_i\circ m_{i+1}=k_{i+1}\circ \beta_{i+1}.$$
It suffices to show that
$$[(\beta_i\circ m_{i+1})(\theta_{i+1})\w \phi_{4-i}]_K=[(k_{i+1}\circ \beta_{i+1})(\theta_{i+1})\w \phi_{4-i}]_K.$$
We compute \begingroup\allowdisplaybreaks
\begin{align*}
&[(\beta_i\circ m_{i+1})(\theta_{i+1})\w \phi_{4-i}]_K\\
{}={}&[m_{i+1}(\theta_{i+1})\cdot \a_{4-i}(\phi_{4-i})]_M,&&\text{by (\ref{17.2}),}\\
{}={}&(-1)^i[\theta_{i+1}\cdot (m_{4-i}\circ\a_{4-i})(\phi_{4-i})]_M,&&
\text{by (\ref{17.5}),}\\
{}={}&(-1)^i[\theta_{i+1}\cdot (\a_{3-i}\circ k_{4-i})(\phi_{4-i})]_M,&&\text{since $\a$ is a map of complexes,}\\
{}={}&(-1)^i[(\beta_{i+1}\theta_{i+1})\wedge  k_{4-i}(\phi_{4-i})]_K,&&\text{by (\ref{17.2}),}\\
{}={}&[(k_{i+1}\circ\beta_{i+1})(\theta_{i+1})\wedge  \phi_{4-i}]_K,&&\text{by (\ref{17.5}).}
\end{align*}\endgroup
\vskip-24pt\end{proof}

It is convenient to combine the maps of complexes $\a$ and $\beta$ into the following commutative diagram:
\begin{equation}\label{17.6}
\xymatrix{
0\ar[r]& 
K_4\ar[r]^{k_4}\ar[d]^{\a_4}&
 K_3\ar[r]^{k_3}\ar[d]^{\a_3}& 
K_2\ar[r]^{k_2}\ar
[d]^{\a_2}& K_1   \ar[r]^{k_1}\ar[d]^{\a_1}& K_0\ar[d]^{\a_0}_{=}
\\
0\ar[r]& M_4\ar[r]^{m_4}\ar[d]^{\beta_4}& M_3\ar[r]^{m_3}\ar[d]^{\beta_3}& M_2\ar[r]
^{m_2}\ar[d]^{\beta_2}& M_1   \ar[r]^{m_1}\ar[d]^{\beta_1}& M_0\ar[d]^{\beta_0}
\\
0\ar[r]& K_4\ar[r]^{k_4}\ar[d]^{\a_4}& K_3\ar[r]^{k_3}\ar[d]^{\a_3}& K_2\ar[r]^{k_2}\ar[d]^{\a_2}& K_1   \ar[r]^{k_1}\ar[d]^{\a_1}& K_0\ar[d]^{\a_0}_{=}\\
0\ar[r]& M_4\ar[r]^{m_4}& M_3\ar[r]^{m_3}& M_2\ar[r]
^{m_2}& M_1   \ar[r]^{m_1}& M_0.
}\end{equation}

\begin{observation}
\label{17.3}Adopt the language of {\rm\ref{17.42}}. The following formulas hold for $\theta_\ell$ in $M_\ell$ and $\phi_\ell$ in $K_\ell${\rm:}
\begin{enumerate}[\rm(a)]
\item\label{17.3.a}$\beta_i\circ \a_i=\beta_0(1)\cdot \id_{K_i}$\,, for $0\le i\le 4$,
\item\label{17.3.b} $\theta_i\cdot (\a_{4-i}\circ \beta_{4-i})(\theta_{4-i})=(\a_i\circ \beta_i)(\theta_i)\cdot \theta_{4-i}$\,, for $0\le i\le 4$, 
\item\label{17.3.c} $\beta_i(\theta_j\cdot \a_{i-j}(\phi_{i-j}))=\beta_j(\theta_j)\w \phi_{i-j}$\,, for $0\le j\le i\le 4$,

\item\label{18.D} $\beta_3|_{M_{3,2}}=0$, 

\item\label{18.I}$(\beta_3|_{M_{3,1}})^{-1}\circ \beta_3=\proj_{M_{3,1}}$, and

\item \label{18.N}$w_3\circ (\beta_3|_{M_{3,1}})^{-1}\circ z_2=0$.
\end{enumerate}
\end{observation}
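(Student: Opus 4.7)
The plan is to verify the six identities in sequence, grouping (a)--(c) together since they all reduce, via the defining formula (\ref{17.2}), to identities that become transparent after pairing with an arbitrary element and invoking Poincar\'e duality on $K$ or $M$. The remaining identities (d)--(f) will exploit the direct sum decompositions of $M_1$ and $M_3$ from \ref{17.7} together with the fact that $\sigma\w\sigma=0$ in $K$.

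For (a), I would apply (\ref{17.2}) to $[\beta_i(\a_i(\phi_i))\w\phi_{4-i}]_K$, then use that $\a$ is a map of $\DGGamma$-algebras to rewrite $\a_i(\phi_i)\cdot\a_{4-i}(\phi_{4-i})$ as $\a_4(\phi_i\w\phi_{4-i})$. The $i=0$ case of (\ref{17.2}) identifies $[\a_4(-)]_M$ with $\beta_0(1)\cdot [-]_K$, and perfect pairing in $K$ promotes the resulting identity of pairings to equality in $K_i$. For (b), I would compute $[-]_M$ of both sides. The left-hand side becomes $[\beta_i(\theta_i)\w\beta_{4-i}(\theta_{4-i})]_K$ directly from (\ref{17.2}); the right-hand side, after using graded commutativity in $M$ to move $\a_i(\beta_i(\theta_i))$ past $\theta_{4-i}$, (\ref{17.2}) again, and graded commutativity in $K$, produces the same wedge with total sign $(-1)^{i(4-i)}\cdot (-1)^{i(4-i)}=1$. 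For (c), pair with $\phi_{4-i}\in K_{4-i}$, apply (\ref{17.2}) on the left, associate the resulting triple product in $M$, use the $\DGGamma$-algebra property to combine $\a_{i-j}(\phi_{i-j})\cdot \a_{4-i}(\phi_{4-i})$ into $\a_{4-j}(\phi_{i-j}\w\phi_{4-i})$, and finish with a second application of (\ref{17.2}).

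Identities (d) and (e) are immediate consequences of the decomposition of $M_3$. For (d), pair with $\phi_1\in K_1$: formula (\ref{17.2}) gives $[\beta_3(\theta_{3,2})\w\phi_1]_K=[\theta_{3,2}\cdot \a_1(\phi_1)]_M$, which vanishes because (\ref{17.46}) places $\a_1(\phi_1)$ in $M_{1,1}$ and the definition of $M_{3,2}$ forces $\theta_{3,2}\cdot M_{1,1}=0$; Poincar\'e duality in $K$ then yields $\beta_3(\theta_{3,2})=0$. Then (e) follows by writing $\theta_3=\theta_{3,1}+\theta_{3,2}$ and applying (d).

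For (f), the main input is $\sigma\w\sigma=0$ in $K$, which is axiom \ref{2.4}(d) applied to the odd-degree element $\sigma\in K_1$. Setting $\theta_{3,1}=(\beta_3|_{M_{3,1}})^{-1}(z_2(\phi_2))$, so that $\beta_3(\theta_{3,1})=\phi_2\w\sigma$, I would apply part (c) with $j=3$ and $i=4$ to obtain $\beta_4(\theta_{3,1}\cdot \a_1(\sigma))=\beta_3(\theta_{3,1})\w\sigma=\phi_2\w\sigma\w\sigma=0$, and the isomorphism $\beta_4$ from \ref{19.3.7} forces $w_3(\theta_{3,1})=\theta_{3,1}\cdot \a_1(\sigma)=0$. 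I do not anticipate a genuine obstacle in this observation; the mildest nuisance is keeping the signs straight in (b), where the two graded-commutativity swaps contribute cancelling factors.
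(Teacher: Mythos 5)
Your proposal is correct and follows the paper's proof essentially step for step: (a)--(c) via (\ref{17.2}) and the algebra-map property of $\a$, (d) via the definition of $M_{3,2}$, (e) from (d) and the decomposition of $M_3$, and (f) from $\sigma\w\sigma=0$. The only cosmetic difference is in (f), where you invoke part (c) and the isomorphism $\beta_4$, whereas the paper applies (\ref{17.2}) directly and uses that $[-]_M$ is an isomorphism; these are equivalent by \ref{19.3.7}.
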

\begin{proof} $ $

\noindent(\ref{17.3.a}) It suffices to show that $$[(\beta_i\circ \a_i)(\phi_i)\w \phi_{4-i}]_K=
[\beta_0(1)\cdot\phi_i\w \phi_{4-i}]_K,$$for all $\phi_i\in  K_i$ and $\phi_{4-i}\in  K_{4-i}$. Observe that 

\begingroup\allowdisplaybreaks
\begin{align*} &[(\beta_i\circ \a_i)(\phi_i)\w \phi_{4-i}]_K\\{}={}&[\a_i(\phi_i)\cdot \a_{4-i}(\phi_{4-i})]_M,&&\text{by \rm(\ref{17.2}),}\\
{}={}&[\a_{4}(\phi_i\w \phi_{4-i})]_M,&&\text{because $\a$ is an algebra map,}\\
{}={}&[1\cdot \a_{4}(\phi_i\w \phi_{4-i})]_M\\
{}={}&[\beta_0(1)\cdot\phi_i\w \phi_{4-i}]_K,&&\text{by \rm(\ref{17.2}).}\end{align*}
\endgroup

\ms \noindent (\ref{17.3.b}) Apply (\ref{17.2}) and graded commutativity multiple times:
\begin{align*}[\theta_i\cdot (\a_{4-i}\circ \beta_{4-i})(\theta_{4-i})]_M
{}={}&[\beta_i(\theta_i)\w \beta_{4-i}(\theta_{4-i})]_K\\
{}={}&(-1)^i[\beta_{4-i}(\theta_{4-i})\w \beta_i(\theta_i)]_K\\
{}={}&(-1)^i[\theta_{4-i}\cdot (\a_{i}\circ\beta_{i})(\theta_i)]_M\\
{}={}&[(\a_{i}\circ\beta_{i})(\theta_i)\cdot  \theta_{4-i}]_M.\end{align*}

\ms \noindent (\ref{17.3.c}) Observe that
\begin{align*} 
&[\beta_i(\theta_j\cdot \a_{i-j}(\phi_{i-j}))\w \phi_{4-i}]_K\\
{}={}&[\theta_j\cdot \a_{i-j}(\phi_{i-j})\cdot \a_{4-i}(\phi_{4-i})]_M,&&\text{by (\ref{17.2})},\\
{}={}&[\theta_j\cdot \a_{4-j}(\phi_{i-j}\w\phi_{4-i})]_M,&&\text{because $\a$ is an algebra map,}\\
{}={}&[\beta_j(\theta_j)\w \phi_{i-j}\w\phi_{4-i}]_K,&&\text{by (\ref{17.2})}.
\end{align*}
Multiplication is associative  in both $K$ and $M$.

\ms\noindent (\ref{18.D}) If $\theta_{3,2}$ is an element of $M_{3,2}$, then 
$$[\beta_3(\theta_{3,2})\w \phi_1]_K=[\theta_{3,2}\cdot \a_1(\phi_1)]_M=0,$$ for all $\phi_1\in K_1$ by the definition of $M_{3,2}$ (see \ref{17.7})); hence $\beta_3(\theta_{3,2})=0$.

\ms \noindent (\ref{18.I})
Observe that
\begin{align*}(\beta_3|_{M_{3,1}})^{-1}\circ \beta_3={}&
(\beta_3|_{M_{3,1}})^{-1}\circ \beta_3\circ(\proj_{M_{3,1}}+\proj_{M_{3,2}})\\
={}&(\beta_3|_{M_{3,1}})^{-1}\circ (\beta_3|_{M_{3,1}}\circ\proj_{M_{3,1}}+\beta_3|_{M_{3,2}}
\circ\proj_{M_{3,2}}).\intertext{Recall from (\ref{18.D}) that $\beta_3|_{M_{3,2}}=0$. Conclude that}(\beta_3|_{M_{3,1}})^{-1}\circ \beta_3={}&(\beta_3|_{M_{3,1}})^{-1}\circ (\beta_3|_{M_{3,1}}\circ\proj_{M_{3,1}})=\proj_{M_{3,1}}.\end{align*}

\ms \noindent (\ref{18.N})
If $\phi_2\in K_2$, then
\begin{align*}
[\big(w_3\circ (\beta_3|_{M_{3,1}})^{-1}\circ z_2\big)(\phi_2)]_M={}&
[(\beta_3|_{M_{3,1}})^{-1}(\phi_2\w \sigma) \cdot \a_1(\sigma)]_M&&\text{by \ref{17.8'}}\\
{}={}&
[\beta_3\big((\beta_3|_{M_{3,1}})^{-1}(\phi_2\w \sigma)\big) \w \sigma]_K&&\text{by \ref{17.2}}\\
{}={}&
[\phi_2\w \sigma\w \sigma]_K=0.
\end{align*}

\end{proof}

\begin{lemma}\label{17*.5}In the language of {\rm\ref{17.42}}, 
 $\ker m_3\cap \ker \beta_3=0$. \end{lemma}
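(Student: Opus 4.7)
The plan is first to identify $\ker \beta_3$ with the summand $M_{3,2}$, and then to exploit the Leibniz rule on $M$ together with the fact that $\mfK$ contains a nonzerodivisor to show that $m_3|_{M_{3,2}}$ is already injective.

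For the first step, Observation~\ref{17.3}.(\ref{18.D}) gives $M_{3,2}\subseteq \ker\beta_3$. Conversely, by Observation~\ref{17.3}.(\ref{18.I}) the composition $(\beta_3|_{M_{3,1}})^{-1}\circ \beta_3$ equals $\proj_{M_{3,1}}$, so any $\theta_3\in \ker\beta_3$ satisfies $\proj_{M_{3,1}}(\theta_3)=0$, hence lies in $M_{3,2}$. Thus $\ker\beta_3=M_{3,2}$, and the lemma reduces to showing $M_{3,2}\cap \ker m_3=0$.

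So take $\theta_3\in M_{3,2}$ with $m_3(\theta_3)=0$. Since $M$ is a resolution, $\ker m_3=\im m_4$, and we may write $\theta_3=m_4(\theta_4)$ for some $\theta_4\in M_4$. For any $\theta_1\in M_1$, the graded product rule (\ref{17.5}) applied to $\theta_4\cdot \theta_1\in M_5=0$ yields
$$0=m_4(\theta_4)\cdot \theta_1+(-1)^4\,\theta_4\cdot m_1(\theta_1),$$
so $m_4(\theta_4)\cdot \theta_1=-m_1(\theta_1)\,\theta_4$ in $M_4$. Restricting to $\theta_1\in M_{1,1}$, the left-hand side vanishes by the defining property of $M_{3,2}$ in \ref{17.7}; invoking \ref{data17}.(\ref{data17.b}) that $m_1(M_{1,1})=\mfK$, we conclude that $\mfK\cdot \theta_4=0$ in $M_4$.

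Finally, Lemma~\ref{apr29-19} produces a regular element $a\in\mfK$ (this is the only role of the hypothesis that $\mfK$ is generated by a regular sequence in the present lemma). Since $M_4\cong P$ is a free $P$-module of rank one, the equation $a\theta_4=0$ forces $\theta_4=0$, and hence $\theta_3=m_4(\theta_4)=0$. The only potential pitfall is bookkeeping of the decomposition $M_3=M_{3,1}\oplus M_{3,2}$ and the sign in the Leibniz rule, both of which are routine; the argument has no real obstacle.
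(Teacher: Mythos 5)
Your proof is correct, but it takes a genuinely different route from the paper's. The paper's argument is shorter: it lets $\theta_3 = m_4(\theta_4)$, uses the commutativity $\beta_3\circ m_4 = k_4\circ\beta_4$ from diagram (\ref{17.6}) to deduce $k_4(\beta_4(\theta_4))=0$, then invokes the injectivity of $k_4$ and the fact (\ref{19.3.7}) that $\beta_4$ is an isomorphism to conclude $\theta_4=0$. It never touches the decomposition $M_3 = M_{3,1}\oplus M_{3,2}$ or the multiplicative structure. You instead first identify $\ker\beta_3$ with $M_{3,2}$ via Observation~\ref{17.3}, then push the Leibniz rule on $\theta_4\cdot\theta_1$ together with $m_1(M_{1,1})=\mfK$ to get $\mfK\cdot\theta_4=0$, and finish by citing a regular element of $\mfK$. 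Both arguments are sound; yours trades the ``$\beta_4$ is an isomorphism'' observation for more explicit use of Poincar\'e duality and the DG structure, and is somewhat longer. One small inefficiency: you invoke Lemma~\ref{apr29-19} merely to produce a single regular element of $\mfK$, but that lemma is a much stronger statement about choosing a regular generating sequence all of whose members are regular on $P$; the trivial fact that an ideal of grade four has positive grade, hence contains a nonzerodivisor, is all you need at that point.
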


\begin{proof}  Let $\theta_3$ be an element of $\ker m_3\cap \ker \beta_3$.
The complex $M$ is a resolution; so $\theta_3=m_4(\theta_4)$ for some $\theta_4\in M_4$. Apply (\ref{17.6}) to see that $$0=\beta_3(\theta_3)=(\beta_3\circ m_4)(\theta_4)=k_4\circ \beta_4(\theta_4).$$ The map $k_4$ is an injection; consequently, $\beta_4(\theta_4)=0$. On the other hand,
$$0=[\beta_4(\theta_4)]_K=[\beta_4(\theta_4)\w 1]_K=[\theta_4\cdot \a_0(1)]_M=[\theta_4]_M.$$ Thus, $\theta_4=0$ and $\theta_3=m_4(\theta_4)$ is also zero. \end{proof}

\begin{observation}\label{17*.4}The maps and modules 
$$\xymatrix{
0\ar[r]&K_0\ar[r]^{z_0}\ar[d]^{\a_0}&K_1\ar[r]^{z_1}\ar[d]^{\a_1}&K_2\ar[r]^{z_2}\ar[d]^{\a_2}&K_3\ar[r]^{z_3}\ar[d]^{\a_3}&K_4\ar[d]^{\a_4}\\
0\ar[r]&M_0\ar[r]^{w_0}\ar[d]^{\beta_0}&M_1\ar[r]^{w_1}\ar[d]^{\beta_1}&M_2\ar[r]^{w_2}\ar[d]^{\beta_2}&M_3\ar[r]^{w_3}\ar[d]^{\beta_3}&M_4\ar[d]^{\beta_4}\\
0\ar[r]&K_0\ar[r]^{z_0}&K_1\ar[r]^{z_1}&K_2\ar[r]^{z_2}&K_3\ar[r]^{z_3}&K_4
}$$ form  maps of complexes.
\end{observation}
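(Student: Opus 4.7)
The plan is to verify two kinds of statements: (i) each of the three horizontal rows is itself a complex, and (ii) each square in the diagram commutes. Both reduce to short calculations using that $\sigma \in K_1$ is an odd-degree element, that $\alpha\colon K\to M$ is a map of DG-algebras, and Observation~\ref{17.3}.(\ref{17.3.c}).

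For the ``rows are complexes'' step, I would argue as follows. The top and bottom rows both use $z_i(\phi_i) = \phi_i\wedge\sigma$; then $z_{i+1}(z_i(\phi_i)) = \phi_i\wedge\sigma\wedge\sigma = 0$ because $\sigma$ has odd degree in the DG$\Gamma$-algebra $K$. For the middle row, $w_{i+1}(w_i(\theta_i)) = \theta_i\cdot\alpha_1(\sigma)\cdot\alpha_1(\sigma)$; since $\alpha$ is an algebra map, $\alpha_1(\sigma)\cdot\alpha_1(\sigma) = \alpha_2(\sigma\wedge\sigma) = 0$, and so this vanishes.

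For the commuting squares involving $\alpha$, I would compute
\[
\alpha_{i+1}(z_i(\phi_i)) = \alpha_{i+1}(\phi_i\wedge\sigma) = \alpha_i(\phi_i)\cdot\alpha_1(\sigma) = w_i(\alpha_i(\phi_i)),
\]
using only that $\alpha$ is a map of graded algebras. For the squares involving $\beta$, the key is to apply Observation~\ref{17.3}.(\ref{17.3.c}) with $j=i$: for any $\theta_i\in M_i$,
\[
\beta_{i+1}(w_i(\theta_i)) = \beta_{i+1}\bigl(\theta_i\cdot\alpha_1(\sigma)\bigr) = \beta_i(\theta_i)\wedge\sigma = z_i(\beta_i(\theta_i)).
\]
That completes the verification of all eight squares.

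There is no real obstacle in this argument; the entire content sits in the identity of Observation~\ref{17.3}.(\ref{17.3.c}) and in the fact that $\alpha$ respects products. If anything, the only mild subtlety is keeping straight that the horizontal arrows raise degree rather than lower it, so the ``complex'' condition one checks is $z_{i+1}\circ z_i=0$ (and likewise for $w$), not the usual boundary identity. Everything else is direct substitution into the definitions recorded in \ref{17.8'} and \ref{17.42}.
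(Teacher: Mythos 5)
Your proposal is correct and takes essentially the same approach as the paper; the only cosmetic difference is that for the $\beta$-squares you invoke Observation~\ref{17.3}.(\ref{17.3.c}) (with $i\mapsto i+1$, $j\mapsto i$, $\phi_1=\sigma$), whereas the paper re-runs the identical orientation-bracket computation inline from (\ref{17.2}). Both verifications of the rows and of the $\alpha$-squares match the paper's argument.
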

\begin{proof} The maps $z_i$ and $w_i$ are defined in (\ref{17.8'}). Elements of degree
 $1$ in a DG-algebra square to zero. It follows that $z_i\circ z_{i-1}=0$ and $w_i\circ w_{i-1}=0$.
To see that $\a$ is a map of complexes, we observe that 
\begin{align*}(\a_{i+1}\circ z_i)(\phi_i){}={}&\a_{i+1}(\phi_i\w \sigma),&&\text{by the definition of $z$,}\\{}={}&\a_{i}(\phi_i)\cdot \a_1(\sigma),&&\text{because $\a$ is an algebra map,}\\{}={}&(w_i\circ \a_i)(\phi_i),&&\text{by the definition of $w$.}\end{align*}
To see that $\beta$ is a map of complexes, we observe that 
\begin{align*}&[(z_i\circ \beta_i)(\theta_i)\w \phi_{3-i}]_K\\{}={}&
[\beta_i(\theta_i)\w \sigma \w \phi_{3-i}]_K,&&\text{by the definition of $z$,}
\\{}={}&[\theta_i\cdot \a_{4-i}(\sigma \w \phi_{3-i})]_M,&&\text{by (\ref{17.2}),}
\\{}={}&[(\theta_i\cdot \a_1(\sigma)) \cdot\a_{3-i}(\phi_{3-i})]_M,&&\text{because $\a$ is an algebra map,}
\\{}={}&[w_i(\theta_i) \cdot\a_{3-i}(\phi_{3-i})]_M,&&\text{by the definition of $w$,}
\\{}={}&[(\beta_{i+1}\circ w_i)(\theta_i) \w \phi_{3-i}]_K,&&\text{by (\ref{17.2}).}\end{align*}Of course, we used that the multiplication on $M$ is associative.
\end{proof}

\begin{observation}\label{17*.3} The maps $z_i$ and $w_i$ of Definition~{\rm\ref{17.8'}} satisfy the following formulas{\rm:}
\begin{enumerate}[\rm(a)]\item\label{17*.3.a} 
$z_{i-1}\circ k_i-k_{i+1}\circ 
z_i=(-1)^{i+1}k_1(\sigma)\cdot \id_{K_i}$\,, and

\item\label{17*.3.b} $w_{i-1}\circ m_i-m_{i+1}\circ w_i=(-1)^{i+1}k_1(\sigma)\cdot \id_{M_i}$.
\end{enumerate}
\end{observation}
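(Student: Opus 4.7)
Both identities should follow almost immediately from a single ingredient: the graded Leibniz rule \ref{2.4}.(\ref{2.4.b}), applied in $K$ to the product $\phi_i\w \sigma$ and in $M$ to the product $\theta_i\cdot \a_1(\sigma)$. The only additional input needed is that $k_1(\sigma)\in K_0=P$ is a scalar, so it commutes past everything, and (for (b)) the fact that $\a$ is a map of complexes with $\a_0=\id$, which gives $m_1\circ \a_1=k_1$.

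For (a), I would unwind the definitions from \ref{17.8'}:
\[z_{i-1}(k_i(\phi_i))=k_i(\phi_i)\w \sigma, \qquad k_{i+1}(z_i(\phi_i))=k_{i+1}(\phi_i\w \sigma).\]
The graded Leibniz rule for $K$ expands the second term as
\[k_{i+1}(\phi_i\w \sigma)=k_i(\phi_i)\w\sigma+(-1)^i\,\phi_i\w k_1(\sigma).\]
Subtracting, the $k_i(\phi_i)\w\sigma$ terms cancel, leaving $-(-1)^i\phi_i\w k_1(\sigma)=(-1)^{i+1}k_1(\sigma)\,\phi_i$, as desired.

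For (b), the computation is parallel: $w_{i-1}(m_i(\theta_i))=m_i(\theta_i)\cdot \a_1(\sigma)$, while the graded Leibniz rule on $M$ gives
\[m_{i+1}(\theta_i\cdot \a_1(\sigma))=m_i(\theta_i)\cdot \a_1(\sigma)+(-1)^i\,\theta_i\cdot m_1(\a_1(\sigma)).\]
Since $\a$ is a chain map and $\a_0=\id$, one has $m_1\circ\a_1=k_1$, so $m_1(\a_1(\sigma))=k_1(\sigma)$. Subtracting yields $(-1)^{i+1}k_1(\sigma)\,\theta_i$.

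There is essentially no obstacle here; the only thing to double-check is sign bookkeeping and the identification $m_1\circ\a_1=k_1$, but both are built into the setup. The observation is really just the statement that multiplication by the degree-$1$ element $\sigma$ (resp.\ $\a_1(\sigma)$) is a chain homotopy between the identity on $K$ (resp.\ $M$) and zero, rescaled by the boundary $k_1(\sigma)$.
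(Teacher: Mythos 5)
Your proof is correct and follows essentially the same route as the paper: unwind the definitions of $z_i$ and $w_i$, apply the graded Leibniz rule \ref{2.4}.(\ref{2.4.b}), and for (b) use $m_1\circ\a_1=k_1$ from the commutativity of (\ref{17.6}). The paper's version is simply more compressed, combining the two Leibniz steps into single displayed equalities.
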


\begin{proof} One uses the definition of $z$ and $w$, the graded product rule, and the commutativity of (\ref{17.6}). (\ref{17*.3.a}) If $\phi_i\in K_i$, then 
\begin{align*}(z_{i-1}\circ k_i-k_{i+1}\circ z_i)(\phi_i)
= {}&k_i(\phi_i)\w \sigma -k_{i+1}(\phi_i\w \sigma )=(-1)^{i+1}k_1(\sigma)\cdot \phi_i
.\end{align*}

\ms \noindent (\ref{17*.3.b}) If $\theta_i\in M_i$, then  \begin{align*}(w_{i-1}\circ m_i-m_{i+1}\circ w_i)(\theta_i)
&{}=m_i(\theta_i)\cdot \a_1(\sigma) -m_{i+1}(\theta_i\cdot \a_1(\sigma) )\\&{}=(-1)^{i+1}k_1(\sigma)\cdot \theta_i.\end{align*}
\end{proof}

\section{There exists a homomorphism $X$ which satisfies {\rm\ref{17.9}.(\ref{17.9.b})} and {\rm\ref{17.9}.(\ref{17.9.c})}.}\label{5}

Retain the data of \ref{data17} and  \ref{17.42}. In this section we produce a formal complex $B$ which automatically has a partial multiplicative structure.  We also produce  a null homotopic map of complexes $c:B\to K$. 
 Our first approximation of the map $X:M_1\to M_2$ is manufactured from this homotopy. This version of $X$ satisfies {\rm\ref{17.9}.(\ref{17.9.b})} and {\rm\ref{17.9}.(\ref{17.9.c})}.

Our inspiration for using $B$ comes from the proof of \cite[Prop.~1.1]{BE77} and from \cite[Sect.~2]{T57}. A complex similar to $B$ plays a crucial role in 
 \cite[Lem.~3.2]{Ku-DG}. An earlier version of the present paper was able to prove the existence of $X$ only for rings in which $2$ is a unit. The present version of the paper relies on the divided powers in $B$, $M$ and $K$ to avoid that hypothesis.

\begin{definition}
\label{5-21.1}
Adopt the notation of {\rm \ref{data17}} and  {\rm\ref{17.42}}.
\begin{enumerate}[\rm(a)]\item Define
$$B:\quad 0\to B_4\xrightarrow{b_4} B_3\xrightarrow{b_3} B_2\xrightarrow{b_2} B_1\xrightarrow{b_1}B_0$$ to be the modules
\begin{align*}
B_4&=D_2M_2, &&& B_3&=M_1\t M_2, &&\ts B_2=\bw^2M_1\p M_2,\\
B_1&=M_1, &&& B_0&=M_0,
\end{align*} and the maps
\begin{align}
b_4(\theta_2^{(2)})&{}= m_2(\theta_2)\t \theta_2\label{5-21.1a}\\
b_3(\theta_1\t \theta_2)&{}= \bmatrix -\theta_1\w m_2(\theta_2)\\m_1(\theta_1)\cdot \theta_2\endbmatrix\notag\\
b_2\left(\bmatrix \theta_1\w\theta_1'\\\theta_2\endbmatrix \right)&{}=m_1(\theta_1)\cdot \theta_1'-m_1(\theta_1')\cdot \theta_1+m_2(\theta_2), \text{ and}\notag\\
b_1&={}m_1.\notag\end{align}
\item Define $c_i:B_i\to K_i$ by
 \begin{align}c_4(\theta_2^{(2)})&{}=\beta_0(1)\cdot \beta_4(\theta_2^{(2)})-  \Big(\beta_2(\theta_2)\Big)^{(2)},\label{4.1.2}\\
c_3(\theta_1\t \theta_2)&{}=
\beta_0(1)\cdot \beta_3(\theta_1\cdot \theta_2)-\beta_1(\theta_1)\w \beta_2(\theta_2),\notag\\ 
c_2\left(\bmatrix \theta_1\w \theta_1'\\\theta_2\endbmatrix\right)&{} =\beta_0(1)\cdot \beta_2(\theta_1\theta_1')-\beta_1(\theta_1)\w\beta_1(\theta_1').\notag\end{align}
The maps $c_1$ and $c_0$ are both identically zero.
\end{enumerate}\end{definition}
Notice that the divided powers on the left side of (\ref{4.1.2}) take place in the formal divided power algebra $D_\bullet M_2$; the first divided power on the right side takes place in the $\DGGamma$-algebra $M$; and the second   divided power on the right side take place in the $\DGGamma$-algebra $K$.
\begin{observation}
\label{5-21.2}Retain the data of Definition~{\rm\ref{5-21.1}}. The following statements hold.
\begin{enumerate}[\rm(a)]
\item\label{5-21.2a} The maps and modules of $B$ form a complex.
\item\label{5-21.2b} The maps $c:B\to K$ form a map of complexes.
\item\label{5-21.2c} There are homotopy maps $h_i:B_i\to K_{i+1}$, for $0\le i\le 4$, such that \begin{enumerate}[\rm (i)]
\item $h_0$, $h_1$, and $h_4$ all are zero,
\item\label{5-21.2cii} the restriction of $h_2$ to the summand $M_2$ of $B_2$ is identically zero, and
\item\label{5-21.2ciii} $c_i=h_{i-1}\circ b_i+k_{i+1}\circ h_i$, for $1\le i\le 4$.
\end{enumerate}
\end{enumerate}
\end{observation}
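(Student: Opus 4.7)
The plan is to handle (a), (b), (c) in order; the first two are direct computational checks, while (c) is a standard lifting argument once one has (b) in hand.

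For (a), I would verify $b_i\circ b_{i+1}=0$ on each summand of the domain. The composition $b_1\circ b_2$ vanishes because $m_1\circ m_2=0$ together with the cancellation of $m_1(\theta_1)m_1(\theta_1')$ against $m_1(\theta_1')m_1(\theta_1)$. The composition $b_2\circ b_3$ uses the graded product rule (\ref{17.5}) on $M$ to rewrite $m_2(m_1(\theta_1)\cdot\theta_2)$ as $m_1(\theta_1)\cdot m_2(\theta_2)$, which cancels the other surviving term (the remaining piece vanishes by $m_1\circ m_2=0$). For $b_3\circ b_4$, I would evaluate separately on $\theta_2^{(2)}$ and on cross terms $\theta_2\cdot\theta_2'$ that arise from $(\theta_2+\theta_2')^{(2)}=\theta_2^{(2)}+\theta_2\cdot\theta_2'+{\theta_2'}^{(2)}$; the resulting wedges vanish either by $m_1\circ m_2=0$ or by antisymmetry, since $m_2(\theta_2)\in M_1$ lives in odd degree.

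For (b), I would verify $c_{i-1}\circ b_i=k_i\circ c_i$ for each~$i$, using repeatedly that $\a$ and $\beta$ are chain maps (Observation~\ref{17.41}), the Leibniz rule in $K$, and the identity $\beta_0(p)=p\cdot\beta_0(1)$ which is immediate from (\ref{17.2}). The case $i=2$ amounts to $k_2\circ c_2=0$ and is a short Leibniz computation after substituting $k_1\beta_1=\beta_0 m_1$; the case $i=3$ is similar. The case $i=4$ is the one genuinely interesting step: after expanding $b_4(\theta_2^{(2)})=m_2(\theta_2)\otimes\theta_2$ and using $m_2(\theta_2)\cdot\theta_2=m_4(\theta_2^{(2)})$ in $M$, one must recognize the surviving term $k_2(\beta_2(\theta_2))\w\beta_2(\theta_2)$ as $k_4\bigl(\beta_2(\theta_2)^{(2)}\bigr)$ via the divided-power differential rule (\ref{2.3.1}) applied in $K$.

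For (c), I would build the $h_i$ in increasing degree. Put $h_0=h_1=0$; then the homotopy identity at $i=1$ reduces to $c_1=0$, which holds by definition. At $i=2$ the identity becomes $k_3\circ h_2=c_2$; from $k_2\circ c_2=0$ (established in (b)) and exactness of $K$ at $K_2$, the image of $c_2$ lies in $\im k_3$, and since $B_2$ is free a lift $h_2$ exists. The formula for $c_2$ depends only on the $\bw^2 M_1$ component of its input, so $c_2|_{M_2}=0$ and one may take $h_2|_{M_2}=0$, giving property (ii). At $i=3$ I seek $h_3\colon B_3\to K_4$ with $k_4\circ h_3=c_3-h_2\circ b_3$; applying $k_3$ to the right-hand side and using the chain-map identity $k_3\circ c_3=c_2\circ b_3$ (from (b)) together with $k_3\circ h_2=c_2$ yields zero, so the right side lands in $\ker k_3=\im k_4$, and since $K_5=0$ the map $k_4$ is injective, so the lift $h_3$ exists and is uniquely determined. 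Finally set $h_4=0$; the relation at $i=4$ reads $c_4=h_3\circ b_4$, and this follows from
\[k_4\circ(h_3\circ b_4)=(c_3-h_2\circ b_3)\circ b_4=c_3\circ b_4=k_4\circ c_4,\]
using $b_3\circ b_4=0$ from (a) and the chain-map property from (b), combined with injectivity of $k_4$.

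The single genuinely subtle ingredient is the degree-$4$ verification in (b), which is precisely where the full $\DGGamma$-structure (and in particular the divided powers in $K$) is essential; everything else is linear algebra driven by the exactness of the Koszul resolution $K$.
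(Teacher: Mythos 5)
Your proposal is correct and follows essentially the same route as the paper: part (a) is a direct check (which the paper calls obvious), part (b) is the same Leibniz/chain-map computation culminating in the divided-power rule for the degree-$4$ square, and part (c) is the same inductive lifting over the acyclic Koszul complex, with $c_2|_{M_2}=0$ used to arrange $h_2|_{M_2}=0$ and injectivity of $k_4$ used to force $h_4=0$. The only difference is presentational: where the paper invokes the general existence of a null homotopy in one sentence and then observes the special choices can be made, you unpack the induction explicitly and verify the terminal step $c_4=h_3\circ b_4$ by hand; both are sound.
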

\begin{proof} 
Assertion~(\ref{5-21.2a}) is obvious. We first prove (\ref{5-21.2b}).
Observe that
\begingroup\allowdisplaybreaks
\begin{align*}&(k_4\circ c_4)(\theta_2^{(2)})=k_4
\Big(
\beta_0(1)\cdot\beta_4(\theta_2^{(2)})-  \Big(\beta_2(\theta_2)\Big)^{(2)}\Big)
\\
{}={}&
\beta_0(1)\cdot (\beta_3\circ m_4)(\theta_2^{(2)})-
(k_2\circ \beta_2)(\theta_2)\w \beta_2(\theta_2),&&\text{by (\ref{17.6}) and (\ref{2.3.1}),}\\
{}={}&
\beta_0(1)\cdot \beta_3(m_2(\theta_2)\cdot \theta_2)-
(\beta_1\circ m_2)(\theta_2)\w \beta_2(\theta_2),&&\text{by (\ref{2.3.1}) and (\ref{17.6}),}\\ 
{}={}&c_3(m_2(\theta_2)\t \theta_2)=(c_3\circ b_4)(\theta_2^{(2)}).\end{align*}
\endgroup
Observe also that
\begingroup\allowdisplaybreaks
\begin{align*}&
(k_3\circ c_3)(\theta_1\t \theta_2)\\{}={}&k_3\Big(\beta_0(1)\cdot \beta_3(\theta_1\cdot \theta_2)-\beta_1(\theta_1)\w \beta_2(\theta_2)\Big)
\\
{}={}&\begin{cases}\beta_0(1)\cdot (\beta_2\circ m_3)(\theta_1\cdot \theta_2)\\
-(k_1\circ\beta_1)(\theta_1)\w \beta_2(\theta_2)
+\beta_1(\theta_1)\w (k_2\circ \beta_2)(\theta_2),\end{cases}&&\text{by (\ref{17.6}) and \ref{2.4}.(\ref{2.4.b})},
\\
{}={}&\begin{cases}
\beta_0(1)\cdot \Big(
m_1(\theta_1)\cdot \beta_2(\theta_2) -\beta_2(\theta_1\cdot m_2(\theta_2) 
)\Big)\\
-
\beta_0(1)\cdot 
m_1(\theta_1)\cdot \beta_2(\theta_2)
+\beta_1(\theta_1)\w (\beta_1\circ m_2)(\theta_2),\end{cases}&&\text{by \ref{2.4}.(\ref{2.4.b}) and  (\ref{17.6})},
\\
{}={}& -\beta_0(1)\cdot \beta_2(\theta_1\cdot m_2(\theta_2) 
)
+\beta_1(\theta_1)\w (\beta_1\circ m_2)(\theta_2)
\\
{}={}&(c_2\circ b_3)(\theta_1\t \theta_2).
\end{align*}
\endgroup
Finally, observe that
\begingroup\allowdisplaybreaks
\begin{align*}&
(k_2\circ c_2)\left(\bmatrix \theta_1\w \theta_1'\\\theta_2\endbmatrix\right)\\
{}={}&k_2\big(\beta_0(1)\cdot \beta_2(\theta_1\theta_1')-\beta_1(\theta_1)\w \beta_1(\theta_1')\Big)\\
{}={}&\begin{cases}\phantom{+}\beta_0(1)\cdot (\beta_1\circ m_2)(\theta_1\theta_1')\\ -(k_1\circ \beta_1)(\theta_1)\cdot \beta_1(\theta_1')+\beta_1(\theta_1)\cdot (k_1\circ \beta_1)(\theta_1'),\end{cases}&&\text{by (\ref{17.6}) and \ref{2.4}.(\ref{2.4.b})},\\
{}={}&\begin{cases}\phantom{+}\beta_0(1)\cdot \Big(m_1(\theta_1)\cdot \beta_1(\theta_1')
-m_1(\theta_1')\cdot \beta_1(\theta_1)\Big)\\
 -\beta_0(1)\cdot m_1(\theta_1)\cdot \beta_1(\theta_1')+\beta_1(\theta_1)\cdot \beta_0(1)\cdot m_1(\theta_1'),\end{cases}
&&\text{by \ref{2.4}.(\ref{2.4.b}) and  (\ref{17.6})},
\\{}={}&0=c_1\circ b_2\left(\bmatrix \theta_1\w \theta_1'\\\theta_2\endbmatrix\right).\end{align*}\endgroup
This completes the proof of (\ref{5-21.2b}); now we prove (\ref{5-21.2c}).
The map $c:B\to K$ is a map of complexes with $c_0$ and $c_1$ both identically zero; furthermore, $K$ is a resolution. It follows that there is a homotopy $$\{h_i:B_i\to K_{i+1}\mid 0\le i\le 4\}$$ which satisfies condition (\ref{5-21.2ciii}). It is clear that $h_0$ and $h_1$ may be chosen to be zero. The target for $h_4$ is zero; so this map also is zero. The restriction of $h_2$ to $M_2$ may be taken to be any homomorphism which completes the homotopy  
$$\xymatrix{&&&M_2\ar[rrr]^{m_2}\ar[d]^(.4){c_2|_{M_2}=0} \ar[dlll]_(.4){h_2|_{M_2}}&&&M_1\ar[dlll]^(.4){h_1=0}\\ 
K_3\ar[rrr]^{k_3}&&&K_2,
}$$in the sense that $$c_2|_{M_2}=h_1\circ m_2+k_3\circ h_2|_{M_2}.$$
The maps $c_2|_{M_2}$ and $h_1$ are already identically zero. Consequently, one may choose $h_2|_{M_2}$ to be identically zero.
\end{proof}

\begin{lemma}
\label{17.40} Adopt the notation of {\rm \ref{data17}} and  {\rm\ref{17.42}}. Then there exists a homomorphism $X:M_1\to M_2$  which satisfies {\rm\ref{17.9}.(\ref{17.9.b})} and {\rm\ref{17.9}.(\ref{17.9.c})}.\end{lemma}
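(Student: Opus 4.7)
The plan is to manufacture $X$ directly from the map $h_3\colon B_3 = M_1\otimes_P M_2\to K_4$ provided by the homotopy of Observation~\ref{5-21.2}. Because $K_4\cong P$ via the orientation $[-]_K$, the composite
\[\psi(\theta_1,\theta_2):=[h_3(\theta_1\otimes\theta_2)]_K\]
is a bilinear form on $M_1\otimes_P M_2$, and the Poincar\'e pairing $M_2\otimes_P M_2\to M_4 \xrightarrow{[-]_M} P$ being perfect yields a unique $P$-linear map $X\colon M_1\to M_2$ satisfying $[X(\theta_1)\cdot \theta_2]_M=\psi(\theta_1,\theta_2)$ for all $\theta_1\in M_1$ and $\theta_2\in M_2$.

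For {\rm\ref{17.9}.(\ref{17.9.b})}, the perfect pairing $M_1\otimes M_3\to M_4\cong P$ reduces the identity to a value-by-value check against arbitrary $\theta_3\in M_3$. The graded product rule in $M$ rewrites $[m_2X(\theta_1)\cdot \theta_3]_M$ as $-\psi(\theta_1,m_3\theta_3)$, and equation~(\ref{17.2}) rewrites $[\alpha_1\beta_1(\theta_1)\cdot \theta_3]_M$ as $[\beta_1(\theta_1)\wedge \beta_3(\theta_3)]_K$; the required identity is therefore
\[h_3(\theta_1\otimes m_3\theta_3) = \beta_1(\theta_1)\wedge \beta_3(\theta_3)-\beta_0(1)\beta_4(\theta_1\cdot \theta_3)\]
in $K_4$. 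The homotopy relation $k_4\circ h_3 = c_3-h_2\circ b_3$ applied to $\theta_1\otimes m_3\theta_3$ collapses, because $m_2\circ m_3=0$ and $h_2|_{M_2}=0$, to $k_4 h_3(\theta_1\otimes m_3\theta_3)=c_3(\theta_1\otimes m_3\theta_3)$. Expanding the definition of $c_3$ and repeatedly applying the chain map identities $\beta_3 m_4=k_4\beta_4$, $\beta_2 m_3=k_3\beta_3$, $\beta_0 m_1=k_1\beta_1$ in concert with the graded Leibniz rules in $M$ and $K$ rewrites the right hand side in the desired form under $k_4$; injectivity of $k_4$ and the identity $[\beta_4(\theta_4)]_K=[\theta_4]_M$ from~\ref{19.3.7} complete the verification.

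For {\rm\ref{17.9}.(\ref{17.9.c})}, the perfect pairing $M_2\otimes M_2\to M_4\cong P$ again reduces the identity to a value-by-value check; using the graded product rule to evaluate $[m_3X^\dagger(\theta_2)\cdot \theta_2']_M$ and (\ref{17.2}) to evaluate $[\alpha_2\beta_2(\theta_2)\cdot \theta_2']_M$, the identity becomes the symmetric bilinear relation
\[\psi(m_2\theta_2,\theta_2')+\psi(m_2\theta_2',\theta_2)=\beta_0(1)[\theta_2\cdot \theta_2']_M-[\beta_2(\theta_2)\wedge \beta_2(\theta_2')]_K\]
for all $\theta_2,\theta_2'\in M_2$. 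The relation $h_3\circ b_4=c_4$, which holds because $h_4=0$, evaluated on $\theta_2^{(2)}\in D_2M_2=B_4$ yields $\psi(m_2\theta_2,\theta_2)=\beta_0(1)[\theta_2^{(2)}]_M-[(\beta_2\theta_2)^{(2)}]_K$. Polarizing by substituting $\theta_2+\theta_2'$ and expanding both sides via $(x+y)^{(2)}=x^{(2)}+x\cdot y+y^{(2)}$ in $D_\bullet M_2$ and in $K$ isolates the cross term $\psi(m_2\theta_2,\theta_2')+\psi(m_2\theta_2',\theta_2)$ and recovers the displayed bilinear identity.

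The main obstacle is verifying {\rm\ref{17.9}.(\ref{17.9.c})} in arbitrary characteristic: ordinary polarization of a quadratic form would require inverting $2$. The point of building $B$ with divided powers (so that $B_4=D_2M_2$ and both $b_4$ and $c_4$ are phrased in terms of $x^{(2)}$ rather than $x^2$) is that the homotopy relation $h_3\circ b_4=c_4$ already encodes the polarization at the chain level through the identity $(x+y)^{(2)}=x^{(2)}+x\cdot y+y^{(2)}$, eliminating any need to halve and permitting the argument to proceed over an arbitrary commutative Noetherian base ring.
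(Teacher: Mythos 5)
Your proposal is correct and follows essentially the same route as the paper: you define $X$ from the homotopy map $h_3$ via the Poincar\'e pairing (the paper writes this as $X(\theta_1)\cdot\theta_2=(\beta_4^{-1}\circ h_3)(\theta_1\otimes\theta_2)$, which is the same thing since $[\beta_4(\theta_4)]_K=[\theta_4]_M$), verify \ref{17.9}.(\ref{17.9.b}) from the relation $k_4\circ h_3+h_2\circ b_3=c_3$ together with $m_2m_3=0$ and $h_2|_{M_2}=0$, and verify \ref{17.9}.(\ref{17.9.c}) from $h_3\circ b_4=c_4$ by polarizing the divided-power identity exactly as the paper does when it evaluates $b_4$ and $c_4$ on $\theta_2\cdot\theta_2'\in D_2M_2$. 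The only differences are cosmetic (order of the two verifications, phrasing through $[-]_K$ versus $\beta_4^{-1}$), and your final remark about divided powers replacing ordinary polarization so that $2$ need not be invertible correctly identifies the point of the construction of $B$.
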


\begin{proof}
Let $\{h_i:B_i\to K_{i+1}\}$ be the homotopy of Observation~\ref{5-21.2}.(\ref{5-21.2c}).
Define $$X:M_1\to M_2$$ by 
\begin{equation}\label{5-20.1}
X(\theta_1)\cdot \theta_2=(\beta_4^{-1}\circ h_3)(\theta_1\t \theta_2).\end{equation} (Recall from \ref{19.3.7} that $\beta_4$ is an isomorphism.) We first  prove that $X$ satisfies  {\rm\ref{17.9}.(\ref{17.9.c})}. Observe that 
\begingroup\allowdisplaybreaks\begin{align*}&(X\circ m_2+m_3\circ X^\dagger)(\theta_2)\cdot \theta_2'\\{}={}&(\beta_4^{-1}\circ h_3)(m_2(\theta_2)\t \theta_2')+\theta_2\cdot X(m_2(\theta_2')),&&\text{by (\ref{17.5}) and \ref{17.8},}\\
{}={}&(\beta_4^{-1}\circ h_3)(m_2(\theta_2)\t \theta_2')+
(\beta_4^{-1}\circ h_3)(m_2(\theta_2')\t \theta_2)\\
{}={}&(\beta_4^{-1}\circ h_3)(b_4(\theta_2\theta_2'))\\
{}={}&(\beta_4^{-1}\circ c_4)(\theta_2\theta_2'),&&\text{by Obs.~\ref{5-21.2}.(\ref{5-21.2ciii})},\\
{}={}&\beta_0(1)\cdot \theta_2\theta_2'-\beta_4^{-1}\Big(\beta_2(\theta_2)\w \beta_2(\theta_2')\Big)\\
{}={}&\beta_0(1)\cdot \theta_2\theta_2'-\beta_4^{-1}\Big(\beta_4\big(\theta_2\cdot (\a_2\circ\beta_2)(\theta_2')\big)\Big),&&\text{by Obs.~\ref{17.3}.(\ref{17.3.c})},\\
{}={}&\beta_0(1)\cdot \theta_2\theta_2'-\theta_2\cdot (\a_2\circ \beta_2)(\theta_2')\\
{}={}&\beta_0(1)\cdot \theta_2\theta_2'-(\a_2\circ\beta_2)(\theta_2)\cdot \theta_2'&&\text{by Obs.~\ref{17.3}.(\ref{17.3.b})},\\
{}={}&
\Big(\beta_0(1)\cdot \id_{M_2}-\a_2\circ \beta_2\Big)(\theta_2)\cdot \theta_2'.
\end{align*} \endgroup
Now we prove that $X$ satisfies {\rm\ref{17.9}.(\ref{17.9.b})}. 
Recall from (\ref{17.5}) that 
\begin{equation}\label{4.3.2}(m_2\circ X)(\theta_1)\cdot \theta_3=-X(\theta_1)\cdot m_3(\theta_3)=
-(\beta_4^{-1}\circ h_3)(\theta_1\t m_3(\theta_3)).\end{equation}
Apply Observation~\ref{5-21.2}.(\ref{5-21.2ciii}) to see that
$$k_4\circ h_3+h_2\circ b_3=c_3.$$ Observe that  $$b_3(\theta_1\t m_3(\theta_3))=\bmatrix 0\\m_1(\theta_1)\cdot m_3(\theta_3)\endbmatrix,$$which is in the summand $M_2$ of $T_2$. It follows from Obs.~\ref{5-21.2}.(\ref{5-21.2cii}) that $$(h_2\circ b_3)(\theta_1\t m_3(\theta_3))=0;$$ and therefore, 
\begin{align*}&(k_4\circ h_3)(\theta_1\t m_3(\theta_3))=c_3(\theta_1\t m_3(\theta_3))\\
{}={}&\beta_0(1)\cdot \beta_3((\theta_1\cdot m_3(\theta_3))-\beta_1(\theta_1)\w (\beta_2\circ m_3)(\theta_3).\intertext{Use the commutative diagram (\ref{17.6}) to write $\beta_2\circ m_3$ as $k_3\circ \beta_3$ and then use the product rule \ref{2.4}.(\ref{2.4.b}) on each summand. It follows that $(k_4\circ h_3)(\theta_1\t m_3(\theta_3))$ is equal to}
{}={}&\begin{cases}\phantom{+}\beta_0(1)\cdot \Big(m_1(\theta_1)\cdot \beta_3(\theta_3)-(\beta_3\circ m_4)(\theta_1\cdot \theta_3)\Big)\\- \Big(\beta_0(1)\cdot m_1(\theta_1)\cdot \beta_3(\theta_3)-k_4\big(\beta_1(\theta_1)\w \beta_3(\theta_3)\big)\Big)\end{cases}\\
{}={}&k_4\Big(-\beta_0(1)\cdot\beta_4(\theta_1\cdot \theta_3) +\beta_1(\theta_1)\w \beta_3(\theta_3) \Big),&&\text{by (\ref{17.6}).}\end{align*}
The map $k_4$ is injective; hence, 
$$h_3(\theta_1\t m_3(\theta_3))=-\beta_0(1)\cdot\beta_4(\theta_1\cdot \theta_3) +\beta_1(\theta_1)\w \beta_3(\theta_3),$$
and (\ref{4.3.2}) now becomes
\begin{align*}(m_2\circ X)(\theta_1)\cdot \theta_3={}&-(\beta_4^{-1}\circ h_3)(\theta_1\t m_3(\theta_3))\\
{}={}&\beta_4^{-1}\Big(\beta_0(1)\cdot\beta_4(\theta_1\cdot \theta_3) -\beta_1(\theta_1)\w \beta_3(\theta_3)\Big).\end{align*}
Recall  from assertions (\ref{17.3.c}) and (\ref{17.3.b}) of Observation~\ref{17.3}
that \begin{align*}\beta_4^{-1}\Big(\beta_1(\theta_1)\w \beta_3(\theta_3)\Big)
={}&\beta_4^{-1}\Big(\beta_4\big(\theta_1\cdot (\a_3\circ\beta_3)(\theta_3)\big)\Big)=\theta_1\cdot (\a_3\circ\beta_3)(\theta_3)\\
{}={}&(\a_1\circ \beta_1)(\theta_1)\cdot \theta_3.\end{align*}
Thus, $(m_2\circ X)(\theta_1)\cdot \theta_3=\beta_0(1)\cdot \theta_1\cdot \theta_3
-(\a_1\circ \beta_1)(\theta_1)\cdot \theta_3$ and $X$ satisfies \ref{17.9}.(\ref{17.9.b}).
\end{proof}

\section{There exists a homomorphism $X$ which satisfies {\rm\ref{17.9}.(\ref{17.9.a})}, {\rm\ref{17.9}.(\ref{17.9.b})}, and {\rm\ref{17.9}.(\ref{17.9.c})}.}\label{6}

Lemma~\ref{17.34} is the main result in this section; its proof
 is given in \ref{17.32}.

\begin{lemma}
\label{17.34} Adopt the notation of {\rm \ref{data17}} and  {\rm\ref{17.42}}.
 Let
 $X:M_1\to M_2$ be the  homomorphism of Lemma~{\rm\ref{17.40}}. Then there is a homomorphism $U:M_1\to M_3$ such that $X'=X-m_3\circ U$
 satisfies {\rm\ref{17.9}.(\ref{17.9.a})}, 
{\rm\ref{17.9}.(\ref{17.9.b})}, and {\rm\ref{17.9}.(\ref{17.9.c})}.
\end{lemma}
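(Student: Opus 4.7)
My plan is to build $U$ in two pieces: first a natural candidate $U_0$ coming from a lift of $X\circ\a_1$ through $m_3$, then an adjustment supported on $M_{1,2}$ that preserves condition~\ref{17.9}.(\ref{17.9.c}).

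Applying~\ref{17.9}.(\ref{17.9.b}) at $\a_1(\phi_1)\in M_{1,1}$ and invoking Observation~\ref{17.3}.(\ref{17.3.a}), one sees that $(m_2\circ X\circ\a_1)(\phi_1)=\beta_0(1)\a_1(\phi_1)-\a_1(\beta_0(1)\cdot\phi_1)=0$, so $X\circ\a_1\colon K_1\to M_2$ lands in $\ker m_2=\im m_3$. Because $K_1$ is free, one lifts to a homomorphism $V\colon K_1\to M_3$ with $m_3\circ V=X\circ\a_1$. Using the isomorphism $K_1\xrightarrow{\sim} M_{1,1}$ from Definition~\ref{17.42} and extending by zero on $M_{1,2}$ produces a candidate $U_0\colon M_1\to M_3$. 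For $X_0=X-m_3\circ U_0$, condition~\ref{17.9}.(\ref{17.9.a}) holds by construction and~\ref{17.9}.(\ref{17.9.b}) is preserved since $m_2\circ m_3=0$.

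The real difficulty is~\ref{17.9}.(\ref{17.9.c}). A graded-product computation, in the style of Section~\ref{PC}, shows that for any $U\colon M_1\to M_3$ one has $(X-m_3\circ U)^\dagger=X^\dagger+\widetilde U$, where $\widetilde U\colon M_2\to M_3$ is the unique map such that $\widetilde U(\theta_2)\cdot\theta_1=m_2(\theta_2)\cdot U(\theta_1)$ (the existence uses the Poincar\'e duality of $M$ and the fact that $\theta_2\cdot U(\theta_1)\in M_5=0$). Hence passing to $X'=X-m_3\circ U$ changes $X\circ m_2+m_3\circ X^\dagger$ by $m_3\circ(\widetilde U-U\circ m_2)$, and pairing against an arbitrary $\theta_2'\in M_2$ shows that this change vanishes if and only if the symmetric bilinear identity
\[
m_2(\theta_2)\cdot U(m_2(\theta_2'))=U(m_2(\theta_2))\cdot m_2(\theta_2')\quad\text{in } M_4
\]
holds for all $\theta_2,\theta_2'\in M_2$. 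Preserving~\ref{17.9}.(\ref{17.9.c}) thus reduces to making the obstruction form $\Phi(\theta_2,\theta_2')$ given by the left side minus the right side vanish identically by choice of $U$.

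To arrange $\Phi\equiv 0$, I would decompose $m_2(\theta_2)=x+y$ and $m_2(\theta_2')=x'+y'$ with $x,x'\in M_{1,1}$ and $y,y'\in M_{1,2}$, and split $\Phi$ into four bilinear pieces. The piece supported on $M_{1,1}\otimes M_{1,1}$ depends only on the lift $V$; it should vanish automatically as a consequence of~\ref{17.9}.(\ref{17.9.c}) for $X$ applied to elements of the form $\a_2(\phi_2)$, combined with Observations~\ref{17.3}.(\ref{17.3.a}) and~\ref{17.41} (which give $\beta_2\circ\a_2=\beta_0(1)\cdot\id_{K_2}$ and the chain-map compatibility of $\beta$). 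The remaining mixed pieces and the $M_{1,2}\otimes M_{1,2}$ piece can then be killed by choosing a correction $\widehat U\colon M_{1,2}\to M_3$ and replacing $U_0$ by $U=U_0+\widehat U\circ\proj_{M_{1,2}}$; the solvability of the resulting linear system for $\widehat U$ is underwritten by the perfect pairings $M_{1,j}\otimes M_{3,j}\to M_4$ of~\ref{17.7}. I expect the hardest step to be the automatic vanishing of the $M_{1,1}\otimes M_{1,1}$ component of $\Phi$; this is where the full strength of conditions~\ref{17.9}.(\ref{17.9.b}) and~\ref{17.9}.(\ref{17.9.c}) for $X$ must be combined with the chain-map properties of~$\a$ and~$\beta$, and where Lemma~\ref{apr29-19} enters, reducing the verification over a general commutative Noetherian ring to an identity that can be checked after localizing at regular elements.
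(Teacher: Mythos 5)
Your overall framework parallels the paper's: lift $X\circ\a_1$ through $m_3$ to a map $K_1\to M_3$, then translate the preservation of~\ref{17.9}.(\ref{17.9.c}) into an antisymmetry condition on $U$ and kill the obstruction piece by piece. The reduction to the symmetric bilinear form $\Phi$ is correct (your identity is equivalent, via graded commutativity, to the antisymmetry $U(m_2\theta_2)\cdot m_2(\theta_2') + U(m_2\theta_2')\cdot m_2(\theta_2)=0$ in the paper's {\rm(\ref{17.33})}), and the strategy of extending $U$ on $M_{1,2}$ via the perfect pairings to kill the mixed and $M_{1,2}\otimes M_{1,2}$ pieces is exactly what Definition~\ref{17.35} does.

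The gap is the assertion that the $M_{1,1}\otimes M_{1,1}$ piece of $\Phi$ ``should vanish automatically.'' It does not, and in fact it cannot: that piece is the quantity $u(\phi_1)\cdot\a_1(\phi_1')+u(\phi_1')\cdot\a_1(\phi_1)$, and its value depends on the choice of lift $V$ --- two lifts of $X\circ\a_1$ through $m_3$ differ by an arbitrary map $K_1\to\im m_4$, which changes this expression by $m_4(\tilde v(\phi_1))\cdot\a_1(\phi_1')+m_4(\tilde v(\phi_1'))\cdot\a_1(\phi_1)=-k_1(\phi_1)\tilde v(\phi_1')-k_1(\phi_1')\tilde v(\phi_1)$, which is not zero in general. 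The honest content of Lemma~\ref{17.29} is that one can \emph{choose} the lift (by adding a suitable $m_4\circ v$) to make this piece vanish. Moreover, the ingredient you cite --- condition~\ref{17.9}.(\ref{17.9.c}) evaluated on $\a_2(\phi_1\w\phi_1')$ together with $\beta_2\circ\a_2=\beta_0(1)\id$ --- does not yield the vanishing directly; as the paper's computation leading to {\rm(\ref{17.30})} shows, it only yields an identity in which the target symmetric expression appears multiplied by products of the Koszul entries $k_1(\phi_1)k_1(\phi_1')$. To strip those factors off one needs both the regular-element basis of Lemma~\ref{apr29-19} and a separate construction of the correction $v$, and to produce the identity {\rm(\ref{17.30})} at all one needs the divided-power element $\bigl(\a_2(\phi_1\w\phi_1')\bigr)^{(2)}$, Observation~\ref{5-20.5}, and the fact that the particular $X$ of Lemma~\ref{17.40} arose from the homotopy $h_3$ of Observation~\ref{5-21.2} --- not just from $X$ satisfying the stated identities. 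In short, Lemma~\ref{17.29} is the hard step, and it is precisely the step your proposal waves through.
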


The map $u$ of Observation~\ref{17.28} is a first approximation of the map $U$ which is promised in Lemma~\ref{17.34}. The map $u$ will be modified in Lemma~\ref{17.29} and Definition~\ref{17.35}. 

\begin{observation}
\label{17.28}  Adopt the notation of {\rm \ref{data17}} and  {\rm\ref{17.42}}.
 Let
 $X:M_1\to M_2$ be the  homomorphism of Lemma~{\rm\ref{17.40}}. Then there exists a homomorphism $u:K_1\to M_3$ such that
\begin{equation}\notag X\circ \a_1=m_3\circ u: K_1\to M_3.\end{equation}
\end{observation}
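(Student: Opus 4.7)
My plan is to use the projectivity of $K_1$ together with the fact that $X \circ \alpha_1$ lands in the image of $m_3$. Since $M$ is a resolution (in particular, it is acyclic), we have $\ker m_2 = \operatorname{im} m_3$, so it suffices to verify that $m_2 \circ X \circ \alpha_1 = 0$, and then lift through the surjection $M_3 \twoheadrightarrow \operatorname{im} m_3$ using the fact that $K_1$ is a free (hence projective) $P$-module.

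To check the vanishing, I would directly invoke the two properties of $X$ that are already known from Lemma~\ref{17.40}, namely property \ref{17.9}.(\ref{17.9.b}):
\[
m_2 \circ X = \beta_0(1) \cdot \id_{M_1} - \a_1 \circ \beta_1.
\]
Composing on the right with $\a_1$ gives
\[
m_2 \circ X \circ \a_1 \;=\; \beta_0(1)\cdot \a_1 - \a_1 \circ (\beta_1 \circ \a_1).
\]
By Observation~\ref{17.3}.(\ref{17.3.a}) applied at $i=1$, we have $\beta_1 \circ \a_1 = \beta_0(1)\cdot \id_{K_1}$, so the right-hand side collapses to zero.

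Thus $X \circ \a_1$ factors through $\ker m_2 = \operatorname{im} m_3$, and since $K_1$ is free we may lift to obtain the desired $u \colon K_1 \to M_3$ with $m_3 \circ u = X \circ \a_1$. There is no real obstacle here; the only conceptual input is property \ref{17.9}.(\ref{17.9.b}) (already secured in Section~\ref{5}) together with the identity $\beta_i \circ \a_i = \beta_0(1)\cdot \id_{K_i}$ from Observation~\ref{17.3}. The more delicate question of whether $u$ can be further adjusted so that $X - m_3 \circ U$ actually satisfies \ref{17.9}.(\ref{17.9.a}) (i.e.\ equals zero after precomposing with $\a_1$, not merely lies in $\operatorname{im} m_3$) is deferred to Lemma~\ref{17.29} and Definition~\ref{17.35}, as indicated by the authors.
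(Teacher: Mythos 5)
Your argument is correct and matches the paper's proof essentially step for step: you compose \ref{17.9}.(\ref{17.9.b}) with $\alpha_1$, invoke Observation~\ref{17.3}.(\ref{17.3.a}) to kill the right-hand side, and then lift through $m_3$ using acyclicity of $M$ and freeness of $K_1$ (the paper phrases this last step as acyclicity of $\Hom_P(K_1,M)$, which is the same thing). No differences worth noting.
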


\begin{proof}
Consider $\a_1$ followed by  \ref{17.9}.(\ref{17.9.b}):
$$m_2\circ (X\circ \a_1)= \big(\beta_0(1)\cdot \id_{M_1}-\alpha_1\circ\beta_1\big)\circ \a_1.$$ Apply Observation~\ref{17.3}.(\ref{17.3.a}) to see that the right side of the previous equation is zero. It follows that $m_2\circ (X\circ \a_1)$ is identically zero. The complex $\Hom_P(K_1,M)$ is acyclic; hence there exists a homomorphism $u:K_1\to M_3$ such that
\begin{equation}\notag X\circ \a_1=m_3\circ u: K_1\to M_3.\end{equation}
\vskip-24pt\end{proof}

In order to  modify $u$ (and $X$), we use the homotopy of Observation~\ref{5-21.2}.(\ref{5-21.2c}). The homotopy map $h_3$   gave rise to the homomorphism $X:M_1\to M_2$ of Lemma~\ref{17.40}; but $h_3$ contains information about $X$ that we have  not yet exploited. 
\begin{observation} 
\label{5-20.5} The restriction of the map $c_4$ of Definition~{\rm\ref{5-21.1}} to $D_2(\im \a_2)$
 is identically zero.\end{observation}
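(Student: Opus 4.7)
The plan is to verify that $c_4(\a_2(\phi_2)^{(2)}) = 0$ for every $\phi_2 \in K_2$, where the divided power is taken in the formal algebra $D_2 M_2 = B_4$, and then deduce the general case by polarization. Every element of $D_2(\im \a_2)$ is a $P$-linear combination of divided squares $\a_2(\phi_2)^{(2)}$ and products $\a_2(\phi_2) \cdot \a_2(\phi_2')$; and the identity $(x+y)^{(2)} = x^{(2)} + x \cdot y + y^{(2)}$ recorded in \ref{2.1}, together with the $P$-linearity of $\a_2$, writes any such product as a combination of three divided squares of elements of $\im \a_2$. Since $c_4$ is $P$-linear, handling the squared case suffices.

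For the squared case I would compute directly from the formula (\ref{4.1.2}). Because $\a: K \to M$ is a morphism of $\DGGamma$-algebras, it commutes with the divided power operation, so $\a_2(\phi_2)^{(2)} = \a_4(\phi_2^{(2)})$ as elements of $M_4$. Observation~\ref{17.3}.(\ref{17.3.a}) then gives $\beta_4(\a_4(\phi_2^{(2)})) = \beta_0(1) \cdot \phi_2^{(2)}$ and $\beta_2(\a_2(\phi_2)) = \beta_0(1) \cdot \phi_2$. Applying the rule (\ref{2.2.a}) with $p = \beta_0(1)$ and $n = 2$ yields
$$\bigl(\beta_2(\a_2(\phi_2))\bigr)^{(2)} = \bigl(\beta_0(1) \cdot \phi_2\bigr)^{(2)} = \beta_0(1)^2 \cdot \phi_2^{(2)}$$
in $K_4$. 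Substituting into (\ref{4.1.2}) produces
$$c_4\bigl(\a_2(\phi_2)^{(2)}\bigr) = \beta_0(1) \cdot \beta_0(1) \cdot \phi_2^{(2)} - \beta_0(1)^2 \cdot \phi_2^{(2)} = 0,$$
as desired.

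The only subtlety is bookkeeping: three different divided-power structures enter the argument, namely the formal one on $D_\bullet M_2$ by which $B_4$ is defined, the $\DGGamma$-structure on $M$ used to interpret $\beta_4(\theta_2^{(2)})$, and the $\DGGamma$-structure on $K$ in which $(\beta_2(\theta_2))^{(2)}$ lives. The argument works precisely because $\a$ intertwines the first two with the third, which is exactly the content of $\a$ being a $\DGGamma$-algebra map combined with Observation~\ref{17.3}.(\ref{17.3.a}); accordingly, I do not anticipate any genuine obstacle beyond being careful about which divided power is meant at each step.
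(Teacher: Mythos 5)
Your proof is correct, and it is genuinely more direct than the one in the paper. The key move you make that the paper does not is to invoke, right at the outset, that $\a$ preserves divided powers, i.e.\ $(\a_2(\phi_2))^{(2)}_M = \a_4(\phi_2^{(2)}_K)$; this is part of the definition of $\a$ as a $\DGGamma$-algebra map and it collapses the first term of (\ref{4.1.2}) to $\beta_0(1)\cdot \beta_4(\a_4(\phi_2^{(2)})) = \beta_0(1)^2\cdot \phi_2^{(2)}$ by Observation~\ref{17.3}.(\ref{17.3.a}), which matches the second term after applying (\ref{2.2.a}). The paper instead does not touch the divided power in $M_4$ directly: it composes with the injection $k_4$, uses the $\DGGamma$-differential rule (\ref{2.3.1}) to trade the divided power for an ordinary product via $k_4\circ\beta_4 = \beta_3\circ m_4$ and $m_4(\theta^{(2)}) = m_2(\theta)\cdot\theta$, and then unwinds through the commutative diagram~(\ref{17.6}), Observation~\ref{17.3}.(\ref{17.3.c}), and Observation~\ref{17.3}.(\ref{17.3.a}). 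Both routes end at the same equality $\beta_0(1)^2\cdot k_4(\phi_2^{(2)}) = k_4(((\beta_2\circ\a_2)(\phi_2))^{(2)})$. What your approach buys is brevity; what the paper's approach buys is that it only ever manipulates divided powers in $K$ (via $k_4$) rather than in $M$, which keeps the verification closer to Koszul-complex bookkeeping and avoids leaning on the (true but less elementary) assertion that the lift $\a$ respects divided powers in all even degrees. Your explicit polarization remark about $D_2(\im\a_2)$ being spanned by divided squares $\a_2(\phi_2)^{(2)}$ via the identity $(x+y)^{(2)} = x^{(2)}+x\cdot y+y^{(2)}$ is also right and is implicit in the paper (see~\ref{2.1}); the paper simply states it for ``each $\phi_2\in K_2$'' without spelling out the reduction.
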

\begin{proof}
The map 
 $k_4$ is an injection. It suffices to show that \begin{equation}\label{5-20.4}(k_4\circ c_4)\Big((\a_2(\phi_2))^{(2)}\Big)=0,\end{equation}for each $\phi_2\in K_2$. 
The left side of (\ref{5-20.4}) is 
$$\beta_0(1)\cdot (k_4\circ \beta_4)\Big(\big(\a_2(\phi_2)\big)^{(2)}\Big)-k_4\Big(\big((\beta_2\circ \a_2)(\phi_2)\big)^{(2)}\Big).$$
Apply (\ref{17.6}), (\ref{2.3.1}),  assertions (\ref{17.3.c}) and (\ref{17.3.a}) of Observation~\ref{17.3}, and the Divided Power axiom (\ref{2.2.a})
  to see that
\begin{align*}&\beta_0(1)\cdot (k_4\circ \beta_4)\Big(\big(\a_2(\phi_2)\big)^{(2)}\Big)
= \beta_0(1)\cdot (\beta_3\circ m_4)\Big(\big(\a_2(\phi_2)\big)^{(2)}\Big)\\
{}={}& \beta_0(1)\cdot \beta_3\Big(
(m_2\circ \a_2)(\phi_2)\cdot \a_2(\phi_2)\Big)
= \beta_0(1)\cdot \beta_3\Big(
(\a_1\circ k_2)(\phi_2)\cdot \a_2(\phi_2)\Big)\\
{}={}& \beta_0(1)\cdot 
(\beta_1\circ \a_1\circ k_2)(\phi_2)\wedge \phi_2
=\beta_0(1)^2\cdot k_2(\phi_2)\wedge \phi_2\\
{}={}& \beta_0(1)^2\cdot k_4\big(\phi_2^{(2)}\big)
=k_4\Big(\big(\beta_0(1)(\phi_2)\big)^{(2)}\Big)\\
{}={}&k_4\Big(\big((\beta_2\circ \a_2)(\phi_2)\big)^{(2)}\Big).
\end{align*}
Thus, (\ref{5-20.4}) is established and the proof is complete.
\end{proof}

The serious work in this section is done in the proof of Lemma~\ref{17.29}.
\begin{lemma}
\label{17.29} Adopt the notation of {\rm \ref{data17}} and  {\rm\ref{17.42}} and let $u$ be the homomorphism of Observation~{\rm\ref{17.28}}. Then there exists  a homomorphism $v: K_1\to M_4$ such that 
the homomorphism 
\begin{equation}\label{17.37}u'=(u+m_4\circ v):K_1\to M_3,\end{equation} 
satisfies
$$u'(\phi_1)\cdot \a_1(\phi_1')+u'(\phi_1')\cdot \a_1(\phi_1)=0\quad\text{and}\quad u'(\phi_1)\cdot \a_1(\phi_1)=0,$$ for all $\phi_1,\phi_1'$ in $K_1$. 
\end{lemma}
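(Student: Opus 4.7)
The plan is to use the divided-power identity in Observation~\ref{5-20.5} to show that the quadratic form $Q : K_1 \to M_4$, $Q(\phi_1) := u(\phi_1)\cdot \alpha_1(\phi_1)$, vanishes on the Koszul syzygies $\operatorname{im} k_2$, and then split off $Q$ as a multiple of $k_1$ to manufacture $v$.

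First, apply Lemma~\ref{apr29-19} to replace the given regular sequence generating $\mfK$ by one in which each generator $a_i := k_1(\epsilon_i)$ is itself a regular element of $P$; let $\epsilon_1,\dots,\epsilon_4$ be the corresponding basis of $K_1$. Observation~\ref{5-20.5} asserts $c_4\big((\alpha_2(\phi_2))^{(2)}\big) = 0$ for every $\phi_2\in K_2$. Combining this with the homotopy identity $c_4 = h_3\circ b_4$ from Observation~\ref{5-21.2}.(\ref{5-21.2c}) and the defining relation $X(\theta_1)\cdot\theta_2 = \beta_4^{-1}(h_3(\theta_1\otimes\theta_2))$ from Lemma~\ref{17.40}, the identity translates into
$$X\big(\alpha_1(k_2\phi_2)\big)\cdot \alpha_2(\phi_2) = 0 \quad\text{in } M_4.$$
The graded product rule applied to $u(k_2\phi_2)\cdot\alpha_2(\phi_2)\in M_5=0$, together with $m_3 u = X\alpha_1$ (Observation~\ref{17.28}) and $m_2\alpha_2 = \alpha_1 k_2$ (diagram~(\ref{17.6})), converts this into
$$Q(k_2\phi_2) \;=\; u(k_2\phi_2)\cdot\alpha_1(k_2\phi_2) \;=\; 0 \quad\text{in } M_4.$$

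Set $P_{ij} := [u(\epsilon_i)\cdot\alpha_1(\epsilon_j)]_M$. Specializing $\phi_2 = \epsilon_p\wedge\epsilon_q$ and applying the orientation $[-]_M$ to $Q(k_2\phi_2)=0$ yields
$$a_p a_q\,(P_{pq}+P_{qp}) \;=\; a_p^{\,2}\,P_{qq} + a_q^{\,2}\,P_{pp}.$$
Reducing modulo $a_p$ and using that $a_q$ (hence $a_q^{\,2}$) is a non-zerodivisor on $P/a_pP$ because $a_1,\dots,a_4$ is a regular sequence, I deduce $a_p\mid P_{pp}$. Since $a_p$ is regular on $P$, the quotient $\tilde v_p := P_{pp}/a_p \in P$ is well-defined; cancelling the regular element $a_pa_q$ from the displayed relation then gives $P_{pq}+P_{qp} = \tilde v_p a_q + \tilde v_q a_p$. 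Define $v: K_1\to M_4$ by $v(\epsilon_p) := [-]_M^{-1}(\tilde v_p)$ and extend linearly.

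To verify that $u' := u + m_4\circ v$ satisfies both required identities, apply the graded product rule to $v(\phi_1)\cdot\alpha_1(\phi_1')\in M_5=0$ to obtain $m_4 v(\phi_1)\cdot\alpha_1(\phi_1') = -v(\phi_1)\,k_1(\phi_1')$. Expanding everything in the basis $\{\epsilon_i\}$ and applying $[-]_M$, both required identities reduce to the scalar relations $P_{ii} = \tilde v_i a_i$ and $P_{ij}+P_{ji} = \tilde v_i a_j + \tilde v_j a_i$ established above; the off-diagonal identity propagates to arbitrary $\phi_1,\phi_1'$ because the discrepancy $P_{ij}-\tilde v_i a_j$ is anti-symmetric in $(i,j)$, and anti-symmetric bilinear contributions vanish when paired symmetrically against $\phi_1\otimes\phi_1'$. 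The main obstacle is Step~2, the derivation of $Q(k_2\phi_2)=0$ from Observation~\ref{5-20.5}: this is precisely where the divided-power structure on $B$, $M$, and $K$ is indispensable, sidestepping the invertibility of $2$ that the author notes was needed in an earlier version of the paper.
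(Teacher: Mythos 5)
Your overall plan is the same as the paper's: use Observation~\ref{5-20.5} together with the homotopy to obtain the identity $u(k_2\phi_2)\cdot(\a_1\circ k_2)(\phi_2)=0$ (the paper's~(\ref{17.30}) is exactly the expansion of this), peel off the diagonal entries $P_{pp}$ as multiples of $a_p$, define $v$ accordingly, and finish by symmetric bilinearity. Your derivation of $Q(k_2\phi_2)=0$ and your closing verification are correct.

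However, there is a genuine gap in the middle step. You reduce $a_p a_q(P_{pq}+P_{qp}) = a_p^2 P_{qq} + a_q^2 P_{pp}$ modulo $a_p$ and assert that $a_q$ is a non-zerodivisor on $P/a_pP$ ``because $a_1,\dots,a_4$ is a regular sequence.'' This does not follow: in a general commutative Noetherian ring, permutations (or order-preserving subsequences) of a regular sequence need not be regular sequences, and Lemma~\ref{apr29-19} only guarantees that each $a_i$ is individually regular on $P$, not that $\{a_p,a_q\}$ is a regular sequence for all $p\ne q$. For example, in $P=\kk[x,y,z]$ the sequence $a_1=x$, $a_2=y(1-x)$, $a_3=z(1-x)$ is a regular sequence of regular elements, yet $a_3$ is a zerodivisor on $P/(a_2)$ and in fact $\grade(a_2,a_3)=1$. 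The paper circumvents this by a grade argument: for fixed $\phi_1'$ with $k_1(\phi_1')$ regular, the squares $\{k_1(\phi_1)^2\}_{\phi_1\in K_1}$ generate an ideal of grade four (it contains $\mfK$ up to radical), while every associated prime of $P/(k_1(\phi_1'))$ has grade one; since the annihilator of $\overline{u(\phi_1')\cdot\a_1(\phi_1')}$ in $P/(k_1(\phi_1'))$ contains a grade-four ideal, that element must vanish. You can salvage your version in the same spirit by using \emph{all} $q\ne p$ simultaneously: the relation gives $a_q^2\, P_{pp}\in(a_p)$ for every $q\ne p$, so the ideal $\big(a_p,\{a_q^2\}_{q\ne p}\big)$, which has grade four, annihilates $\overline{P_{pp}}$ in $P/(a_p)$, forcing $\overline{P_{pp}}=0$ because the associated primes of $P/(a_p)$ have grade one. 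As written, though, your single-$q$ non-zerodivisor claim is unjustified and the deduction of $a_p\mid P_{pp}$ does not go through.
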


\begin{proof}
Let $\phi_1$ and $\phi_1'$ be elements of $K_1$. Consider the element $\Big(\a_2(\phi_1\w \phi_1')\Big)^{(2)}$ of $D_2M_2$. Observe that 
\begingroup\allowdisplaybreaks
\begin{align*}0={}&(\beta_4^{-1}\circ c_4)\Big(\Big(\a_2(\phi_1\w \phi_1')\Big)^{(2)}\Big), &&\text{by Obs.~\ref{5-20.5},}\\
{}={}&(\beta_4^{-1}\circ h_3\circ b_4)\Big(\Big(\a_2(\phi_1\w \phi_1')\Big)^{(2)}\Big),&&\text{by Obs.~\ref{5-21.2}.(\ref{5-21.2ciii}),}\\
{}={}&(\beta_4^{-1}\circ h_3)\Big((m_2\circ \a_2)(\phi_1\w \phi_1')\t \a_2(\phi_1\w \phi_1')\Big),&&\text{by (\ref{5-21.1a}),}\\
{}={}&X\Big((m_2\circ \a_2)(\phi_1\w \phi_1')\Big)\cdot \a_2(\phi_1\w \phi_1'),&&\text{by (\ref{5-20.1}),}\\
{}={}&(X\circ \a_1)\Big(k_2(\phi_1\w \phi_1')\Big)\cdot \a_2(\phi_1\w \phi_1'),&&\text{by (\ref{17.6}),}\\
{}={}&\big((m_3\circ u)(k_2(\phi_1\w \phi_1'))\big)\cdot(\a_2(\phi_1\w \phi_1')),&&\text{by Obs.~\ref{17.28},}\\
{}={}&\big(u(k_2(\phi_1\w \phi_1'))\big)\cdot((m_2\circ\a_2)(\phi_1\w \phi_1')),&&\text{by (\ref{17.5}),}\\
{}={}&\big(u(k_2(\phi_1\w \phi_1'))\big)\cdot((\a_1\circ k_2)(\phi_1\w \phi_1')),&&\text{by (\ref{17.6}).}
\end{align*}\endgroup
The differential in the Koszul complex yields
$$0=\big(k_1(\phi_1)\cdot u(\phi_1')-k_1(\phi_1')\cdot u(\phi_1)
\big)\cdot\big(k_1(\phi_1)\cdot \a_1(\phi_1')-k_1(\phi_1')\cdot \a_1(\phi_1)\big);$$
hence,
\begin{equation}
0=\begin{cases} 
\phantom{+}k_1(\phi_1)\cdot k_1(\phi_1)\cdot\big(u(\phi_1')\cdot\a_1(\phi_1')\big)\\
-k_1(\phi_1')\cdot k_1(\phi_1)\cdot\big(u(\phi_1)\cdot\a_1(\phi_1')+u(\phi_1')\cdot\a_1(\phi_1)\big)\\
+k_1(\phi_1')\cdot k_1(\phi_1')\cdot\big(u(\phi_1)\cdot\a_1(\phi_1)\big).\\
\end{cases}\label{17.30}\end{equation}
for all $\phi_1$ and $\phi_1'$ in $K_1$.

Thus,
$$(k_1(\phi_1))^2 \cdot \big(u(\phi_1')\cdot \a_1(\phi_1')\big)\subseteq (k_1(\phi_1'))M_4$$ for all
$\phi_1$ and $\phi_1'$ in $K_1$. The image of $k_1$ is an ideal in $P$ of grade $4$ and $M_4$ is isomorphic to $P$. Assume that $\phi_1'$ is an element of $K_1$ with $k_1(\phi_1')$ a regular element in $P$. In this case every associated prime of $P/(k_1(\phi_1'))$ has grade one; and therefore, $$u(\phi_1')\cdot \a_1(\phi_1')\in (k_1(\phi_1'))M_4.$$ 
Lemma~\ref{apr29-19}
guarantees that $K_1$ has a basis $\phi_{1,1}, \phi_{1,2},\phi_{1,3},\phi_{1,4}$ with the property that $k_1(\phi_{1,i})$ is a regular element of $P$ for each $i$. For each $i$, we identify
an element $v(\phi_{1,i})\in M_4$ with 
\begin{equation}\label{17.38}u(\phi_{1,i})\cdot \a_1(\phi_{1,i})= k_1(\phi_{1,i})\cdot v(\phi_{1,i}).\end{equation}
Extend $v$ to be a homomorphism $v:K_1\to M_4$. 
Take $a$ and $b$ from the set $$\{\phi_{1,1}, \phi_{1,2},\phi_{1,3}, \phi_{1,4}\}$$ and 
rewrite (\ref{17.30}) as
$$0=\begin{cases} 
+k_1(a)k_1(a)\cdot\big(k_1(b)\cdot v(b)\big)\\
-k_1(b)k_1(a)\cdot\big(u(a)\cdot\a_1(b)+u(b)\cdot\a_1(a)\big)\\
+k_1(b)k_1(b)\cdot\big(k_1(a)\cdot v(a)\big)\\
\end{cases}$$
Use the fact that $k_1(a)$ and $k_1(b)$ are regular elements of $P$ in order to  
see that
$$0=
k_1(a) \cdot v(b)
-\big(u(a)\cdot\a_1(b)+u(b)\cdot\a_1(a)\big)
+k_1(b)\cdot v(a).
$$
In other words,
\begin{align*}
u(a)\cdot\a_1(b)+u(b)\cdot\a_1(a)&=k_1(a) \cdot v(b)+k_1(b)\cdot v(a)\\
&=-m_4(v(b))\cdot \a_1(a)-m_4(v(a))\cdot \a_1(b),
\end{align*}
for all $a,b\in \{\phi_{1,1}, \phi_{1,2},\phi_{1,3},\phi_{1,4}\}$. (The last equality uses the product rule of (\ref{17.5}) and the equality $m_1\circ \a_1=k_1$ of the Commutative Diagram (\ref{17.6}).) Similarly, we deduce directly  from (\ref{17.38}) that
$$u(a)\cdot \a_1(a)=-m_4(v(a))\cdot \a_1(a),$$ for $a\in  \{\phi_{1,1}, \phi_{1,2},\phi_{1,3},\phi_{1,4}\}$. Thus,
\begin{equation}\label{17.39}\begin{array}{rl}
(u+m_4\circ v)(a)\cdot \a_1(a)=0\phantom{,}&\text{and}\\
(u+m_4\circ v)(a)\cdot \a_1(b)+(u+m_4\circ v)(b)\cdot \a_1(a)=0,\end{array}\end{equation}for all $a,b\in \{\phi_{1,1}, \phi_{1,2},\phi_{1,3},\phi_{1,4}\}$. It now follows that (\ref{17.39}) holds for all $a$ and $b$ in 
$K_1$. 
\end{proof}

\begin{definition}
\label{17.35}
 Adopt the notation of {\rm \ref{data17}} and  {\rm\ref{17.42}} and let $u'$ be the homomorphism of {\rm(\ref{17.37})}. 
Define \begin{equation}\notag U:M_1\to M_3\end{equation} by
$$\begin{cases}U(\a_1(\phi_1))=u'(\phi_1),&\text{if $\phi_1\in K_1$,}\\
U(\theta_{1,2})\cdot \a_1(\phi_1) = -u'(\phi_1)\cdot \theta_{1,2},&\text{if $\theta_{1,2}\in M_{1,2}$ and $\phi_1\in K_1$, and}\\
U(\theta_{1,2})\cdot M_{1,2}=0&\text{if $\theta_{1,2}\in M_{1,2}$.}\end{cases}$$\end{definition}

\begin{remarks}
\begin{enumerate}[\rm(a)]
\item Notice that \begin{equation}\label{17.33}U(\theta_1)\cdot \theta_1'+U(\theta_1')\cdot \theta_1=0,\end{equation}
for all $\theta_1,\theta_1'\in M_1$.
\item Recall from  {\rm\ref{17.42}}  that $M_1=M_{1,1}\p M_{1,2}$ and $\a_1:K_1\to M_{1,1}$ is an isomorphism. It follows that $U$ is a well-defined homomorphism on all of $M_1$.
\end{enumerate}
\end{remarks}

\begin{chunk}
\label{17.32} {\it Proof of Lemma~{\rm\ref{17.34}}.}
Let $U$ be the homomorphism of Definition~\ref{17.35} and let
 \begin{equation}\label{17.36}X'=X-m_3\circ U.\end{equation}
We prove that
the homomorphism  $X'$ of {\rm(\ref{17.36})} satisfies hypotheses {\rm\ref{17.9}.(\ref{17.9.a})}, {\rm\ref{17.9}.(\ref{17.9.b})}, and {\rm\ref{17.9}.(\ref{17.9.c})}.
 Hypothesis \ref{17.9}.(\ref{17.9.b}) holds because $$m_2\circ X'=
m_2\circ (X-m_3\circ U)=
m_2\circ X.$$
Hypothesis \ref{17.9}.(\ref{17.9.c}) holds because
\begin{align*}&\Big(m_3\circ U\circ m_2+m_3\circ (m_3\circ U)^\dagger\Big)(\theta_2)\cdot \theta_2'\\
{}={}&U(m_2\theta_2)\cdot m_2(\theta_2')+U(m_2\theta_2')\cdot m_2(\theta_2)=0
.\end{align*}(The first equality uses (\ref{17.5}), (\ref{17.8}), and the graded-commutativity of $M$; the second uses (\ref{17.33}).) It follows that  
\begin{align*}X'\circ m_2+m_3\circ (X')^\dagger&{}= 
(X-m_3\circ U)\circ m_2+m_3\circ (X-m_3\circ U)^\dagger\\&{}=
X\circ m_2+m_3\circ X^\dagger.\end{align*}
Hypothesis \ref{17.9}.(\ref{17.9.a}) holds because
 \begin{align*}X'\circ \a_1&{}=X\circ \a_1-m_3\circ U\circ \a_1,&&\text{by (\ref{17.36}),}
\\&{}=X\circ \a_1-m_3u',&&\text{by Def.~\ref{17.35},}
\\&{}=X\circ \a_1-m_3\circ(u+ m_4\circ v),&&\text{by (\ref{17.37}),}
\\&{}=X\circ \a_1-m_3\circ u,&&\text{because $M$ is a complex,}
\\&{}=0,&&\text{by Obs.~\ref{17.28}.}\end{align*}
This completes the proof 
 of Lemma~\ref{17.34}.  \hfil \qed
\end{chunk}

\section{The map $X$ of Theorem~\ref{17.9} exists.}\label{7}

In Lemma~\ref{17.34} we produced a map $X:M_1\to M_2$ which satisfies hypotheses \ref{17.9}.(\ref{17.9.a}), \ref{17.9}.(\ref{17.9.b}), and \ref{17.9}.(\ref{17.9.c}). In Lemma~\ref{17.43} we show the $X$ also satisfies 
\ref{17.9}.(\ref{17.9.d}), and \ref{17.9}.(\ref{17.9.e}). No further modification is needed.

\begin{lemma}
\label{17.15} Adopt the notation of {\rm \ref{data17}} and  {\rm\ref{17.42}}. If $X:M_1\to M_2$ is a homomorphism which satisfies {\rm\ref{17.9}.(\ref{17.9.a})}, then $$\ker m_3 \cap \im X^\dagger =0.$$\end{lemma}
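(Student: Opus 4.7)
The plan is to combine Lemma~\ref{17*.5} (which says $\ker m_3 \cap \ker \beta_3 = 0$) with the observation that hypothesis~\ref{17.9}.(\ref{17.9.a}) forces every element of $\im X^\dagger$ to lie in $\ker \beta_3$. Once that inclusion is established, intersecting with $\ker m_3$ and invoking Lemma~\ref{17*.5} finishes the argument immediately.

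Concretely, I would take $\theta_3 \in \ker m_3 \cap \im X^\dagger$ and write $\theta_3 = X^\dagger(\theta_2)$ for some $\theta_2 \in M_2$. To show $\beta_3(\theta_3) = 0$, I would pair $\beta_3(\theta_3)$ against an arbitrary $\phi_1 \in K_1$ using the defining identity~(\ref{17.2}) for $\beta_3$:
$$[\beta_3(X^\dagger(\theta_2)) \wedge \phi_1]_K = [X^\dagger(\theta_2) \cdot \a_1(\phi_1)]_M.$$
Now apply the definition of the adjoint $X^\dagger$ from \ref{17.8}, which rewrites the right-hand side as $[\theta_2 \cdot X(\a_1(\phi_1))]_M$. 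By hypothesis~\ref{17.9}.(\ref{17.9.a}) the factor $X(\a_1(\phi_1))$ is zero, so the whole expression vanishes for every $\phi_1 \in K_1$.

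Since $K$ is a Poincar\'e duality algebra (see \ref{17.42}), the map $K_3 \to \Hom_P(K_1, P)$ given by $\phi_3 \mapsto [\phi_3 \wedge -]_K$ is an isomorphism; thus the vanishing of $[\beta_3(\theta_3) \wedge \phi_1]_K$ for all $\phi_1 \in K_1$ forces $\beta_3(\theta_3) = 0$. Combined with the assumption $\theta_3 \in \ker m_3$, Lemma~\ref{17*.5} yields $\theta_3 = 0$, which proves $\ker m_3 \cap \im X^\dagger = 0$.

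I do not anticipate any real obstacle: the argument is a direct unwinding of definitions, and every ingredient (the adjoint formula, the defining property of $\beta_3$, Poincar\'e duality in $K$, and Lemma~\ref{17*.5}) is already in place. The only mild care needed is to remember that hypothesis~\ref{17.9}.(\ref{17.9.a}) is exactly what makes $X$ vanish on $\im \a_1$, which is precisely what one needs to kill the relevant pairing.
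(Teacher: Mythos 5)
Your proof is correct, but it takes a genuinely different route from the paper's. The paper proves the lemma directly: using acyclicity of $M$ it replaces $\ker m_3$ by $\im m_4$, writes the intersection element as $m_4(\theta_4) = X^\dagger(\theta_2)$, and then shows the grade-four ideal $\im k_1$ annihilates $\theta_4 \in M_4 \cong P$ by computing $-k_1(\phi_1)\cdot\theta_4 = \theta_2\cdot(X\circ\a_1)(\phi_1) = 0$ via~(\ref{17.5}), (\ref{17.6}), \ref{17.8}, and~\ref{17.9}.(\ref{17.9.a}); hence $\theta_4 = 0$. You instead first establish $\im X^\dagger \subseteq \ker\beta_3$ by pairing $\beta_3(X^\dagger(\theta_2))$ against $K_1$, which is precisely the content of Observation~\ref{17*.2}.(\ref{17*.2.a}) appearing later in the paper (and which does rely only on~\ref{17.9}.(\ref{17.9.a}), so there is no circularity), and then you invoke Lemma~\ref{17*.5}. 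Both arguments ultimately rest on acyclicity of $M$ and the $[\cdot]_K$, $[\cdot]_M$ duality pairings; yours routes through Lemma~\ref{17*.5}, which the paper proved anyway, and isolates the reusable fact $\beta_3\circ X^\dagger = 0$, whereas the paper's version is a single self-contained computation that avoids invoking Lemma~\ref{17*.5}. Your proof is slightly more modular; the paper's is slightly more direct.
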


\begin{proof}The complex $M$ is acyclic; so it suffices to prove that $$\im m_4\cap \im X^\dagger =0.$$ Suppose that $\theta_4\in M_4$ and $\theta_2\in M_2$ with
\begin{equation}\label{17.16}m_4(\theta_4)=X^\dagger (\theta_2).\end{equation} Let $\phi_1$ be an arbitrary element of $K_1$. Observe that
\begin{align*}-k_1(\phi_1)\cdot \theta_4{}={}&-m_1(\a_1(\phi_1))\cdot \theta_4,&&\text{by   (\ref{17.6}),}\\
{}={}&m_4(\theta_4)\cdot \a_1(\phi_1),&&\text{by (\ref{17.5}),}\\
{}={}& X^\dagger (\theta_2)\cdot \a_1(\phi_1),&&\text{by (\ref{17.16}),}\\{}={}&
\theta_2\cdot (X\circ\a_1)(\phi_1),&&\text{by (\ref{17.8}),}\\
{}={}&0,&&\text{by \ref{17.9}.(\ref{17.9.a}).}\end{align*}
Thus, the ideal $\im k_1$, which has positive grade, annihilates the element of $\theta_4$ of $M_4$. Recall that the module $M_4$ is isomorphic to $P$. It follows that $\theta_4$ is zero.
\end{proof}

\begin{lemma}
\label{17.43}If $X:M_1\to M_2$ is a homomorphism which satisfies {\rm\ref{17.9}.(\ref{17.9.a})}, 
{\rm\ref{17.9}.(\ref{17.9.b})}, and {\rm\ref{17.9}.(\ref{17.9.c})}, then $X$ also satisfies
{\rm\ref{17.9}.(\ref{17.9.d})} and {\rm\ref{17.9}.(\ref{17.9.e})}.
\end{lemma}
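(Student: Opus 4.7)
The plan is to establish (\ref{17.9.e}) first and then derive (\ref{17.9.d}) from it. In both parts the key tool is Lemma~\ref{17.15}, which (using only hypothesis (\ref{17.9.a})) asserts that $\ker m_3\cap \im X^\dagger=0$, so that any element in $\im X^\dagger$ killed by $m_3$ must itself vanish. So the common strategy is: recognize that the element in question lies in $\im X^\dagger$, and show that $m_3$ sends it to zero.

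For (\ref{17.9.e}) I would compute $m_3\circ X^\dagger\circ \a_2$ directly using (\ref{17.9.c}): since $m_3\circ X^\dagger = \beta_0(1)\cdot \id_{M_2} - \a_2\circ \beta_2 - X\circ m_2$, composing with $\a_2$ on the right produces three terms. Observation~\ref{17.3}.(\ref{17.3.a}) evaluates $\beta_2\circ \a_2 = \beta_0(1)\cdot \id_{K_2}$, the commutative diagram (\ref{17.6}) rewrites $m_2\circ \a_2 = \a_1\circ k_2$, and hypothesis (\ref{17.9.a}) kills the resulting $X\circ\a_1\circ k_2$. The surviving terms cancel, so $m_3\circ X^\dagger\circ \a_2 = 0$. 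Since $X^\dagger\circ\a_2$ lands in $\im X^\dagger\cap \ker m_3$, Lemma~\ref{17.15} yields (\ref{17.9.e}).

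Now comes the crucial step, which I would isolate as an intermediate claim: $\beta_2\circ X = 0$. For any $\phi_2\in K_2$ and $\theta_1\in M_1$, the adjoint definition in~\ref{17.8} gives $X^\dagger(\a_2(\phi_2))\cdot \theta_1 = \a_2(\phi_2)\cdot X(\theta_1)$, and the left side vanishes by (\ref{17.9.e}) just proved. Combining this with graded commutativity and (\ref{17.2}) yields
\[
[\beta_2(X(\theta_1))\w \phi_2]_K = [X(\theta_1)\cdot \a_2(\phi_2)]_M = 0
\]
for every $\phi_2\in K_2$. Since the multiplication $K_2\t_P K_2\to K_4$ is a perfect pairing (Poincar\'e duality of the Koszul complex), this forces $\beta_2(X(\theta_1))=0$ for every $\theta_1$.

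Finally I would compute $m_3\circ X^\dagger\circ X$ by evaluating (\ref{17.9.c}) at $X(\theta_1)$. Hypothesis (\ref{17.9.b}) followed by (\ref{17.9.a}) gives $X\circ m_2\circ X = \beta_0(1)\cdot X$, so the two $\beta_0(1)\cdot X$ contributions cancel and one gets $m_3\circ X^\dagger\circ X = -\a_2\circ \beta_2\circ X$, which is zero by the intermediate claim. A second appeal to Lemma~\ref{17.15} (noting $X^\dagger\circ X$ lies in $\im X^\dagger$) gives (\ref{17.9.d}). I expect the only non-mechanical ingredient to be the upgrade from the pointwise vanishing $X^\dagger\circ \a_2=0$ to the global identity $\beta_2\circ X=0$; that is where the adjoint structure and Poincar\'e duality on $K$ do the real work, and once it is in hand, the proofs of both (\ref{17.9.d}) and (\ref{17.9.e}) reduce to instances of Lemma~\ref{17.15}.
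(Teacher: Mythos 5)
Your proposal is correct and follows essentially the same path as the paper: prove (\ref{17.9.e}) by composing (\ref{17.9.c}) with $\a_2$ and invoking Lemma~\ref{17.15}, then prove (\ref{17.9.d}) by composing (\ref{17.9.c}) with $X$, cancelling the $\beta_0(1)\cdot X$ terms via (\ref{17.9.b}) and (\ref{17.9.a}), and invoking Lemma~\ref{17.15} again. The only cosmetic difference is in the middle step: you prove the stronger identity $\beta_2\circ X=0$ using Poincar\'e duality on $K$ (which is precisely what the paper records later as Observation~\ref{17*.2}.(\ref{17*.2.b})), whereas the paper's proof of Lemma~\ref{17.43} stops at the weaker statement $\a_2\circ\beta_2\circ X=0$, obtained by the identical chain of equalities but read off using Poincar\'e duality on $M$; both versions suffice and the underlying computation is the same.
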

\begin{proof} We first prove \ref{17.9}.(\ref{17.9.e}). 
Consider $\a_2$ followed by 
 \ref{17.9}.(\ref{17.9.c}): 
\begin{equation}\label{17.17}X\circ m_2\circ\a_2+ m_3\circ X^\dagger\circ \a_2=(\beta_0(1)\cdot \id_{M_2}-\a_2\circ \beta_2)\circ \a_2.\end{equation} The right side of (\ref{17.17})
is zero by Observation~\ref{17.3}.(\ref{17.3.a}); the composition ${X\circ m_2\circ\a_2}$ is equal to 
$$X\circ \a_1\circ k_2=0$$ by (\ref{17.6}) and \ref{17.9}.(\ref{17.9.a}). Thus, 
equation (\ref{17.17}) yields 
$$m_3\circ X^\dagger\circ \a_2=0;$$ and therefore, $$\im (X^\dagger\circ \a_2)\subseteq \ker m_3 \cap \im X^\dagger=0$$ by Lemma~\ref{17.15}. This establishes \ref{17.9}.(\ref{17.9.e}).

Now we prove \ref{17.9}.(\ref{17.9.d}). Consider $X$ followed by 
 \ref{17.9}.(\ref{17.9.c}):
\begin{equation}\label{17.18}X\circ m_2\circ X+ m_3\circ X^\dagger\circ X=\beta_0(1)\cdot \id_{M_2}\circ X-\a_2\circ \beta_2\circ X.\end{equation}
Observe  that $$\alpha_2\circ\beta_2\circ X=0.$$Indeed,
\begin{align*}&[\theta_2\cdot (\alpha_2\circ\beta_2)(X(\theta_1))]_M\\
{}={}& [\beta_2(\theta_2)\wedge \beta_2(X(\theta_1))]_K,&&\text{by (\ref{17.2})},\\
{}={}& [\beta_2(X(\theta_1))\wedge \beta_2(\theta_2)]_K,&&\text{because $K$ is graded-commutative,}\\
{}={}& [X(\theta_1)\cdot (\alpha_2\circ \beta_2)(\theta_2)]_M,&&\text{by (\ref{17.2})},\\
{}={}& [(\alpha_2\circ \beta_2)(\theta_2)\cdot X(\theta_1)]_M,&&\text{because $M$ is graded-commutative,}\\
{}={}&[(X^\dagger \circ\alpha_2\circ \beta_2)(\theta_2)\cdot \theta_1]_M,&&\text{by (\ref{17.8}),}\\
{}={}&0,&&\text{by \ref{17.9}.(\ref{17.9.e})}.\end{align*}
Apply {\rm\ref{17.9}.(\ref{17.9.b})} and  {\rm\ref{17.9}.(\ref{17.9.a})} to see that
$$X\circ m_2\circ X=X\circ (\beta_0(1)\cdot \id_{M_1}-\alpha_1\circ \beta_1)=\beta_0(1)\cdot X.$$
Thus, equation (\ref{17.18}) is 
$$\beta_0(1)\cdot X+ m_3\circ X^\dagger\circ X=\beta_0(1)\cdot X$$or $m_3\circ X^\dagger\circ X=0$. 
It follows that $$\im (X^\dagger\circ X)\subseteq \ker m_3\cap \im X^\dagger=0$$ by Lemma~\ref{17.15}. This establishes \ref{17.9}.(\ref{17.9.d}).
\end{proof}

\begin{theorem}
\label{17.11} Adopt the language of {\rm\ref{17.42}}. Then there exists a map $$X:M_1\to M_2$$ such that the hypotheses of Theorem~{\rm\ref{17.9}} hold.
\end{theorem}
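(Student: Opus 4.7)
The plan is to simply assemble the three lemmas proved in Sections~\ref{5}, \ref{6}, and \ref{7}, which together carry out all the work needed for this theorem. No new construction is required at this stage; the theorem is essentially the culmination of the preceding development, and the real content has already been extracted in Lemmas~\ref{17.40}, \ref{17.34}, and \ref{17.43}.

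The first step is to invoke Lemma~\ref{17.40} to obtain an initial homomorphism $X_0 : M_1 \to M_2$ satisfying conditions \ref{17.9}.(\ref{17.9.b}) and \ref{17.9}.(\ref{17.9.c}). This map was built from the formal complex $B$, the map of complexes $c : B \to K$, and a null homotopy $\{h_i\}$ whose existence is guaranteed by the acyclicity of $K$. The second step is to apply Lemma~\ref{17.34}, which produces a homomorphism $U : M_1 \to M_3$ such that $X_1 := X_0 - m_3 \circ U$ satisfies all three of \ref{17.9}.(\ref{17.9.a}), \ref{17.9}.(\ref{17.9.b}), and \ref{17.9}.(\ref{17.9.c}). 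Condition (\ref{17.9.b}) is preserved because $m_2 \circ m_3 = 0$, and condition (\ref{17.9.c}) is preserved because the contributions of $U$ and $U^\dagger$ cancel by the graded-commutativity identity (\ref{17.33}).

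The third and final step is to apply Lemma~\ref{17.43}, which shows that any homomorphism satisfying \ref{17.9}.(\ref{17.9.a}), \ref{17.9}.(\ref{17.9.b}), and \ref{17.9}.(\ref{17.9.c}) automatically satisfies \ref{17.9}.(\ref{17.9.d}) and \ref{17.9}.(\ref{17.9.e}). Setting $X := X_1$ then produces a map satisfying all five hypotheses of Theorem~\ref{17.9}.

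The main obstacle in this entire line of argument lies not in Theorem~\ref{17.11} itself but in Lemma~\ref{17.29}, where one must use the assumption that $\im k_1$ is generated by a regular sequence of length four (together with Lemma~\ref{apr29-19} to guarantee the existence of a basis of $K_1$ whose images under $k_1$ are regular on $P$) in order to solve for the correction term $v : K_1 \to M_4$. Once that technical hurdle is cleared, Theorem~\ref{17.11} is a direct consequence of stitching together the three lemmas.
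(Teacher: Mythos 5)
Your proposal matches the paper's argument exactly: the paper's proof of Theorem~\ref{17.11} is simply ``Apply Lemma~\ref{17.34} followed by Lemma~\ref{17.43},'' and you have unfolded the same chain (Lemma~\ref{17.40} feeding into Lemma~\ref{17.34}, then Lemma~\ref{17.43} to supply conditions (d) and (e)) in slightly more detail. Your remark correctly identifies Lemma~\ref{17.29}, using the regular sequence hypothesis via Lemma~\ref{apr29-19}, as where the real technical work happens.
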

\begin{proof}
Apply Lemma~\ref{17.34}
followed by Lemma~\ref{17.43}. \end{proof}

\section{Further properties of $X$.}\label{PC2}

We continue Section~\ref{PC}. Now that we have proven that the map $X$ of Theorem~\ref{17.9} exists, 
 we deduce further properties of $X$. 
These formulas, together with those of Section~\ref{PC},
provide
the   proofs of 
Theorems~\ref{17.9} and \ref{thm17}. There are many of these formulas; but each proof is straightforward. 

\begin{observation}\label{17*.2} The map $X$ of Theorem~{\rm\ref{17.9}} satisfies the following identities{\rm:}

\begin{enumerate} [\rm(a)]\item\label{17*.2.a} $\beta_3\circ X^\dagger=0$,

\item\label{17*.2.b} $\beta_2\circ X=0$,

\item\label{17*.2.c}$w_3\circ X^\dagger=0$,

\item\label{18.H} 
$\im X^\dagger\subseteq M_{3,2}$\,,  and

\item\label{17*.2.d} $X^\dagger\circ m_3+\a_3\circ \beta_3=\beta_0(1)\cdot \id_{M_3}$.
\end{enumerate}\end{observation}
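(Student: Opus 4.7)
The plan is to prove the five identities (a)--(e) in turn, each as a short calculation combining the adjoint rule (\ref{17.8}), the defining formula (\ref{17.2}) for $\beta$, the graded product rule (\ref{17.5}), the Poincar\'e duality pairings on $M$ and $K$, and the hypotheses \ref{17.9}.(\ref{17.9.a})--\ref{17.9}.(\ref{17.9.e}) that $X$ is now known to satisfy. I expect no serious obstacle, just bookkeeping.

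I would begin with (d) as a warm-up. By (\ref{5-23.1}) and (\ref{17.46}), $\a_1(K_1)=M_{1,1}$, so every element of $M_{1,1}$ has the form $\a_1(\phi_1)$. For $\theta_2\in M_2$, the adjoint rule (\ref{17.8}) gives
$X^\dagger(\theta_2)\cdot \a_1(\phi_1)=\theta_2\cdot (X\circ \a_1)(\phi_1)=0$
by \ref{17.9}.(\ref{17.9.a}). Thus $X^\dagger(\theta_2)$ annihilates $M_{1,1}$ and lies in $M_{3,2}$ by \ref{17.7}. Identity (c) is equally immediate: by definition of $w_3$ in \ref{17.8'} and then (\ref{17.8}), $(w_3\circ X^\dagger)(\theta_2)=X^\dagger(\theta_2)\cdot \a_1(\sigma)=\theta_2\cdot (X\circ \a_1)(\sigma)=0$.

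For (a) and (b) I would exploit the Poincar\'e duality of $K$. For (a), (\ref{17.2}) with $i=3$ gives, for $\phi_1\in K_1$,
$[\beta_3(X^\dagger(\theta_2))\w \phi_1]_K=[X^\dagger(\theta_2)\cdot \a_1(\phi_1)]_M=[\theta_2\cdot (X\circ\a_1)(\phi_1)]_M=0$,
and since the pairing $K_3\t_P K_1\to K_4$ is perfect, $\beta_3\circ X^\dagger=0$. For (b), (\ref{17.2}) with $i=2$ gives $[\beta_2(X(\theta_1))\w \phi_2]_K=[X(\theta_1)\cdot \a_2(\phi_2)]_M$; graded commutativity in $M$ and (\ref{17.8}) rewrite the right-hand side as $[X^\dagger(\a_2(\phi_2))\cdot \theta_1]_M$, which vanishes by \ref{17.9}.(\ref{17.9.e}). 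The perfectness of the pairing $K_2\t_P K_2\to K_4$ then yields $\beta_2\circ X=0$.

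The main computation is (e), which is essentially dual to \ref{17.9}.(\ref{17.9.c}). For $\theta_3\in M_3$ and $\theta_1\in M_1$, (\ref{17.8}) and then (\ref{17.5}) give
$(X^\dagger\circ m_3)(\theta_3)\cdot \theta_1=m_3(\theta_3)\cdot X(\theta_1)=\theta_3\cdot (m_2\circ X)(\theta_1)$.
Substituting \ref{17.9}.(\ref{17.9.b}) rewrites this as
$\beta_0(1)\cdot \theta_3\cdot \theta_1-\theta_3\cdot (\a_1\circ\beta_1)(\theta_1)$,
and Observation~\ref{17.3}.(\ref{17.3.b}) with $i=3$ converts the second term to $(\a_3\circ\beta_3)(\theta_3)\cdot \theta_1$. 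Thus $\bigl((X^\dagger\circ m_3+\a_3\circ\beta_3)(\theta_3)-\beta_0(1)\cdot \theta_3\bigr)\cdot \theta_1=0$ for every $\theta_1\in M_1$, and the perfect pairing $M_3\t_P M_1\to M_4$ from \ref{data17}.(\ref{data17.a}) lets us cancel $\theta_1$ to conclude (e). The only mild subtlety is remembering the sign bookkeeping in (\ref{17.5}) when moving the differential from $m_3$ to $m_2$, and the use of (\ref{17.8}) in both directions (for (a), (c), (d)) versus combined with graded commutativity (for (b)); apart from that, each verification is a one-line chain of equalities.
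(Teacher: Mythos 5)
Your proof is correct and follows essentially the same line as the paper's: for (a), (b), and (e) you use the Poincar\'e-duality pairings to reduce to a one-line chain of equalities using (\ref{17.2}), (\ref{17.8}), (\ref{17.5}), graded commutativity, Observation~\ref{17.3}.(\ref{17.3.b}), and hypotheses \ref{17.9}.(\ref{17.9.a}), (\ref{17.9.b}), (\ref{17.9.e}); for (c) and (d) you pass directly through the defining adjunction and hypothesis \ref{17.9}.(\ref{17.9.a}). Apart from reordering the items and being slightly more explicit about invoking the perfect pairings, your argument matches the paper's proof step for step.
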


\begin{proof} (\ref{17*.2.a}) Use (\ref{17.2}), (\ref{17.8}), and Hypothesis \ref{17.9}.(\ref{17.9.a}) to see that  $$[(\beta_3\circ X^\dagger)(\theta_2)\wedge \phi_1]_K=[X^\dagger(\theta_2)\cdot \a_1(\phi_1)]_M=[\theta_2\cdot (X\circ \a_1)(\phi_1)]_M=0.$$

\ms\noindent(\ref{17*.2.b})  Use (\ref{17.2}), the graded-commutativity of $M$, (\ref{17.8}), and Hypothesis \ref{17.9}.(\ref{17.9.e}) to see that 

\begin{align*}[(\beta_2\circ X)(\theta_1)\w \phi_2]_K&{}=[X(\theta_1)\cdot \a_2(\phi_2)]_M
=[\a_2(\phi_2)\cdot X(\theta_1)]_M\\
&{}=[X^\dagger(\a_2(\phi_2))\cdot \theta_1]_M=0.\end{align*}

\ms\noindent(\ref{17*.2.c}) Apply Definition~\ref{17.8'}, (\ref{17.8}), and Hypothesis \ref{17.9}.(\ref{17.9.a}) to see that $$(w_3\circ X^\dagger)(\theta_2)=(X^\dagger(\theta_2))\cdot \a_1(\sigma)=\theta_2\cdot X(\a_1(\sigma))=0.$$

\ms\noindent(\ref{18.H}) It suffices to show that $X^\dagger(\theta_2)\cdot \a_1(\phi_1)=0$ and this is obvious from the definition of $^\dagger$ and \ref{17.9}.(\ref{17.9.a}) as shown in the proof of (\ref{17*.2.c}).

\ms\noindent(\ref{17*.2.d}) Observe that \begin{align*}
(X^\dagger\circ m_3)(\theta_3)\cdot \theta_1&{}=m_3(\theta_3)\cdot X(\theta_1),
&&\text{by (\ref{17.8}),}
\\&{}=\theta_3\cdot (m_2\circ X)(\theta_1),&&\text{by (\ref{17.5}),}\\
&{}=\theta_3\cdot\big(\beta_0(1)\cdot \theta_1-(\a_1\circ\beta_1)(\theta_1)\big),&&\text{by \ref{17.9}.(\ref{17.9.b}),}
\\&{}=\big(\beta_0(1)\cdot \id_{M_3}-\a_3\circ\beta_3\big)(\theta_3)\cdot \theta_1,
&&\text{by \ref{17.3}.(\ref{17.3.b}).}
\end{align*}
\end{proof}

\begin{lemma}\label{17*.5'} In the language of Definition~{\rm\ref{17.42}} and Theorem~{\rm\ref{17.9}}, the following identities hold{\rm:}
\begin{enumerate}[\rm(a)]
\item\label{17*.5'.a} $w_2\circ X=X^\dagger\circ w_1$,
\item\label{17*.5'.b} $Y\circ w_1|_{M_{1,2}}=0$, and 
\item\label{17*.5'.c} $(\beta_2\circ w_1+Y\circ X)|_{M_{1,2}}=0$,  
\item\label{18.K}
$w_1\circ \proj_{M_{1,2}}\circ m_2+\a_2\circ Y=w_1\circ m_2$, 
\item\label{18.J} $W\circ \beta_2+m_3\circ \proj_{M_{3,2}}\circ w_2=m_3\circ w_2$, 
\item\label{18.O}
$\proj_{M_{3,2}}\circ w_2\circ W=0$,   
\item\label{18.P} $\proj_{M_{3,2}}\circ(X^\dagger\circ W+ w_2\circ \a_2 )=0$, and
\item\label{18.M}$\beta_2\circ W-Y\circ \a_2=k_1(\sigma)\cdot \id_{K_2}$.
\end{enumerate}
\end{lemma}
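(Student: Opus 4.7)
The plan is to establish all eight identities by direct calculation, drawing on Observations~\ref{17.3}, \ref{17*.2}, \ref{17*.3}, and \ref{17*.4}, Lemma~\ref{17*.5}, the five defining properties of $X$ from Theorem~\ref{17.9}, and the splittings $M_1=M_{1,1}\oplus M_{1,2}$ and $M_3=M_{3,1}\oplus M_{3,2}$. Identities (b)--(h) are routine unpackings of the definitions of $Y$ and $W$; identity (a) is the genuine obstacle and I would save it for last.

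For (b)--(h) the strategy is uniform: substitute $Y=z_1\circ(\proj_{M_{1,1}}\circ\a_1)^{-1}\circ\proj_{M_{1,1}}\circ m_2$ and $W=m_3\circ(\beta_3|_{M_{3,1}})^{-1}\circ z_2$ and simplify.  In (b) the product rule gives $m_2(\theta_{1,2}\cdot\a_1(\sigma))=m_1(\theta_{1,2})\a_1(\sigma)-k_1(\sigma)\theta_{1,2}$; the second summand is killed by $\proj_{M_{1,1}}$, and the first pulls back along $(\proj_{M_{1,1}}\circ\a_1)^{-1}$ to $m_1(\theta_{1,2})\sigma\in K_1$, which vanishes under $(-)\wedge\sigma$. (c) expands $m_2\circ X$ by \ref{17.9}.(\ref{17.9.b}) and identifies $\beta_2(w_1(\theta_{1,2}))=\beta_1(\theta_{1,2})\wedge\sigma$ via Observation~\ref{17.3}.(\ref{17.3.c}). (d) uses the algebra-map identity $\a_2(\phi_1\wedge\sigma)=\a_1(\phi_1)\cdot\a_1(\sigma)=w_1(\a_1(\phi_1))$ together with $\a_1\circ(\proj_{M_{1,1}}\circ\a_1)^{-1}=\id_{M_{1,1}}$. (e) combines Observation~\ref{17*.4} ($z_2\circ\beta_2=\beta_3\circ w_2$) with Observation~\ref{17.3}.(\ref{18.I}) to reduce the composition to $\proj_{M_{3,1}}\circ w_2$. (f) applies Observation~\ref{17*.3}.(\ref{17*.3.b}) to replace $w_2\circ m_3$ by $m_4\circ w_3+k_1(\sigma)\cdot\id_{M_3}$, kills the $w_3$-term by Observation~\ref{17.3}.(\ref{18.N}), and observes that the surviving scalar multiple of $(\beta_3|_{M_{3,1}})^{-1}(\phi_2\wedge\sigma)$ lies in $M_{3,1}$. (g) rewrites $X^\dagger\circ W$ via Observation~\ref{17*.2}.(\ref{17*.2.d}) as $\beta_0(1)\cdot(\beta_3|_{M_{3,1}})^{-1}\circ z_2-\a_3\circ z_2$ (the first summand lies in $M_{3,1}$, so $\proj_{M_{3,2}}$ kills it), identifies $w_2\circ\a_2=\a_3\circ z_2$ via the algebra structure of $\a$, and observes that the remaining $M_{3,2}$-components cancel. (h) uses $\beta_3\circ(\beta_3|_{M_{3,1}})^{-1}=\id_{K_3}$, the commutative diagram~(\ref{17.6}), and the Koszul product rule $k_3(\phi_2\wedge\sigma)=k_2(\phi_2)\wedge\sigma+k_1(\sigma)\phi_2$.

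Identity (a), $w_2\circ X=X^\dagger\circ w_1$, is the main difficulty because neither side reduces directly to the other.  My strategy is to show that $\Delta:=X^\dagger\circ w_1-w_2\circ X:M_1\to M_3$ lies in $\ker m_3\cap\ker\beta_3$, which is $0$ by Lemma~\ref{17*.5}. Vanishing of $\beta_3\circ\Delta$ is immediate: $\beta_3\circ X^\dagger=0$ by Observation~\ref{17*.2}.(\ref{17*.2.a}), while $\beta_3\circ w_2=z_2\circ\beta_2$ by Observation~\ref{17*.4} and then $\beta_2\circ X=0$ by Observation~\ref{17*.2}.(\ref{17*.2.b}). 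For $m_3\circ\Delta=0$, I apply \ref{17.9}.(\ref{17.9.c}) to $w_1(\theta_1)\in M_2$; expand $m_2(w_1(\theta_1))=m_1(\theta_1)\a_1(\sigma)-k_1(\sigma)\theta_1$ via Observation~\ref{17*.3}.(\ref{17*.3.b}); kill $X(\a_1(\sigma))$ by \ref{17.9}.(\ref{17.9.a}); identify $\a_2(\beta_2(w_1(\theta_1)))=\a_1(\beta_1(\theta_1))\cdot\a_1(\sigma)$ via Observation~\ref{17.3}.(\ref{17.3.c}) and the algebra-map property of $\a$; and recognize the remaining factor $\beta_0(1)\theta_1-\a_1(\beta_1(\theta_1))$ as $m_2(X(\theta_1))$ by \ref{17.9}.(\ref{17.9.b}). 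Applying the product rule to $m_3(X(\theta_1)\cdot\a_1(\sigma))=m_3(w_2(X(\theta_1)))$ introduces a $k_1(\sigma)X(\theta_1)$ term that cancels against the analogous term arising from $X\circ m_2\circ w_1$, yielding $m_3(X^\dagger(w_1(\theta_1)))=m_3(w_2(X(\theta_1)))$ and completing the argument.
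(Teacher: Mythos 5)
Your proposal is correct and tracks the paper's proof essentially step for step: parts (b)--(h) are the same routine substitutions using the definitions of $Y$ and $W$ and the cited Observations, and for (a) you use exactly the paper's strategy of showing $w_2\circ X-X^\dagger\circ w_1$ lands in $\ker m_3\cap\ker\beta_3=0$ via Lemma~\ref{17*.5}. The only (cosmetic) divergence is in the $\ker m_3$ computation of (a): you evaluate \ref{17.9}.(\ref{17.9.c}) at $w_1(\theta_1)$ and then match terms against $m_3(w_2(X(\theta_1)))$ using the graded Leibniz rule, whereas the paper starts from $m_3\circ(w_2\circ X - X^\dagger\circ w_1)$ and peels off terms with Observation~\ref{17*.3}.(\ref{17*.3.b}); the two computations use the same ingredients and are equivalent.
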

\begin{proof} (\ref{17*.5'.a}) We prove $$\im (w_2\circ X-X^\dagger\circ w_1)\subseteq (\ker m_3\cap \ker \beta_3)$$and then apply Lemma~\ref{17*.5}. Observe that 
$\beta_3\circ X^\dagger=0$ by Observation \ref{17*.2}.(\ref{17*.2.a}) and $$\beta_3\circ w_2\circ X=z_2\circ \beta_2\circ X=0$$ by Observation~\ref{17*.4} and Observation \ref{17*.2}.(\ref{17*.2.b}). It follows that  $$\im (w_2\circ X-X^\dagger\circ w_1)\subseteq  \ker \beta_3.$$
We complete the proof by showing that $\im (w_2\circ X-X^\dagger\circ w_1)\subseteq  \ker m_3$.
Observe that 
\begingroup\allowdisplaybreaks\begin{align*}
&m_3\circ(w_2\circ X-X^\dagger\circ w_1)\\{}={}&w_1\circ m_2\circ X
+(k_1(\sigma))\cdot X
-m_3\circ X^\dagger\circ w_1,
&&\text{by \ref{17*.3}.(\ref{17*.3.b}),}\\
{}={}&w_1\circ (\beta_0(1)\cdot \id_{M_1}-\a_1\circ \beta_1)+(k_1(\sigma))\cdot X-m_3\circ X^\dagger\circ w_1,
&&\text{by \ref{17.9}.(\ref{17.9.b}),}\\\intertext{(Use Observation \ref{17*.4} twice to see that 
$w_1\circ \a_1\circ \beta_1=\a_2\circ z_1\circ \beta_1=\a_2\circ \beta_2\circ w_1$.)}
{}={}&\Big(\beta_0(1)\cdot\id_{M_2}-\a_2\circ \beta_2-m_3\circ X^\dagger\Big)\circ w_1+(k_1(\sigma))\cdot X\\
{}={}&X\circ m_2\circ w_1+(k_1(\sigma))\cdot X,&&\text{by \ref{17.9}.(\ref{17.9.c}),}\\
\intertext{(Use Observation \ref{17*.3}.(\ref{17*.3.b}), again, to see that $m_2\circ w_1=w_0\circ m_1- k_1(\sigma)\cdot \id_{M_1}$.)}{}={}&X\circ w_0\circ m_1=0,&&\text{by \ref{17.9}.(\ref{17.9.a}),}
\end{align*}\endgroup
since $w_0(1)=\a_1(\sigma)$.

\ms \noindent(\ref{17*.5'.b}) If $\theta_{1,2}\in M_{1,2}$, then apply Definition~\ref{17.8'} twice to see that $$(Y\circ w_1)(\theta_{1,2})
=\big(z_1\circ (\proj_{M_{1,1}}\circ \a_1)^{-1}\circ 
\proj_{M_{1,1}}\circ m_2\big)\big(\theta_{1,2}\cdot \a_1(\sigma)\big).$$
The product rule yields that $$m_2\big(\theta_{1,2}\cdot \a_1(\sigma)\big)=
m_1(\theta_{1,2})\cdot \a_1(\sigma)- m_1(\a_1(\sigma))\cdot \theta_{1,2}.$$ The projection map $\proj_{M_{1,1}}$ acts like the identity map on $\a_1(\sigma)$ and like the zero map on 
$\theta_{1,2}$. Thus
\begin{align*}(Y\circ w_1)(\theta_{1,2})&{}=m_1(\theta_{1,2})\cdot z_1\circ (\proj_{M_{1,1}}\circ \a_1)^{-1}
(\a_1(\sigma))\\&{}=m_1(\theta_{1,2})\cdot \sigma\w \sigma=0.\end{align*}

\ms \noindent
(\ref{17*.5'.c}) Let  $\theta_{1,2}\in M_{1,2}$. According to Definition~\ref{17.8'},
\begin{align*}(Y\circ X)(\theta_{1,2})={}&\big(z_1\circ (\proj_{M_{1,1}}\circ \a_1)^{-1}\circ 
\proj_{M_{1,1}}\circ m_2\circ X\big)(\theta_{1,2}).\end{align*}
Apply Hypothesis~\ref{17.9}.(\ref{17.9.b}) to write
$$m_2\circ X(\theta_{1,2})=\beta_0(1)\cdot \theta_{1,2}
-(\a_1\circ \beta_1)(\theta_{1,2}).$$ Recall that $\proj_{M_{1,1}}$ sends $\theta_{1,2}$ to zero and acts like the identity map on the image of $\a_1$.
It follows that 
\begin{align*}(Y\circ X)(\theta_{1,2}){}={}&
-\big(z_1\circ (\proj_{M_{1,1}}\circ \a_1)^{-1}
\circ\a_1\circ \beta_1\big)(\theta_{1,2})\\{}={}&-(z_1\circ \beta_1)(\theta_{1,2})\\
{}={}&-(\beta_2\circ w_1)(\theta_{1,2}),&&\text{by Obs.~\ref{17*.4}.}\end{align*}

\ms\noindent (\ref{18.K}) Use the definition of $Y$, given in \ref{17.8'}, and the Commutative Diagram \ref{17*.4} to see that
\begin{align*}\a_2\circ Y={}&\a_2\circ z_1\circ (\proj_{M_{1,1}}\circ \a_1)^{-1}\circ 
\proj_{M_{1,1}}\circ m_2\\
={}&w_1\circ \a_1\circ (\proj_{M_{1,1}}\circ \a_1)^{-1}\circ 
\proj_{M_{1,1}}\circ m_2
.\end{align*}
The map $\a_1\circ (\proj_{M_{1,1}}\circ \a_1)^{-1}$ is the identity on $M_{1,1}$. Thus, 
\begin{equation}\a_2\circ Y=w_1\circ  
\proj_{M_{1,1}}\circ m_2.\label{Nov-18}\end{equation}

\ms\noindent 
(\ref{18.J})
The definition of $W$ is given in \ref{17.8'}. Observe that
\begin{align*}&W\circ \beta_2+m_3\circ \proj_{M_{3,2}}\circ w_2\\
{}={}&(m_3\circ(\beta_3|_{M_{3,1}})^{-1}\circ z_2)\circ \beta_2+m_3\circ \proj_{M_{3,2}}\circ w_2\\
{}={}&m_3\circ
(\beta_3|_{M_{3,1}})^{-1}\circ \beta_3
\circ w_2+m_3\circ \proj_{M_{3,2}}\circ w_2,&&\text{by Observation~\ref{17*.4},}\\ {}={}&m_3\circ(\proj_{M_{3,1}}+\proj_{M_{3,2}})\circ w_2=m_3\circ w_2,
&&\text{by Observation~\ref{17.3}.(\ref{18.I}).}
\end{align*}

\ms\noindent (\ref{18.O}) Recall the definition of $W$ from 
\ref{17.8'}. 
We calculate the value of $$\proj_{M_{3,2}}\circ w_2\circ W=\proj_{M_{3,2}}\circ w_2\circ m_3\circ(\beta_3|_{M_{3,1}})^{-1}\circ z_2.$$
Apply \ref{17*.3}.(\ref{17*.3.b}) to write
$$w_2\circ m_3= m_4\circ w_3+k_1(\sigma)\cdot \id_{M_3}.$$ Recall from Observation~\ref{17.3}.(\ref{18.N}) that $w_3\circ (\beta_3|_{M_{3,1}})^{-1}\circ z_2=0$. Observe also, that
the image of $\id_{M_3}\circ (\beta_3|_{M_{3,1}})^{-1}$ is contained in $M_{3,1}$; hence 
$$\proj_{M_{3,2}}\circ \id_{M_3}\circ (\beta_3|_{M_{3,1}})^{-1}$$ is the zero map.

\ms\noindent (\ref{18.P}) Observe that 
\begin{align*}
&\proj_{M_{3,2}}\circ(X^\dagger\circ W)\\={}&
\proj_{M_{3,2}}\circ(X^\dagger\circ m_3\circ(\beta_3|_{M_{3,1}})^{-1}\circ z_2),&&\text{by \ref{17.8'},}\\
={}&
\proj_{M_{3,2}}\circ\Big(
\big(-\a_3\circ \beta_3+\beta_0(1)\cdot \id_{M_3}\big)
\circ(\beta_3|_{M_{3,1}})^{-1}\circ z_2\Big),&&\text{by \ref{17*.2}.(\ref{17*.2.d}),}\\
={}&
-\proj_{M_{3,2}}\circ
\a_3\circ \beta_3
\circ(\beta_3|_{M_{3,1}})^{-1}\circ z_2,\end{align*}
because $\proj_{M_{3,2}}(M_{3,1})=0$.
Use the fact that $\beta_3
\circ(\beta_3|_{M_{3,1}})^{-1}=\id_{K_3}$, together with Commutative Diagram \ref{17*.4},
to see that $$\proj_{M_{3,2}}\circ
\a_3\circ \beta_3
\circ(\beta_3|_{M_{3,1}})^{-1}\circ z_2=\proj_{M_{3,2}}\circ
\a_3\circ z_2=\proj_{M_{3,2}}\circ
w_2\circ \a_2.$$

\ms \noindent (\ref{18.M})  
Observe that
\begin{align*}
\beta_2\circ W{}={}&\beta_2\circ m_3\circ(\beta_3|_{M_{3,1}})^{-1}\circ z_2, &&\text{by  \ref{17.8'},}\\
{}={}&k_3\circ \beta_3\circ(\beta_3|_{M_{3,1}})^{-1}\circ z_2, &&\text{by (\ref{17.6}),}
\\
{}={}&k_3\circ  z_2
 \end{align*} 
and 
\begin{align}
     Y\circ \a_2
{}={}&z_1\circ (\proj_{M_{1,1}}\circ \a_1)^{-1}\circ \proj_{M_{1,1}}\circ m_2\circ \a_2,
&&\text{by \ref{17.8'},}\notag
\\\notag
{}={}&z_1\circ (\proj_{M_{1,1}}\circ \a_1)^{-1}\circ \proj_{M_{1,1}}\circ \a_1\circ k_2,
&&\text{by (\ref{17.6}),}
\\
{}={}&z_1\circ  k_2.\label{nov-18-2}
 \end{align} 
Apply Observation~\ref{17*.3}.(\ref{17*.3.a}) to conclude that
$$\beta_2\circ W-Y\circ \a_2=k_3\circ  z_2-z_1\circ  k_2=k_1(\sigma)\cdot \id_{K_2}.$$
\end{proof}

\begin{lemma}
\label{17.47} In the language of Definition~{\rm\ref{17.42}} and Theorem~{\rm\ref{17.9}}, the following identities hold{\rm:}
\begin{enumerate}[\rm(a)]
\item \label{17.47.a}$(r\beta_2-Y)\circ \a_2+k_3\circ z_2=f\cdot\id_{K_2}$,
\item \label{17.47.b}
$\proj_{M_{1,2}}\circ m_2\circ (rX-w_1)|_{M_{1,2}}=f\cdot\id_{M_{1,2}}$,
\item\label{17.47.c}$ -w_3\circ m_4 +r\a_4\circ \beta_4=f\cdot\id_{M_4}$, and 
\item \label{17.47.d}
$(rX-w_1)|_{M_{1,2}}\circ \proj_{M_{1,2}}\circ m_2
+\a_2 \circ(r\beta_2-Y)+m_3\circ (rX^\dagger+w_2)=f\cdot\id_{M_2}$.
\end{enumerate}
\end{lemma}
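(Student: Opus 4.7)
The proof of all four identities follows a common recipe: decompose $f = r\beta_0(1) + k_1(\sigma)$ by (\ref{oct28}), and then rewrite each compound map using the commutative diagram (\ref{17.6}) and Observation~\ref{17*.4}, together with the graded Leibniz formulas of Observation~\ref{17*.3}, Observation~\ref{17.3}.(\ref{17.3.a}), Hypotheses \ref{17.9}.(\ref{17.9.b}) and \ref{17.9}.(\ref{17.9.c}) for $X$, and the explicit expressions for $Y \circ \a_2$ and $\a_2 \circ Y$ extracted from the proof of Lemma~\ref{17*.5'}. No new ideas beyond those already in Sections \ref{5}--\ref{7} and \ref{PC2} are needed; the only real work is careful bookkeeping of projections and signs.

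For (a), Observation~\ref{17.3}.(\ref{17.3.a}) gives $r\beta_2 \circ \a_2 = r\beta_0(1) \cdot \id_{K_2}$, equation (\ref{nov-18-2}) from the proof of Lemma~\ref{17*.5'}.(\ref{18.M}) gives $Y \circ \a_2 = z_1 \circ k_2$, and Observation~\ref{17*.3}.(\ref{17*.3.a}) at $i = 2$ yields $k_3 \circ z_2 - z_1 \circ k_2 = k_1(\sigma) \cdot \id_{K_2}$; summing and invoking (\ref{oct28}) gives $f \cdot \id_{K_2}$. For (c), Observation~\ref{17.3}.(\ref{17.3.b}) at $i = 4$ (using $\a_0 = \id$) yields $\a_4 \circ \beta_4 = \beta_0(1) \cdot \id_{M_4}$, while Observation~\ref{17*.3}.(\ref{17*.3.b}) at $i = 4$ combined with $m_5 = 0$ yields $w_3 \circ m_4 = -k_1(\sigma) \cdot \id_{M_4}$; these combine to $f \cdot \id_{M_4}$.

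For (b), I would use Hypothesis~\ref{17.9}.(\ref{17.9.b}) to write $m_2 \circ rX = r\beta_0(1) \cdot \id_{M_1} - r\a_1 \circ \beta_1$, and Observation~\ref{17*.3}.(\ref{17*.3.b}) at $i = 1$ to write $m_2 \circ w_1 = w_0 \circ m_1 - k_1(\sigma) \cdot \id_{M_1}$. Subtracting and restricting to $M_{1,2}$ gives $m_2 \circ (rX - w_1)|_{M_{1,2}} = f \cdot \id_{M_{1,2}} - r\a_1 \circ \beta_1|_{M_{1,2}} - w_0 \circ m_1|_{M_{1,2}}$. After projecting to $M_{1,2}$ the two correction terms die: the image of $\a_1$ lies in $M_{1,1}$ by (\ref{17.46}), and $w_0(m_1(\theta_{1,2})) = m_1(\theta_{1,2}) \cdot \a_1(\sigma)$ likewise sits in $\im \a_1 \subseteq M_{1,1}$.

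The most involved identity is (d), where the main obstacle is tracking signs and projections. The plan is to evaluate $\a_2 \circ (r\beta_2 - Y) + m_3 \circ (rX^\dagger + w_2)$ first. Hypothesis~\ref{17.9}.(\ref{17.9.c}) replaces $\a_2 \circ r\beta_2$ by $r\beta_0(1) \cdot \id_{M_2} - rX \circ m_2 - rm_3 \circ X^\dagger$; Lemma~\ref{17*.5'}.(\ref{18.K}) replaces $\a_2 \circ Y$ by $w_1 \circ m_2 - w_1 \circ \proj_{M_{1,2}} \circ m_2$; and Observation~\ref{17*.3}.(\ref{17*.3.b}) at $i = 2$ replaces $m_3 \circ w_2$ by $w_1 \circ m_2 + k_1(\sigma) \cdot \id_{M_2}$. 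The $rm_3 \circ X^\dagger$ terms cancel, the two $w_1 \circ m_2$ terms cancel, and what remains is $f \cdot \id_{M_2} - rX \circ m_2 + w_1 \circ \proj_{M_{1,2}} \circ m_2$. Adding the summand $(rX - w_1)|_{M_{1,2}} \circ \proj_{M_{1,2}} \circ m_2$ cancels the $w_1$-piece and contributes $rX \circ \proj_{M_{1,2}} \circ m_2$. Finally, Hypothesis~\ref{17.9}.(\ref{17.9.a}) and the identifications (\ref{5-23.1}), (\ref{17.46}) force $X$ to vanish on $M_{1,1} = \im \a_1$, so $X \circ m_2 = X \circ \proj_{M_{1,2}} \circ m_2$, and the two remaining $rX$-terms cancel, leaving exactly $f \cdot \id_{M_2}$.
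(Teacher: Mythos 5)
Your proof is correct and follows essentially the same route as the paper: parts (a), (b), and (c) match the paper's argument step for step, and in part (d) you apply Lemma~\ref{17*.5'}.(\ref{18.K}) where the paper cites its auxiliary equation (\ref{Nov-18}) directly (the two are equivalent, since (\ref{Nov-18}) is exactly what the paper uses to establish Lemma~\ref{17*.5'}.(\ref{18.K})), and you group the three summands in a slightly different order before cancelling, but the cancellations and the supporting identities invoked are the same.
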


\begin{proof}(\ref{17.47.a}) 
Recall, from Observation~\ref{17.3}.(\ref{17.3.a}), that
$\beta_2\circ \a_2=\beta_0(1)\cdot \id_{K_2}$. We calculated in (\ref{nov-18-2}) that 
$Y\circ \a_2=z_1\circ k_2$. Recall from Observation \ref{17*.3}.(\ref{17*.3.a}) that
$$-z_1\circ  k_2+k_3\circ z_2=k_1(\sigma)\cdot \id_{K_2}.$$ Use (\ref{oct28}).

\ms \noindent (\ref{17.47.b}) If $\theta_{1,2}\in M_{1,2}$, then 
$$(m_2\circ X)(\theta_{1,2})=\beta_0(1)\cdot \theta_{1,2} -(\a_1\circ \beta_1)(\theta_{1,2})$$ by Hypothesis~\ref{17.9}.(\ref{17.9.b}),
and 
$$(m_2\circ w_1)(\theta_{1,2})=m_1(\theta_{1,2})\cdot \a_1(\sigma)-k_1(\sigma)\cdot \theta_{1,2}$$ by Observation~\ref{17*.3}.(\ref{17*.3.b}).
 The projection map $\proj_{M_{1,2}}$ acts like the identity on $\theta_{1,2}$ but annihilates the image of $\a_1$. Thus, 
$$\proj_{M_{1,2}}\circ m_2\circ (rX-w_1)|_{M_{1,2}}=r\beta_0(1)\cdot \id_{M_{1,2}}+k_1(\sigma)\cdot \id_{M_{1,2}}=f\cdot \id_{M_{1,2}}.$$

\ms \noindent 
(\ref{17.47.c}) Apply 
Observation~\ref{17*.3}.(\ref{17*.3.b}) to see that $-w_3\circ m_4=k_1(\sigma)\cdot \id_{M_4}$. Let $\theta_4$ be an element of $M_4$. Notice that
$$(\a_4\circ \beta_4)(\theta_4)=1\cdot (\a_4\circ \beta_4)(\theta_4)
=(\a_0\circ\beta_0)(1)\cdot \theta_4=\beta_0(1)\cdot \theta_4,$$ by Observation~\ref{17.3}.(\ref{17.3.b}). Hence,
$$-w_3\circ m_4 +r\a_4\circ \beta_4=(k_1(\sigma)+r\beta_0(1))\cdot \id_{M_4}=f\cdot \id_{M_4}.$$

\ms \noindent 
(\ref{17.47.d}) Hypothesis~\ref{17.9}.(\ref{17.9.a}) states that $X|_{M_{1,1}}$ is identically zero; consequently, $$X|_{M_{1,2}}\circ \proj_{M_{1,2}}=X.$$
Thus, \begin{align*} 
&(rX-w_1)|_{M_{1,2}}\circ \proj_{M_{1,2}}\circ m_2
+\a_2 \circ(r\beta_2-Y)+m_3\circ (rX^\dagger+w_2)\\
{}={}&r(X\circ m_2+ \a_2 \circ\beta_2 +m_3\circ X^\dagger)  -w_1\circ \proj_{M_{1,2}}\circ m_2
-\a_2 \circ Y+m_3\circ w_2\\
{}={}&r\beta_0(1)\cdot \id_{M_2} -w_1\circ \proj_{M_{1,2}}\circ m_2
-\a_2 \circ Y+m_3\circ w_2\end{align*}
The most recent equality is due to Hypothesis~\ref{17.9}.(\ref{17.9.c}). 
Recall from (\ref{Nov-18}) that
$$\a_2 \circ Y=w_1\circ  
\proj_{M_{1,1}}\circ m_2.$$
It follows that
\begin{align*} 
&(rX-w_1)|_{M_{1,2}}\circ \proj_{M_{1,2}}\circ m_2
+\a_2 \circ(r\beta_2-Y)+m_3\circ (rX^\dagger+w_2)\\
{}={}&r\beta_0(1)\cdot \id_{M_2} -w_1\circ \proj_{M_{1,2}}\circ m_2
-\a_2 \circ Y+m_3\circ w_2\\
{}={}&r\beta_0(1)\cdot \id_{M_2} -w_1\circ \proj_{M_{1,2}}\circ m_2
-w_1\circ  
\proj_{M_{1,1}}\circ m_2+m_3\circ w_2
\\
{}={}&r\beta_0(1)\cdot \id_{M_2} -w_1\circ  m_2+m_3\circ w_2=
r\beta_0(1)\cdot \id_{M_2}+k_1(\sigma)\cdot\id_{M_2}\\
{}={}&f \cdot\id_{M_2}. \end{align*}
The penultimate equality is established in Observation \ref{17*.3}.(\ref{17*.3.b}).
\end{proof}

\section{The proof of Theorem~\ref{17.9}.}

\begin{chunk}\label{proof17.9} {\it The proof of Theorem~{\rm\ref{17.9}.}}
The proof follows quickly from the  calculations of Sections~\ref{PC} and \ref{PC2}.

\ms\noindent(M.F.\ref{19.9.1}) Observe first that
$(g_{\text{\rm even}}g_{\text{\rm odd}})_{(1,1)}$ is equal to \begin{align*}&(rX-w_1)|_{M_{1,2}}\circ \proj_{M_{1,2}}\circ m_2
+\a_2 \circ(r\beta_2-Y)+m_3\circ (rX^\dagger+w_2)\\{}={}&f\cdot\id_{M_2},\end{align*}
by Lemma \ref{17.47}.(\ref{17.47.d}).  Observe further  that
\begingroup\allowdisplaybreaks 
\begin{align*}
(g_{\text{\rm even}}g_{\text{\rm odd}})_{(1,2)}&{}=-\a_2\circ k_3+m_3\circ \a_3=0,\quad\text{by (\ref{17.6}),}\\
(g_{\text{\rm even}}g_{\text{\rm odd}})_{(1,3)}&{}=m_3\circ m_4=0,\quad\text{by (\ref{17.45}),}\\
(g_{\text{\rm even}}g_{\text{\rm odd}})_{(2,1)}&{}=
z_2\circ Y+
r(-z_2\circ \beta_2+\beta_3\circ w_2)
+r^2\beta_3\circ X^\dagger=0,\\
\intertext{by Observation \ref{17*.4} and Observation \ref{17*.2}.(\ref{17*.2.a}). The homomorphism $Y$ is defined in (\ref{17.8'}). The composition $z_2\circ Y$ is zero because
 $z_2\circ z_1=0$. Observe also that} 
(g_{\text{\rm even}}g_{\text{\rm odd}})_{(2,2)}&{}=
r\beta_3\circ \a_3+z_2\circ k_3-k_4\circ z_3=f\cdot \id_{K_3},\\\intertext{by Observation~\ref{17.3}.(\ref{17.3.a}), Observation \ref{17*.3}.(\ref{17*.3.a}), and (\ref{oct28}),}
(g_{\text{\rm even}}g_{\text{\rm odd}})_{(2,3)}&{}=r(\beta_3\circ m_4-k_4\circ \beta_4)=0,
\quad\text{by (\ref{17.6}),}\\
(g_{\text{\rm even}}g_{\text{\rm odd}})_{(3,1)}&{}=-rw_3\circ X^\dagger-w_3\circ w_4=0,
\quad\text{by \ref{17*.2}.(\ref{17*.2.c}) and \ref{17*.4},}\\
(g_{\text{\rm even}}g_{\text{\rm odd}})_{(3,2)}&{}=-w_3\circ \a_3+\a_4\circ z_3=0, \quad\text{by \ref{17*.4},}\\
(g_{\text{\rm even}}g_{\text{\rm odd}})_{(3,3)}&{}= -w_3\circ m_4 +r\a_4\circ \beta_4=f\cdot \id_{M_4},\quad\text{by \ref{17.47}.(\ref{17.47.c}),}\\
(g_{\text{\rm odd}}g_{\text{\rm even}})_{(1,1)}
&{}=\proj_{M_{1,2}}\circ m_2\circ (rX-w_1)|_{M_{1,2}}=f\cdot\id_{M_{1,2}},\quad\text{by \ref{17.47}.(\ref{17.47.b}),}\\
(g_{\text{\rm odd}}g_{\text{\rm even}})_{(1,2)}&{}=\proj_{M_{1,2}}\circ m_2\circ \a_2=\proj_{M_{1,2}}\circ \a_1 \circ k_2=0,\\\intertext{by (\ref{17.6}) and (\ref{17.46}),}
(g_{\text{\rm odd}}g_{\text{\rm even}})_{(1,3)}
&{}=\proj_{M_{1,2}}\circ m_2\circ m_3=0,\quad\text{by (\ref{17.45}),}\\
(g_{\text{\rm odd}}g_{\text{\rm even}})_{(1,4)}&{}=0\\
(g_{\text{\rm odd}}g_{\text{\rm even}})_{(2,1)}&{}=
(r^2\beta_2\circ X-r(\beta_2\circ w_1+Y\circ X)+Y\circ w_1)|_{M_{1,2}}=0,\\\intertext{by Observation \ref{17*.2}.(\ref{17*.2.b}), and items (\ref{17*.5'.c}), and   (\ref{17*.5'.b}) of Lemma~\ref{17*.5'},}
(g_{\text{\rm odd}}g_{\text{\rm even}})_{(2,2)}&{} = (r\beta_2-Y)\circ \a_2+k_3\circ z_2=f\cdot \id_{K_2}, \quad\text{by \ref{17.47}.(\ref{17.47.a}),}\\
(g_{\text{\rm odd}}g_{\text{\rm even}})_{(2,3)}&{} =r(\beta_2\circ m_3-k_3\circ \beta_3) -Y\circ m_3=0,\\\intertext{by (\ref{17.6}), \ref{17.8'}, and (\ref{17.45}),}
(g_{\text{\rm odd}}g_{\text{\rm even}})_{(2,4)}&{}=k_3\circ k_4=0,\quad\text{by (\ref{17.48})}\\
(g_{\text{\rm odd}}g_{\text{\rm even}})_{(3,1)}&{}=r^2 X^\dagger\circ X+r(w_2\circ X-X^\dagger\circ w_1)-(w_2\circ w_1)=0,\\
\intertext{by Hypothesis \ref{17.9}.(\ref{17.9.d}), Lemma~\ref{17*.5'}.(\ref{17*.5'.a}), and Observation \ref{17*.4},}
(g_{\text{\rm odd}}g_{\text{\rm even}})_{(3,2)}&{}=(rX^\dagger+w_2)\circ \a_2-\a_3\circ z_2=0,\\\intertext{by Hypothesis \ref{17.9}.(\ref{17.9.e}) and Observation \ref{17*.4},}
(g_{\text{\rm odd}}g_{\text{\rm even}})_{(3,3)}&{}=r(X^\dagger\circ m_3+\a_3\circ \beta_3)+(w_2\circ m_3-m_4\circ w_3)=f\cdot \id_{M_3},\\\intertext{
by Observation \ref{17*.2}.(\ref{17*.2.d}) and Observation \ref{17*.3}.(\ref{17*.3.b}),}
(g_{\text{\rm odd}}g_{\text{\rm even}})_{(3,4)}&{}=-\a_3\circ k_4+m_4\circ \a_4=0,\quad\text{by (\ref{17.6}),}\\
(g_{\text{\rm odd}}g_{\text{\rm even}})_{(4,1)}&{}=0,\\
(g_{\text{\rm odd}}g_{\text{\rm even}})_{(4,2)}&{}=-z_3\circ z_2 =0,\quad\text{by \ref{17*.4}}\\
(g_{\text{\rm odd}}g_{\text{\rm even}})_{(4,3)}&=r(z_3\circ \beta_3-\beta_4\circ w_3)=0,\quad\text{by \ref{17*.4}, and}\\
(g_{\text{\rm odd}}g_{\text{\rm even}})_{(4,4)}&=-z_3\circ k_4+r\beta_4\circ \a_4=f\cdot\id_{K_4},\quad\intertext{by Observation \ref{17*.3}.(\ref{17*.3.a}) and Observation \ref{17.3}.(\ref{17.3.a}).} 
\end{align*}\endgroup

\ms\noindent(M.F.\ref{19.9.2}) The product $\acute{g}_{\text{\rm even}}\acute{g}_{\text{\rm odd}}$ is equal to $rA+B+r^{-1}C$, where
\begin{align*}
A{}={}&{X\circ \proj_{M_{1,2}}}\circ m_2+\a_2\circ \beta_2+ m_3\circ\proj_{M_{3,2}}\circ X^\dagger,\\
B{}={}&-w_1\circ \proj_{M_{1,2}}\circ m_2-\a_2\circ Y
+W\circ \beta_2+m_3\circ\proj_{M_{3,2}}\circ w_2, \text{ and}\\
C{}={}&-W\circ Y.\end{align*}
Recall from Hypothesis~\ref{17.9}.(\ref{17.9.a}) and Observation~\ref{17*.2}.(\ref{18.H}) that $$XM_{1,1}=0\quad\text{and}\quad \im X^\dagger\subseteq M_{3,2}.$$ It follows that 
$X\circ \proj_{M_{1,2}}=X$ and $\proj_{M_{3,2}}\circ X^\dagger= X^\dagger$. Thus,
$$A=X\circ m_2+\a_2\circ \beta_2+ m_3\circ X^\dagger=\beta_0(1)\cdot \id_{M_2}.$$ The final equality is due to Hypothesis~\ref{17.9}.(\ref{17.9.c}).

The first two terms of $B$ add to $-w_1\circ m_2$ and the last two terms add to $m_3\circ w_2$ by items (\ref{18.K}) and 
(\ref{18.J}), respectively, of Lemma~\ref{17*.5'}. Apply Observation~\ref{17*.3}.(\ref{17*.3.b}) to conclude that $B=k_1(\sigma)\cdot \id_{M_2}$.

The composition $W\circ Y$ factors through $z_2\circ z_1=0$ (see Definition~\ref{17.8'}); hence, $C=0$ and $$\acute{g}_{\text{\rm even}}\acute{g}_{\text{\rm odd}}=(r\beta_0(1)+k_1(\sigma))\cdot \id_{M_2}=f\cdot \id_{M_2}.$$

We compute the composition $\acute{g}_{\text{\rm odd}}\acute{g}_{\text{\rm even}}$. Observe that 
$$(\acute{g}_{\text{\rm odd}}\acute{g}_{\text{\rm even}})_{1,1}= \proj_{M_{1,2}}\circ m_2\circ (rX-w_1)|_{M_{1,2}}.$$
Apply Hypothesis~\ref{17.9}.(\ref{17.9.b}) and Observation~\ref{17*.3}.(\ref{17*.3.b}) to write
\begin{align*}m_2\circ X&{}= \beta_0(1)\cdot \id_{M_1}-\a_1\circ \beta_1\quad\text{and}\\
-m_2\circ w_1&{}= -w_0\circ m_1 +k_1(\sigma)\cdot \id_{M_1}.\end{align*}
Recall, from (\ref{17.46}), that $\proj_{M_{1,2}}\circ\a_1=0$. Notice that the image of $w_0\circ m_1$ is contained in $M_{1,1}$; hence, $\proj_{M_{1,2}}\circ w_0\circ m_1=0$. Thus,
$$(\acute{g}_{\text{\rm odd}}\acute{g}_{\text{\rm even}})_{1,1}=(r\beta_0(1)+k_1(\sigma))\cdot \id_{M_{1,2}}=f\cdot \id_{M_{1,2}}.$$

The map $(\acute{g}_{\text{\rm odd}}\acute{g}_{\text{\rm even}})_{1,2}$ is equal to
$$
\proj_{M_{1,2}}\circ m_2\circ (\a_2+r^{-1}W).
$$
Apply (\ref{17.6}) and (\ref{17.46}) to see that
$$\proj_{M_{1,2}}\circ m_2\circ \a_2=\proj_{M_{1,2}}\circ \a_1\circ k_2=0$$
by (\ref{17.6}) and (\ref{17.46}). 
The equality $m_2\circ W=0$ follows immediately from the definition of $W$ in \ref{17.8'}. It follows that $(\acute{g}_{\text{\rm odd}}\acute{g}_{\text{\rm even}})_{1,2}=0$. 

The map $(\acute{g}_{\text{\rm odd}}\acute{g}_{\text{\rm even}})_{1,3}$ is 
$$\proj_{M_{1,2}}\circ m_2\circ m_3|_{M_{3,2}}=0.$$ The map
$(\acute{g}_{\text{\rm odd}}\acute{g}_{\text{\rm even}})_{2,1}$ is
$$r^2\beta_2\circ X|_{M_{1,2}}-r(\beta_2\circ w_1+Y\circ X)|_{M_{1,2}}+Y\circ w_1|_{M_{1,2}}=0$$
by 
\ref{17*.2}.(\ref{17*.2.b}), and items (\ref{17*.5'.c}), and (\ref{17*.5'.b}) of Lemma~\ref{17*.5'}. 
Observe that 
\begin{align*}(\acute{g}_{\text{\rm odd}}\acute{g}_{\text{\rm even}})_{2,2} 
{}={}&r\beta_2\circ\a_2+(\beta_2\circ W-Y\circ \a_2)+r^{-1}Y\circ W\\
{}={}&r\beta_0(1)\cdot \id_{K_2}+k_1(\sigma)\cdot \id_{K_2}+r^{-1}\cdot 0=f\cdot \id_{K_2}\end{align*}
by Observation~\ref{17.3}.(\ref{17.3.a}), Lemma~\ref{17*.5'}.(\ref{18.M}), and the fact that $Y\circ W$ factors through $$m_2\circ m_3=0;$$ see Definition~\ref{17.8'}. The map 
$(\acute{g}_{\text{\rm odd}}\acute{g}_{\text{\rm even}})_{2,3}$ equals
$$r
\beta_2\circ m_3|_{M_{3,2}}-
Y\circ m_3|_{M_{3,2}}=0.$$
Indeed, $\beta_2\circ m_3=k_3\circ \beta_3$ by Commutative Diagram \ref{17.6}, $\beta_3|_{M_{3,2}}=0$ by Observation~\ref{17.3}.(\ref{18.D}), and $Y\circ m_3$ factors through $m_2\circ m_3=0$ by the definition of $Y$, see \ref{17.8'}.
Observe that
$$(\acute{g}_{\text{\rm odd}}\acute{g}_{\text{\rm even}})_{3,1}=
\proj_{M_{3,2}}\circ \Big(r^2 X^\dagger\circ X+r(w_2\circ X-X^\dagger\circ w_1)-w_2\circ w_1 \Big)|_{M_{1,2}}
$$
and this is zero by Hypothesis~\ref{17.9}.(\ref{17.9.d}), Lemma~\ref{17*.5'}.(\ref{17*.5'.a}), and the fact that the rows  Commutative Diagram~\ref{17*.4} are complexes.
Apply Hypothesis~\ref{17.9}.(\ref{17.9.e}) and items  (\ref{18.O}) and (\ref{18.P}) of  Lemma~\ref{17*.5'} to see that
$$(\acute{g}_{\text{\rm odd}}\acute{g}_{\text{\rm even}})_{3,2}= 
\proj_{M_{3,2}}\circ(r X^\dagger\circ \a_2 + (X^\dagger\circ W+ w_2\circ \a_2 )+r^{-1}w_2\circ W)
$$ is equal to zero.
The map $(\acute{g}_{\text{\rm odd}}\acute{g}_{\text{\rm even}})_{3,3}$ is equal to
\begin{align*}
&\proj_{M_{3,2}}\circ(rX^\dagger+w_2)\circ m_3|_{M_{3,2}}\\
{}={}&\proj_{M_{3,2}}\circ\big(r(\beta_0(1)\cdot \id_{M_3}-\a_3\circ \beta_3)
+m_4\circ w_3+k_1(\sigma)\cdot \id_{M_3}\big)|_{M_{3,2}}\end{align*}
by \ref{17*.2}.(\ref{17*.2.d}) and \ref{17*.3}.(\ref{17*.3.b}).
Recall from Observation~\ref{17.3}.(\ref{18.D}) that $\beta_3(M_{3,2})=0$. Recall, also, from \ref{17.8'} and \ref{17.7}, that $w_3(M_{3,2})\subseteq M_{3,2}\cdot M_{1,1}=0$. 
It follows that $(\acute{g}_{\text{\rm odd}}\acute{g}_{\text{\rm even}})_{3,3}$
 is $$\proj_{M_{3,2}}\circ\big(r\beta_0(1)\cdot \id_{M_3}
+k_1(\sigma)\cdot \id_{M_3}\big)|_{M_{3,2}}= f\cdot \id_{M_{3,2}}.$$
\vskip-24pt \hfill \qed
\end{chunk}

\section{The matrix factorization of Theorem \ref{17.9}  induces the infinite tail of the resolution of $P/(f,\mfK)$ by free $P/(f)$ modules.}
Let $\Pbar$ represent $P/(f)$ and $\ov{\phantom{x}}$ represent the functor $-\t_P\Pbar$. 

\begin{theorem}\label{thm17} Adopt the language of Theorem~{\rm\ref{17.9}}. Then the following statements hold.
\begin{enumerate}[\rm 1.]
\item\label{one} The maps and modules
\begin{equation}\label{17goal} N:\quad \cdots \xrightarrow{n_3}  N_2 \xrightarrow{n_2} N_1\xrightarrow{n_1}N_0\end{equation}
form a resolution of $\Pbar/\mfK\Pbar$ by free $\Pbar$-modules, where
 the modules of $N$ are
$$N_i=\begin{cases}
\ov{K_0},&\text{if $i=0$},\\
\ov{K_1},&\text{if $i=1$},\\
\ov{M_{1,2}}\p \ov{K_2},&\text{if $i=2$},\\
\ov{M_2}\p \ov{K_3},&\text{if $i=3$},\\
\ov{G_\text{\rm even}}, &\text{if $4\le i$ and $i$ is even, and}\\ 
\ov{G_\text{\rm odd}}, &\text{if $5\le i$ and  $i$ is odd,}
\end{cases}$$
and 
 the differentials $n_i$ are given by 
\begingroup\allowdisplaybreaks
\begin{align*} 
n_1&{}=-\ov{k_1};\\[5pt]
n_2&{}=\bmatrix (r\ov{\beta_1}+\ov{z_0\circ m_1})|_{\ov{M_{1,2}}}&-\ov{k_2}
\endbmatrix;\\[5pt]
n_3&{}=\bmatrix 
\proj_{M_{1,2}}\circ \ov{m_2}&0 \\
r\ov{\beta_2}-\ov{Y}&-\ov{k_3}
\endbmatrix;\\[5pt]
n_4&{}=\bmatrix 
(r\ov{X}-\ov{w_1})|_{\ov{M_{1,2}}}&\ov{\a_2}&\ov{m_3}&0\\
0&-\ov{z_2}&r\ov{\beta_3}&-\ov{k_4}\\
\endbmatrix;\\
n_i&{}=\ov{g_{\text{\rm odd}}},&&\text{if $5\le i$ and $i$ is odd; and}
\\[5pt]
n_i&{}=\ov{g_{\text{\rm even}}},&&\text{if $6\le i$ and $i$ is even.}
\end{align*}
\endgroup 
\item\label{two} If $r$ is a unit, then the maps and modules
\begin{equation}\label{17goal'} \acute{N}:\quad \cdots \xrightarrow{\acute{n}_3}  \acute{N}_2 \xrightarrow{\acute{n}_2} \acute{N}_1\xrightarrow{\acute{n}_1}\acute{N}_0\end{equation}
form a resolution of $\Pbar/\mfK\Pbar$ by free $\Pbar$-modules, where
 the modules of $\acute{N}$ are
$$\acute{N}_i=\begin{cases}
\ov{K_0},&\text{if $i=0$},\\
\ov{K_1},&\text{if $i=1$},\\
\ov{M_{1,2}}\p \ov{K_2},&\text{if $i=2$},\\
\ov{\acute{G}_\text{\rm odd}}, &\text{if $3\le i$ and  $i$ is odd,}\\
\ov{\acute{G}_\text{\rm even}}, &\text{if $4\le i$ and $i$ is even, and}\\ 
\end{cases}$$
and 
 the differentials $\acute{n}_i$ are given by 
\begingroup\allowdisplaybreaks
\begin{align*} 
\acute{n}_1&{}=-\ov{k_1};\\[5pt]
\acute{n}_2&{}=\bmatrix (r\ov{\beta_1}+\ov{z_0\circ m_1})|_{\ov{M_{1,2}}}&-\ov{k_2}
\endbmatrix;\\[5pt]
\acute{n}_3&{}=\bmatrix 
\proj_{M_{1,2}}\circ \ov{m_2} \\
r\ov{\beta_2}-\ov{Y}
\endbmatrix;\\[5pt]
\acute{n}_i&{}=\ov{\acute{g}_{\text{\rm even}}},&&\text{if $4\le i$ and $i$ is even; and}
\\
\acute{n}_i&{}=\ov{\acute{g}_{\text{\rm odd}}},&&\text{if $5\le i$ and $i$ is odd.}
\\[5pt]
\end{align*}
\endgroup 
\end{enumerate}
\end{theorem}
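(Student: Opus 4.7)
My plan is to prove Theorem~\ref{thm17} in three stages: verify $N$ is a complex, identify $H_0(N)$, and then establish acyclicity in positive degrees. Part~\ref{two} is parallel to part~\ref{one} (when $r$ is a unit, $\acute g_{\text{even}}, \acute g_{\text{odd}}$ arise from $g_{\text{even}}, g_{\text{odd}}$ by splitting off a trivial factorization, as noted after Theorem~\ref{17.9}), so I treat the two cases together and focus on part~\ref{one}.

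First I would check that $N$ is a complex. For $i \geq 6$, the composition $n_{i-1} \circ n_i$ is either $\ov{g_{\text{odd}} \circ g_{\text{even}}}$ or $\ov{g_{\text{even}} \circ g_{\text{odd}}}$, each equal to $\ov{f \cdot \id} = 0$ by Theorem~\ref{17.9} after reducing modulo $f$. The composition $n_4 \circ n_5$ is the first two rows of $\ov{g_{\text{even}} \circ g_{\text{odd}}}$, hence also zero. The remaining three compositions $n_3 \circ n_4$, $n_2 \circ n_3$, and $n_1 \circ n_2$ are checked entry-by-entry: most entries reduce to the corresponding entries of $g_{\text{odd}} g_{\text{even}}$ computed in~\ref{proof17.9} (being either off-diagonals already zero over $P$, or diagonal $f \cdot \id$ entries that vanish modulo $f$); the remaining entries use the commutative diagram~(\ref{17.6}), equation~(\ref{17.46}) giving $\proj_{M_{1,2}} \circ \a_1 = 0$, and Observation~\ref{17*.3}.(\ref{17*.3.b}) together with the decomposition $f = r\beta_0(1) + k_1(\sigma)$ from~(\ref{oct28}).

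Identifying $H_0$ is immediate: $N_0 = \Pbar$ and $\im n_1 = \im \ov{k_1} = \mfK\Pbar$, so $H_0(N) = \Pbar/\mfK\Pbar$. For acyclicity at every $N_i$ with $i \geq 5$, I would invoke the standard matrix factorization argument: since $f$ is a nonzerodivisor on $P$ and $g_{\text{even}}, g_{\text{odd}}$ satisfy $g_{\text{odd}} g_{\text{even}} = f \cdot \id$ and $g_{\text{even}} g_{\text{odd}} = f \cdot \id$, both maps are injective over $P$, and their reductions modulo $f$ satisfy $\ker \ov{g_{\text{even}}} = \im \ov{g_{\text{odd}}}$ and $\ker \ov{g_{\text{odd}}} = \im \ov{g_{\text{even}}}$.

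The main obstacle is acyclicity at the three transitional degrees $2$, $3$, and $4$, where the differentials truncate the matrix factorization. I would handle this by lifting the low-degree portion of $N$ to a finite $P$-free complex
$$\widetilde N: \quad 0 \to M_4 \to G_{\text{even}} \to M_2 \oplus K_3 \to M_{1,2} \oplus K_2 \to K_1 \xrightarrow{-k_1} P,$$
constructed from the same matrix entries but before reducing modulo $f$, and proving that $\widetilde N$ is a free $P$-resolution of $P/(f, \mfK)$. Then $N$ is recovered from $\widetilde N$ by replacing the slot $M_4$ and the map out of $G_{\text{even}}$ with the full $2$-periodic matrix-factorization tail, and $\Pbar$-acyclicity in degrees $\leq 4$ follows by comparing $\Tor^P_\bullet(\Pbar, P/(f,\mfK))$ computed both via $\widetilde N$ and via the periodic tail. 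The hardest step is exactness at $i = 4$: an $x \in \ov{G_{\text{even}}}$ with $\ov{g_{\text{even}}}(x)$ supported only in the $\ov{M_4}$ summand must be shown to lie in $\im \ov{g_{\text{odd}}}$. I expect to handle this by lifting $x$ to $G_{\text{even}}$, using $g_{\text{odd}} g_{\text{even}} = f \cdot \id$ to control the $M_4$-component modulo $f$, and then combining injectivity of $m_4$ and $\beta_4$ (see~\ref{19.3.7}) with the nonzerodivisor property of $f$ to force that component to vanish, reducing to the exactness already established for the purely periodic part.
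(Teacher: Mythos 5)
There is a genuine gap in the treatment of the transitional degrees $2$, $3$, $4$, which is exactly where you acknowledge the difficulty lies. The complex you propose,
$$\widetilde N: \quad 0 \to M_4 \to G_{\text{even}} \to M_2 \oplus K_3 \to M_{1,2} \oplus K_2 \to K_1 \to P,$$
cannot be a free $P$-resolution of $P/(f,\mfK)$, for a rank reason. Writing $m_1 = \rank M_1$, and using $\rank M_{1,2} = m_1 - 4$, $\rank M_2 = 2m_1 - 2$, $\rank M_3 = m_1$, $\rank M_4 = 1$, and the Koszul ranks $1,4,6,4,1$, the alternating sum of ranks of $\widetilde N$ is
$$1 - 4 + (m_1 + 2) - (2m_1 + 2) + (2m_1 + 3) - 1 = m_1 - 1 \ge 3,$$
whereas a finite free resolution of the torsion module $P/(f,\mfK)$ (grade $\ge 5$) must have alternating sum $0$ (localize at any prime not containing $(f,\mfK)$). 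So $\widetilde N$ cannot be acyclic, and the proposed comparison via $\Tor^P_\bullet(\Pbar, P/(f,\mfK))$ never gets off the ground; the subsequent patching argument at $i=4$ relies on this nonexistent resolution.

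The paper instead works with a complex that does have Euler characteristic zero: it perturbs $\beta$ to a map of complexes $\beta' : M \to K$ with $\beta_0'(1) = f$ (see (\ref{p355}) and (\ref{f-oct-31})), so the mapping cone $L$ of $\beta'$ --- with modules $K_0$, $M_0 \oplus K_1$, $M_1 \oplus K_2$, $M_2 \oplus K_3$, $M_3 \oplus K_4$, $M_4$ --- is a free $P$-resolution of $P/(f,\mfK)$. Then $\ov L$ has homology $\Pbar/\mfK\Pbar$ in degrees $0$ and $1$, and the maps $\rho_i : L_i \to L_{i+1}$ (verified to give a map of complexes mod $f$ in Lemma~\ref{17-oct20}) kill the degree-one homology; iterating yields the infinite double complex of Table~\ref{17picture}, whose total complex resolves $\Pbar/\mfK\Pbar$. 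Elementary row and column operations, exploiting that $\ov{\a_0}$ and $\proj_{\ov{M_{1,1}}}\circ\ov{\a_1}$ are isomorphisms, then reduce this total complex to $N$. This is the classical Shamash--Eisenbud construction adapted to the data at hand, and it handles all degrees uniformly --- there is no separate ``transitional degree'' argument to make. If you want to salvage your plan, the minimal fix is to replace $\widetilde N$ by $L$; but you would then still need the row/column reduction to pass from the total complex to $N$, at which point you have essentially reproduced the paper's proof. Your verification that $N$ is a complex and your treatment of the $2$-periodic tail via the matrix factorization identities are both fine.
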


\begin{proof} The idea for this proof is inspired by the proof of \cite[Lem.~2.3]{KRV}.
Recall the map of complexes $\beta:M\to K$ of Observation~\ref{17.41}. Consider the perturbation $\beta':M\to K$ of $\beta$, where
\begin{equation}\label{p355}\beta_i'=\begin{cases} r\beta_i,&\text{for $2\le i\le 4$,}\\
r\beta_1+z_0\circ m_1,&\text{for $i=1$,}\\
r\beta_0+m_1\circ w_0,&\text{for $i=0$},\end{cases}\end{equation}for $r$  defined in (\ref{17.3.2}) and 
$z_1$ and $w_0$ defined in (\ref{17.8'}). In particular,
\begin{equation}\label{f-oct-31}\beta_0'(1)
=r\beta_0(1)+k_1(\sigma)=f.\end{equation} 
It is easy to see that $\beta':M\to K$ is also a map of complexes. Indeed, the only interesting calculation occurs in the right most square; and this square commutes because
$$m_1\circ w_0\circ m_1=k_1\circ z_0\circ m_1,$$
since 
\begin{align*}
m_1\circ (w_0\circ m_1)&{}=m_1\circ (k_1(\sigma)\cdot \id_{M_1}+m_2\circ w_1)=k_1(\sigma)\cdot m_1&&\text{by \ref{17*.3}.(\ref{17*.3.b}), and}\\
(k_1\circ z_0)\circ m_1&{}=(k_1(\sigma)\cdot \id_{K_0})\circ m_1=k_1(\sigma)\cdot m_1&&\text{by \ref{17*.3}.(\ref{17*.3.a}).}\end{align*}

Consider the short exact sequence 
$$0\to P/(\mfK:f)\to P/\mfK\to P/(\mfK,f)\to 0.$$
The complexes $M$ and $K$ are resolutions of $P/(\mfK:f)$ and $P/\mfK$, respectively, by free $P$-modules. It follows that the 
  mapping cone $L$ of 
$$\xymatrix{
0\ar[r]& M_4\ar[r]^{m_4}\ar[d]^{\beta'_4}& M_3\ar[r]^{m_3}\ar[d]^{\beta'_3}& M_2\ar[r]
^{m_2}\ar[d]^{\beta'_2}& M_1   \ar[r]^{m_1}\ar[d]^{\beta'_1}& M_0\ar[d]^{\beta'_0}
\\
0\ar[r]& K_4\ar[r]^{k_4}& K_3\ar[r]^{k_3}& K_2\ar[r]^{k_2}& K_1   \ar[r]^{k_1}& K_0
}$$
is a resolution of $P/(\mfK,f)$ by free $P$-modules. 
This resolution has the form  $$L:\quad 0\to L_5\xrightarrow{\ell_5} L_4\xrightarrow{\ell_4}
L_3\xrightarrow{\ell_3}
L_2\xrightarrow{\ell_2}L_1\xrightarrow{\ell_1}L_0,$$
 where
 $$L_5=M_4,\quad L_4=\begin{matrix}M_3\\\p\\
K_4,\end{matrix}\quad
\quad L_3=\begin{matrix} M_2\\\p\\K_3,\end{matrix}\quad
L_2=\begin{matrix} M_1\\\p\\K_2,\end{matrix}\quad
L_1= \begin{matrix} M_0\\\p\\K_1,\end{matrix}\quad
L_0=K_0,$$
$$\ell_5=\bmatrix m_4\\\beta'_4\endbmatrix, \quad
\ell_4=\bmatrix m_3&0\\\beta'_3&-k_4\endbmatrix, \quad
\ell_3=\bmatrix m_2&0\\\beta'_2&-k_3 \endbmatrix,\quad
\ell_2=\bmatrix m_1&0\\\beta'_1&-k_2\endbmatrix,$$and
 $\ell_1=\bmatrix \beta'_0&-k_1 \endbmatrix$. 
The element $f$ of $P$ is regular by hypothesis; hence
$\ov{L}$ is a complex with homology:
$$\HH_i(\Lbar)=\Tor_i^P(\Pbar/\mfK \Pbar,\Pbar)=\begin{cases} \Pbar/\mfK \Pbar, &\text{if $i$ is $0$ or $1$, and }\\0,&\text{otherwise.} \end{cases}$$
Furthermore, the cycle 
\begin{equation}\label{17xi}\xi= \bmatrix 1\\0\endbmatrix\end{equation} in $\overline{L_1}$ represents a generator of $\operatorname{H}_1(\overline{L})$. We kill the homology in $\overline{L}$. Define $P$-module homomorphisms $\XXX_i: L_i\to L_{i+1}$ by 
\begin{align*}\rho_4&{}=\bmatrix -w_3&\a_4\endbmatrix,\quad \XXX_3=\bmatrix -rX^\dagger-w_2&-\a_3\\0&-z_3\endbmatrix,\quad 
\XXX_2=\bmatrix rX-w_1&\a_2\\0&-z_2\endbmatrix,\\
\XXX_1&{}=\bmatrix 0&-\a_1\\0&-z_1\endbmatrix,\quad \text{and}\quad  \XXX_0=\bmatrix \a_0\\0\endbmatrix.\end{align*}
  It is shown in Lemma~\ref{17-oct20}.(\ref{17-oct20.a}) that
\begin{equation}
\label{17ll}\xymatrix{
0\ar[r]&\ov{L_5}\ar[r]^{\ov{\ell_5}}\ar[d]&
\ov{L_4}\ar[r]^{\ov{\ell_4}}\ar[d]^{\ov{\rho_4}}&
\ov{L_3}\ar[r]^{\ov{\ell_3}}\ar[d]^{\ov{\XXX_3}}&
\ov{L_2}\ar[r]^{\ov{\ell_2}}\ar[d]^{\ov{\XXX_2}}&
\ov{L_1}\ar[r]^{\ov{\ell_1}}\ar[d]^{\ov{\XXX_1}}&
\ov{L_0}\ar[r]\ar[d]^{\ov{\XXX_0}}&0\ar[d]\\
&0\ar[r]&\ov{L_5}\ar[r]^{\ov{\ell_5}}&
\ov{L_4}\ar[r]^{\ov{\ell_4}}&
\ov{L_3}\ar[r]^{\ov{\ell_3}}&
\ov{L_2}\ar[r]^{\ov{\ell_2}}&
\ov{L_1}\ar[r]^{\ov{\ell_1}}&
\ov{L_0}\ar[r]&0\\}\end{equation}
is a map of complexes. It is clear that $\XXX_0$ induces an isomorphism from $\operatorname{H}_0$ of the top line of (\ref{17ll}) to $\operatorname{H}_1$ of the bottom  line of (\ref{17ll}). Let $\mathbb M$ be the total complex of (\ref{17ll}). We have shown that the homology of $\mathbb M$ is concentrated in positions $0$ and $3$ and the $\xi$ from (\ref{17xi}) of the summand $\overline{L}_1$ in $\mathbb M_3=\overline{L}_1\oplus \overline{L}_3$ represents the $\ov{L_1}$-component of a generator of $\operatorname{H}_3(\mathbb M)$. 
It is shown in 
Lemma~\ref{17-oct20}.(\ref{17-oct20.b}) that $\ov{\rho_1}\circ \ov{\rho_0}=0$; so indeed, the element $\xi$ of $\mathbb M_3$ is a cycle of $\mathbb M$. We kill the homology  of $\mathbb M$.
In theory we need to give a map of complexes from $\ov{L}[-4]$ to all of $\mathbb M$; however, in practice, because of Lemma~\ref{17-oct20}.(\ref{17-oct20.b}), it suffices to give a map of complexes from $\ov{L}[-4]$ to the top line $\ov{L}[-2]$ of (\ref{17ll}).
Iterate this process to see that  $\Pbar/\mfK\Pbar$ is resolved by the total complex $\mathbb T$ of the 
 infinite double complex given in Table~\ref{17picture}.  
\begin{table}
\begin{center}
\xymatrix{
&\vdots\ar[d]^{\ov{\XXX_4}}
&\vdots\ar[d]^{\ov{\XXX_3}}&
\vdots\ar[d]^{\ov{\XXX_2}}&
\vdots\ar[d]^{\ov{\XXX_1}}&
\vdots\ar[d]^{\ov{\XXX_0}}&
\vdots\ar[d]\\
0\ar[r]&\ov{L_5}\ar[r]^{\ov{\ell_5}}\ar[d]
&\ov{L_4}\ar[r]^{\ov{\ell_4}}\ar[d]^{\ov{\XXX_4}}&
\ov{L_3}\ar[r]^{\ov{\ell_3}}\ar[d]^{\ov{\XXX_3}}&
\ov{L_2}\ar[r]^{\ov{\ell_2}}\ar[d]^{\ov{\XXX_2}}&
\ov{L_1}\ar[r]^{\ov{\ell_1}}\ar[d]^{\ov{\XXX_1}}&
\ov{L_0}\ar[r]\ar[d]^{\ov{\XXX_0}}&0\ar[d]\\
&0\ar[r]
&\ov{L_5}\ar[r]^{\ov{\ell_5}}\ar[d]
&\ov{L_4}\ar[r]^{\ov{\ell_4}}\ar[d]^{\ov{\XXX_4}}
&\ov{L_3}\ar[r]^{\ov{\ell_3}}\ar[d]^{\ov{\XXX_3}}&
\ov{L_2}\ar[r]^{\ov{\ell_2}}\ar[d]^{\ov{\XXX_2}}&
\ov{L_1}\ar[r]^{\ov{\ell_1}}\ar[d]^{\ov{\XXX_1}}&
\ov{L_0}\ar[r]\ar[d]^{\ov{\XXX_0}}&0\ar[d]\\
&&0\ar[r]&
\ov{L_5}\ar[r]^{\ov{\ell_5}}&
\ov{L_4}\ar[r]^{\ov{\ell_4}}&
\ov{L_3}\ar[r]^{\ov{\ell_3}}&
\ov{L_2}\ar[r]^{\ov{\ell_2}}&
\ov{L_1}\ar[r]^{\ov{\ell_1}}&
\ov{L_0}\ar[r]&0.}

\medskip

\caption{The total complex of this infinite double complex is called $\mathbb T$.}\label{17picture}
\end{center}
\end{table}
We emphasize that it is shown in Lemma~\ref{17-oct20}.(\ref{17-oct20.b}) that each column of Table~\ref{17picture} is a complex.
Observe that the modules of $\mathbb T$ are
$$\mathbb T_i=\begin{cases} 
\ov{L_0}&\text{if $i=0$}\\
\ov{L_1}&\text{if $i=1$}\\
\ov{L_0}\p\ov{L_2},&\text{if $i=2$,}\\
\ov{L_1}\p \ov{L_3},&\text{if $i=3$,}\\
\ov{L_0}\p\ov{L_2}\p\ov{L_4},&\text{if $4\le i$ and $i$ is even,}\\
\ov{L_1}\p \ov{L_3}\p \ov{L_5},&\text{if $5\le i$ and $i$ is odd,}\\\end{cases}$$ and the differential of $\mathbb T$ is
\begingroup\allowdisplaybreaks
\begin{align*}t_1&{}=\ov{\ell_1},\quad 
t_2=\bmatrix \ov{\XXX_0}&\ov{\ell_2}\endbmatrix,\quad
t_3=\bmatrix 
\ov{\ell_1}&0\\
-\ov{\XXX_1}&\ov{\ell_3}
\endbmatrix,\quad
t_4=\bmatrix 
\ov{\XXX_0}&\ov{\ell_2}&0\\
0&\ov{\XXX_2}&\ov{\ell_4}\\
\endbmatrix,\\
t_i&{}=\bmatrix 
\ov{\ell_1}&0&0\\
-\ov{\XXX_1}&\ov{\ell_3}&0\\
0&-\ov{\XXX_3}&\ov{\ell_5}
\endbmatrix,\quad\text{if $5\le i$ and $i$ is odd, and}\\
t_i&{}=\bmatrix 
\ov{\XXX_0}&\ov{\ell_2}&0\\
0&\ov{\XXX_2}&\ov{\ell_4}\\
0&0&\ov{\rho_4}
\endbmatrix,\quad\text{if $6\le i$ and $i$ is even.}\end{align*}\endgroup
In order to remove the parts of $\mathbb T$ that obviously split off, we record $\mathbb T$ explicitly and we employ the decomposition $$M_1=M_{1,1}\p M_{1,2}.$$ Thus, $\mathbb T_i$ is equal to
\begingroup\allowdisplaybreaks\begin{align*} 
&\ov{K_0},&&\text{if $i=0$,}\\
&\ov{M_0}\p \ov{K_1},&&\text{if $i=1$,}\\
&\ov{K_0}\p\ov{M_{1,1}}\p\ov{M_{1,2}}\p\ov{K_2},&&\text{if $i=2$,}\\
&\ov{M_0}\p \ov{K_1}\p\ov{M_{2}}\p\ov{K_3},&&\text{if $i=3$,}\\
&\ov{K_0}\p\ov{M_{1,1}}\p\ov{M_{1,2}}\p\ov{K_2}\p\ov{M_{3}}
\p \ov{K_4}
,&&\text{if $4\le i$ and $i$ is even,}\\
&\ov{M_0}\p \ov{K_1}\p\ov{M_{2}}\p\ov{K_3}
\p\ov{M_4}
,&&\text{if $5\le i$ and $i$ is odd,}\\\end{align*}\endgroup
and the differentials $t_i$ are given by 
\begingroup\allowdisplaybreaks
\begin{align} 
t_1&{}=\bmatrix \ov{\beta_0'}&-\ov{k_1}\endbmatrix;\notag\\[5pt]
t_2&{}=\bmatrix \ov{\a_0}&\ov{m_1}|_{\ov{M_{1,1}}}&\ov{m_1}|_{\ov{M_{1,2}}}&0\\
0&\ov{\beta_1'}|_{\ov{M_{1,1}}}
&\ov{\beta_1'}|_{\ov{M_{1,2}}}&-\ov{k_2}
\endbmatrix;\notag\\[5pt]
t_3&{}=\bmatrix 
\ov{\beta_0'}&-\ov{k_1}&0&0\\
0&\proj_{\ov{M_{1,1}}}\circ \ov{\a_1}&\proj_{M_{1,1}}\circ \ov{m_2}&0 \\
0&0&\proj_{\ov{M_{1,2}}}\circ \ov{m_2}&0 \\
0&\ov{z_1}&r\ov{\beta_2}&-\ov{k_3}
\endbmatrix;\notag\\[5pt]
t_4&{}=\bmatrix \ov{\a_0}&\ov{m_1}|_{\ov{M_{1,1}}}&\ov{m_1}|_{\ov{M_{1,2}}}&0&0&0\\
0&\ov{\beta_1'}|_{\ov{M_{1,1}}}
&\ov{\beta_1'}|_{\ov{M_{1,2}}}&-\ov{k_2}&0&0\\
0&-\ov{w_1}|_{\ov{M_{1,1}}}&(r\ov{X}-\ov{w_1})|_{\ov{M_{1,2}}}&\ov{\a_2}&\ov{m_3}&0\\
0&0&0&-\ov{z_2}&r\ov{\beta_3}&-\ov{k_4}\\
\endbmatrix;\notag\\
t_i&{}=\bmatrix 
\ov{\beta_0'}&-\ov{k_1}&0&0&0\\
0&\proj_{\ov{M_{1,1}}}\circ \ov{\a_1}&\proj_{\ov{M_{1,1}}}\circ \ov{m_2}&0&0 \\
0&0&\proj_{\ov{M_{1,2}}}\circ \ov{m_2}&0&0 \\
0&\ov{z_1}&r\ov{\beta_2}&-\ov{k_3}&0\\
0&0&
(r\ov{X^\dagger}+\ov{w_2})
& \ov{\a_3}
&\ov{m_4}\\
0&0&0&\ov{z_3}&r\ov{\beta_4}
\endbmatrix,\notag\\\intertext{if $5\le i$ and $i$ is odd; and}
\notag\\[5pt]
t_i&{}=\bmatrix \ov{\a_0}&\ov{m_1}|_{\ov{M_{1,1}}}&\ov{m_1}|_{\ov{M_{1,2}}}&0&0&0\\
0&\ov{\beta_1'}|_{\ov{M_{1,1}}}
&\ov{\beta_1'}|_{\ov{M_{1,2}}}&-\ov{k_2}&0&0\\
0&-\ov{w_1}|_{\ov{M_{1,1}}}&(r\ov{X}-\ov{w_1})|_{\ov{M_{1,2}}}&\ov{\a_2}&\ov{m_3}&0\\
0&0&0&-\ov{z_2}&r\ov{\beta_3}&-\ov{k_4}\\
0&0&0&0&-\ov{w_3}&\ov{\a_4}
\endbmatrix,\notag 
\end{align}
\endgroup if $6\le i$ and $i$ is even. The maps $\ov{\a_0}$ and $\proj_{\ov{M_{1,1}}}\circ \ov{\a_1}$ are isomorphisms. One applies elementary row and column operations to see that the complex $(\mathbb T,t)$ is isomorphic to the complex $(\mathbb T,t')$
where the differentials $t_i'$ are given by 
\begingroup\allowdisplaybreaks
\begin{align*} 
t_1'&{}=\bmatrix 0&-\ov{k_1}\endbmatrix;\\[5pt]
t_2'&{}=\bmatrix \ov{\a_0}&0&0&0\\
0&0
&\ov{\beta_1'}|_{\ov{M_{1,2}}}&-\ov{k_2}
\endbmatrix;\\[5pt]
t_3'&{}=\bmatrix 
0&0&0&0\\
0&\proj_{M_{1,1}}\circ \ov{\a_1}&0&0 \\
0&0&\proj_{M_{1,2}}\circ \ov{m_2}&0 \\
0&0&r\ov{\beta_2}-\ov{Y}&-\ov{k_3}
\endbmatrix;\\[5pt]
t_4'&{}=\bmatrix \ov{\a_0}&0&0&0&0&0\\
0&0
&0&0&0&0\\
0&0&(r\ov{X}-\ov{w_1})|_{\ov{M_{1,2}}}&\ov{\a_2}&\ov{m_3}&0\\
0&0&0&-\ov{z_2}&r\ov{\beta_3}&-\ov{k_4}\\
\endbmatrix;\\
t_i'&{}=\bmatrix 
0&0&0&0&0\\
0&\proj_{\ov{M_{1,1}}}\circ \ov{\a_1}&0&0&0 \\
0&0&\proj_{\ov{M_{1,2}}}\circ \ov{m_2}&0&0 \\
0&0&r\ov{\beta_2}-\ov{Y}&-\ov{k_3}&0\\
0&0&r \ov{X^\dagger}+\ov{w_2}& \ov{\a_3}
&\ov{m_4}\\
0&0&0&\ov{z_3}&r\ov{\beta_4}
\endbmatrix,\\\intertext{if $5\le i$ and $i$ is odd; and}
\\[5pt]
t_i'&{}=\bmatrix \ov{\a_0}&0&0&0&0&0\\
0&0
&0&0&0&0\\
0&0&(r\ov{X}-\ov{w_1})|_{\ov{M_{1,2}}}&\ov{\a_2}&\ov{m_3}&0\\
0&0&0&-\ov{z_2}&r\ov{\beta_3}&-\ov{k_4}\\
0&0&0&0&-\ov{w_3}&\ov{\a_4}
\endbmatrix,
\end{align*}
\endgroup if $6\le i$ and $i$ is even.

It is clear that the complex $N$ of (\ref{17goal}) is a subcomplex of the resolution $(\mathbb T, t')$ and the inclusion map is a quasi-isomorphism. Thus, $N$ is a resolution. The completes the proof of statement \ref{one}.

The proof of statement \ref{two} begins with the resolution $N$ from \ref{one}. The module $\ov{M_3}$ is now written as $\ov{M_{3,1}}\p \ov{M_{3,2}}$. The differentials $n_1$, $n_2$, and $n_3$ are unchanged, and the other  differentials are now written as follows: 
\begingroup\allowdisplaybreaks
\begin{align*} 
n_4&{}=\ov{\bmatrix 
(rX-w_1)|_{M_{1,2}}&\a_2&m_3|_{M_{3,1}}&m_3|_{M_{3,2}}&0\\
0&-z_2&r\beta_3|_{M_{3,1}}&0&-k_4\\
\endbmatrix};\\
n_i&{}=\ov{
\bmatrix 
\proj_{M_{1,2}}\circ m_2&0&0 \\
r\beta_2-Y&-k_3&0\\
\proj_{M_{3,1}}\circ (rX^\dagger+w_2)&\proj_{M_{3,1}}\circ \a_3&\proj_{M_{3,1}}\circ m_4\\
\proj_{M_{3,2}}\circ (rX^\dagger+w_2)&\proj_{M_{3,2}}\circ \a_3&\proj_{M_{3,2}}\circ m_4\\
0&z_3&r\beta_4
\endbmatrix},\intertext{if $5\le i$ and $i$ is odd; and}
\\[5pt]
n_i&{}=\ov{
\bmatrix 
(rX-w_1)|_{M_{1,2}}&\a_2&m_3|_{M_{3,1}}&m_3|_{M_{3,2}}&0\\
0&-z_2&r\beta_3|_{M_{3,1}}&0&-k_4\\
0&0&-w_3|_{M_{3,1}}&-w_3|_{M_{3,2}}&\a_4
\endbmatrix},
\end{align*}
\endgroup if $6\le i$ and $i$ is even. The map $r\beta_3|_{M_{3,2}}$ should appear in row 2, column 4 of the map $n_i$, for even $i$ with $4\le i$. This map is zero according to Observation~\ref{17.3}.(\ref{18.D}).  
 
Recall from (\ref{19.3.7}) that $r\beta_4: M_4\to K_4$ and $r\beta_3|_{M_{3,1}}:M_{3,1}\to K_3$ are isomorphisms. One uses elementary row and column operations, as was done above, to obtain a complex isomorphic to $N$, which is quasi-isomorphic to $\acute{N}$. 
\end{proof}

The two calculations in the next result were used in the proof of Theorem~\ref{thm17}.

\begin{lemma}\label{17-oct20} 

$ $
\begin{enumerate}[\rm(a)]
\item\label{17-oct20.a} The maps and modules of {\rm(\ref{17ll})} form a map of complexes.
\item\label{17-oct20.b} The maps and modules 
$$0\to
\ov{L_0}\xrightarrow{\ov{\rho_0}}  
\ov{L_1}\xrightarrow{\ov{\rho_1}}
\ov{L_2}\xrightarrow{\ov{\rho_2}}
\ov{L_3}\xrightarrow{\ov{\rho_3}}
\ov{L_4}\xrightarrow{\ov{\rho_4}}
\ov{L_5}\to 0$$
form a complex.
\end{enumerate}
\end{lemma}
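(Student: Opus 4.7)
Both parts amount to routine matrix computations whose entries reduce to identities already established. I plan to organize the proof by expanding the relevant compositions of block matrices over $\Pbar$ and citing the appropriate preliminary result for each scalar entry.

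For part (\ref{17-oct20.a}), there are five interior squares in (\ref{17ll}) together with the two degenerate end squares. At each interior position, the commutativity condition $\ov{\ell_{i+1}}\circ \ov{\rho_i}=\ov{\rho_{i-1}}\circ \ov{\ell_i}$ reduces to a small matrix identity over $\Pbar$. Each scalar entry falls into one of three families: a chain-map identity for $\alpha$ or $\beta$, either from diagram (\ref{17.6}) or from Observation~\ref{17*.4}; an anticommutator of the form $w_{i-1}\circ m_i - m_{i+1}\circ w_i=(-1)^{i+1}k_1(\sigma)\cdot \id_{M_i}$ of Observation~\ref{17*.3}.(\ref{17*.3.b}), or its Koszul analog from Observation~\ref{17*.3}.(\ref{17*.3.a}); or one of the main identities of Theorem~\ref{17.9} together with Observation~\ref{17.3}.(\ref{17.3.a}) that $\beta\circ\alpha=\beta_0(1)\cdot\id$. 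The cancellations always combine these pieces with the equation $f=r\beta_0(1)+k_1(\sigma)$ of (\ref{oct28}): whenever the $P$-valued computation of an entry equals $r\beta_0(1)\cdot\id\pm k_1(\sigma)\cdot\id$, that entry vanishes modulo $f$. The two extreme squares are handled directly: one has $\ov{\rho_4}\circ\ov{\ell_5}=\ov{-w_3\circ m_4+r\,\alpha_4\circ \beta_4}=\ov{f\cdot \id_{M_4}}=0$ by Lemma~\ref{17.47}.(\ref{17.47.c}), while $\ov{\ell_1}\circ\ov{\rho_0}=\ov{\beta_0'\circ\alpha_0}=\ov{f\cdot \id_{K_0}}=0$, using the direct computation $m_1\circ w_0=k_1(\sigma)\cdot \id_{M_0}$ together with (\ref{p355}).

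For part (\ref{17-oct20.b}), each of the four compositions $\ov{\rho_i}\circ \ov{\rho_{i-1}}$ is computed as a block-matrix product. Each entry of the resulting product vanishes for one of three reasons: a direct vanishing identity such as $X\circ \alpha_1=0$, $X^\dagger\circ X=0$, or $X^\dagger\circ \alpha_2=0$ from Theorem~\ref{17.9}, $w_3\circ X^\dagger=0$ from Observation~\ref{17*.2}.(\ref{17*.2.c}), or $z_i\circ z_{i-1}=0$ and $w_i\circ w_{i-1}=0$ because degree-one elements square to zero in the $\DG\Gamma$-algebras $K$ and $M$; a chain-map cancellation $w_i\circ\alpha_i=\alpha_{i+1}\circ z_i$ from Observation~\ref{17*.4}; or the crucial identity $w_2\circ X=X^\dagger\circ w_1$ of Lemma~\ref{17*.5'}.(\ref{17*.5'.a}), which balances the cross-terms in $\ov{\rho_3}\circ \ov{\rho_2}$.

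The only real obstacle is bookkeeping: there are many scalar entries to check, and one must be careful with signs and with the decomposition $M_1=M_{1,1}\oplus M_{1,2}$. However, each individual check is immediate once the correct preliminary result is identified, so no new arguments are required; the proof is essentially a translation of the identities of Sections~\ref{PC} and~\ref{PC2} into block-matrix form.
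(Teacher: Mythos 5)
Your proposal matches the paper's proof in both strategy and in the specific identities invoked: the paper simply expands each block entry of $\rho_{i-1}\circ\ell_i-\ell_{i+1}\circ\rho_i$ and of $\rho_i\circ\rho_{i-1}$, and verifies that it equals $0$ or $\pm f\cdot\id$ by citing exactly the results you name (the chain-map diagrams (\ref{17.6}) and Observation~\ref{17*.4}, the anticommutators of Observation~\ref{17*.3}, the five identities of Theorem~\ref{17.9}, Observation~\ref{17.3}.(\ref{17.3.a}), Observation~\ref{17*.2}, Lemma~\ref{17*.5'}.(\ref{17*.5'.a}), Lemma~\ref{17.47}.(\ref{17.47.c}), and $f=r\beta_0(1)+k_1(\sigma)$). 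The only slip is a miscount of the number of squares in (\ref{17ll}), which has no bearing on the argument.
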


\begin{proof} We compute in $P$. Keep in mind that the image of $f$ in $\Pbar$ is zero. Observe that
\begingroup\allowdisplaybreaks 
\begin{align*}
&(\ell_1\circ \rho_0)_{1,1}=\beta'_0\circ\a_0 =f\cdot \id_{K_0},&&\text{by (\ref{f-oct-31});}
\\
&(\rho_0\circ \ell_1-\ell_2\circ \rho_1)_{1,1}=
\a_0\circ \beta'_0=f\cdot \id_{M_0},&&\text{by (\ref{f-oct-31});}\\
&(\rho_0\circ \ell_1-\ell_2\circ \rho_1)_{1,2}
=-\a_0\circ k_1+m_1\circ \a_1=0,&&\text{by (\ref{17.6});}\\
&(\rho_0\circ \ell_1-\ell_2\circ \rho_1)_{2,1}=0;\\
&(\rho_0\circ \ell_1-\ell_2\circ \rho_1)_{2,2}=
\beta_1'\circ \a_1-k_2\circ z_1\\
{}={}&r\beta_0(1)\cdot \id_{K_1}+z_0\circ k_1-k_2\circ z_1\\
{}={}&(r\beta_0(1)+k_1(\sigma))\cdot \id_{K_1}=f\cdot\id_{K_1},\intertext{by \ref{17.3}.(\ref{17.3.a}), (\ref{17.6}), and \ref{17*.3}.(\ref{17*.3.a});}
&(\rho_1\circ \ell_2-\ell_3\circ \rho_2)_{1,1}=-\a_1\circ\beta_1'-m_2\circ (rX-w_1)\\
{}={}&-r(\a_1\circ \beta_1+m_2\circ X)-(w_0\circ \a_0\circ m_1-m_2\circ w_1)\\
{}={}&-(r\beta_0(1)+k_1(\sigma))\cdot \id_{M_1}=-f\cdot\id_{M_1},\intertext{by  \ref{17*.4}, \ref{17.9}.(\ref{17.9.b}), and
\ref{17*.3}.(\ref{17*.3.b});}
&(\rho_1\circ \ell_2-\ell_3\circ \rho_2)_{1,2}=\a_1\circ k_2-m_2\circ\a_2=0,&&\text{by (\ref{17.6});}\\
&(\rho_1\circ \ell_2-\ell_3\circ \rho_2)_{2,1}=-z_1\circ \beta_1'-\beta_2'\circ(rX-w_1)\\
{}={}&r^2\beta_2\circ X+r(-z_1\circ \beta_1+\beta_2\circ w_1)-z_1\circ z_0\circ m_1=0,\intertext{by \ref{17*.2}.(\ref{17*.2.b}) and \ref{17*.4};}
&(\rho_1\circ \ell_2-\ell_3\circ \rho_2)_{2,2}=z_1\circ k_2-k_3\circ z_2-r\beta_2\circ \a_2\\
{}={}&-(k_1(\sigma)+r\beta_0(1))\cdot \id_{K_2}=-f\cdot \id_{K_2},\intertext{by \ref{17*.3}.(\ref{17*.3.a}) and \ref{17.3}.(\ref{17.3.a});}
&(\rho_2\circ \ell_3-\ell_4\circ \rho_3)_{1,1}\\
{}={}&r(X\circ m_2+\a_2\circ \beta_2+m_3\circ X^\dagger)-w_1\circ m_2+m_3\circ w_2\\
{}={}&(r\beta_0(1)+k_1(\sigma))\cdot \id_{M_2}=f\cdot \id_{M_2},\intertext{by \ref{17.9}.(\ref{17.9.c}) and \ref{17*.3}.(\ref{17*.3.b});}
&(\rho_2\circ \ell_3-\ell_4\circ \rho_3)_{1,2}=-\a_2\circ k_3+m_3\circ\a_3=0,&&\text{by (\ref{17.6});}\\
&(\rho_2\circ \ell_3-\ell_4\circ \rho_3)_{2,1}
\\
{}={}&r^2(\beta_3\circ X^\dagger)+r(-z_2\circ \beta_2+\beta_3\circ w_2)=0,\intertext{by \ref{17*.2}.(\ref{17*.2.a}) and \ref{17*.4};}
&(\rho_2\circ \ell_3-\ell_4\circ \rho_3)_{2,2}
=r\cdot \beta_3\circ \a_3+z_2\circ k_3-k_4\circ z_3\\
{}={}&(r\beta_0(1)+k_1(\sigma))\cdot \id_{K_3}=f\cdot \id_{K_3},\intertext{by \ref{17.3}.(\ref{17.3.a}) and \ref{17*.3}.(\ref{17*.3.a});}
&(\rho_3\circ \ell_4-\ell_5\circ \rho_4)_{1,1}\\
{}={}&-r(X^\dagger\circ m_3+\a_3\circ \beta_3)-w_2\circ m_3+m_4\circ w_3\\
{}={}&-(r\beta_0(1)+k_1(\sigma))\cdot\id_{M_3}=-f\cdot \id_{M_3},\intertext{by \ref{17*.2}.(\ref{17*.2.d}) and \ref{17*.3}.(\ref{17*.3.b});}
&(\rho_3\circ \ell_4-\ell_5\circ \rho_4)_{1,2}=\a_3\circ k_4-m_4\circ \a_4=0,&&\text{by (\ref{17.6});}\\
&(\rho_3\circ \ell_4-\ell_5\circ \rho_4)_{2,1}
=r(-z_3\circ \beta_3+\beta_4\circ w_3)=0,&&\text{by \ref{17*.4};}\\
&(\rho_3\circ \ell_4-\ell_5\circ \rho_4)_{2,2}=z_3\circ k_4-r\beta_4\circ \a_4\\
{}={}&-(k_1(\sigma)+r\beta_0(1))\cdot \id_{K_4}=-f\cdot \id_{K_4},\intertext{by \ref{17*.3}.(\ref{17*.3.a}) and
\ref{17.3}.(\ref{17.3.a});}
&(\rho_4\circ \ell_5)_{1,1}=-w_3\circ m_4+r\a_4\circ \beta_4=f\cdot \id_{M_4},&&\text{by \ref{17.47}.(\ref{17.47.c});}\\
&(\rho_1\circ \rho_0)=0;\\
&(\rho_2\circ \rho_1)_{1,1}=0;\\
&(\rho_2\circ \rho_1)_{1,2}
=-rX\circ \a_1+w_1\circ \a_1-\a_2\circ z_1=0,\intertext{by \ref{17.9}.(\ref{17.9.a}) and \ref{17*.4};}
&(\rho_2\circ \rho_1)_{2,1}=0;\\
&(\rho_2\circ \rho_1)_{2,2}=z_2\circ z_1=0,&&\text{by  \ref{17*.4};}\\
&(\rho_3\circ \rho_2)_{1,1}=
(-rX^\dagger-w_2)\circ(rX-w_1)\\
{}={}&-r^2X^\dagger\circ X+r(X^\dagger\circ w_1-w_2\circ X)+w_2\circ w_1=0, \intertext{by \ref{17.9}.(\ref{17.9.d}), Lemma~\ref{17*.5'}.(\ref{17*.5'.a}), and \ref{17*.4};}
&(\rho_3\circ \rho_2)_{1,2}
=-rX^\dagger\circ\a_2-w_2\circ\a_2+\a_3\circ z_2=0,\intertext{by \ref{17.9}.(\ref{17.9.e})  and \ref{17*.4};}
&(\rho_3\circ \rho_2)_{2,1}=0;\\
&(\rho_3\circ \rho_2)_{2,2}=z_3\circ z_2=0, &&\text{by \ref{17*.4};}\\
&(\rho_4\circ \rho_3)_{1,1} =rw_3\circ X^\dagger+w_3\circ w_2=0,\intertext{by \ref{17*.2}.(\ref{17*.2.c}) and \ref{17*.4}; and}
&(\rho_4\circ \rho_3)_{1,2} =w_3\circ \a_3-\a_4\circ z_3=0,&&\text{by  \ref{17*.4}.} \end{align*}
\endgroup
\vskip-25pt\end{proof}

\section{Other interpretations of $X$.}\label{10}

\begin{chunk}Adopt the notation of {\rm \ref{data17}} and  {\rm\ref{17.42}}.
 Fix elements $\e_1,\e_2,\e_3,\e_4$ in $K_1$ with $$[\e_1\w\e_2\w\e_3\w\e_4]_K=1.$$ It is not difficult to see that the homomorphism $X:M_1\to M_2$ satisfies \ref{17.9}.(\ref{17.9.b}) if and only if
$(m_2\circ X)(\theta_1)$ is equal to
$$\begin{cases}
\phantom{+}[\a_1(\e_1)\a_1(\e_2)\a_1(\e_3)\a_1(\e_4)]_M\cdot \theta_1 -[\theta_1\a_1(\e_2)\a_1(\e_3)\a_1(\e_4)]_M\cdot  \a_1(\e_1)\\
+[\theta_1\a_1(\e_1)\a_1(\e_3)\a_1(\e_4)]_M\cdot  \a_1(\e_2)
-[\theta_1\a_1(\e_1)\a_1(\e_2)\a_1(\e_4)]_M\cdot  \a_1(\e_3)\\
+[\theta_1\a_1(\e_1)\a_1(\e_2)\a_1(\e_3)]_M\cdot  \a_1(\e_4)\end{cases}$$
and $X$ satisfies \ref{17.9}.(\ref{17.9.c}) if and only if $\Big((X\circ m_2)(\theta_2)\Big)(\theta_2')+\Big((X\circ m_2)(\theta_2')\Big)(\theta_2)$ is equal to
$$\begin{cases}
-[\theta_2\a_1(\e_3)\a_1(\e_4)]_M \cdot \a_1(\e_1)\a_1(\e_2)\theta_2'+[\theta_2\a_1(\e_2)\a_1(\e_4)]_M\cdot  \a_1(\e_1)\a_1(\e_3)\theta_2'\\
-[\theta_2\a_1(\e_2)\a_1(\e_3)]_M \cdot \a_1(\e_1)\a_1(\e_4)\theta_2'
-[\theta_2\a_1(\e_1)\a_1(\e_2)]_M\cdot  \a_1(\e_3)\a_1(\e_4)\theta_2'\\
+[\theta_2\a_1(\e_1)\a_1(\e_3)]_M \cdot \a_1(\e_2)\a_1(\e_4)\theta_2'
-[\theta_2\a_1(\e_1)\a_1(\e_4)]_M\cdot  \a_1(\e_2)\a_1(\e_3)\theta_2'\\
+[\a_1(\e_1)\a_1(\e_2)\a_1(\e_3)\a_1(\e_4)]_M\cdot \theta_2\cdot \theta_2'.\end{cases}$$
Maps $X$ with the above two  properties are considered in \cite{Sl93,Ku-aci-dg}. In particular, in the language of  \cite[Def.~1.3]{Ku-aci-dg}, the map $M_2\t M_1\to P$, which is given by $$\theta_2\t \theta_1\mapsto [X(\theta_1)\cdot \theta_2]_M,$$ is called a ``partial higher order multiplication'' on $M$. (The higher order multiplication is called partial, rather than complete, because the element $$\a_1(\e_1)\w \a_1(\e_2)\w \a_1(\e_3)\w \a_1(\e_4)$$ of $\bw^4M_1$ is held fixed, rather than allowed to be arbitrary.) The papers \cite{Sl93,Ku-aci-dg} use higher order multiplication to prove that if $P$ is a local ring in which two is a unit, then the minimal resolution of the almost complete intersection ring $P/(\mfK,f)$, by free $P$-modules, is a $\DG$-algebra. In particular, the paper \cite{Sl93} proves that if $P$ is a local ring in which two is a unit, then $M$ has a complete higher order multiplication. In the present paper, we are able to obtain higher order multiplication over any commutative Noetherian ring; we do not require that two be a unit or that the ring be local. The present paper makes significant use of divided powers; see, in particular, the complex $B$ of Definition~\ref{5-21.1}. The concept of divided powers   barely appears in \cite{Sl93,Ku-aci-dg}.
 In the present paper we did not consider complete higher order multiplications.
\end{chunk}

\begin{chunk} The map $X$ of Theorem~\ref{17.9} gives  the following null homotopy:
$$\xymatrix{
0\ar[r]&M_4\ar[rr]^{m_4}\ar[dd]_{w_4=0}&&M_3\ar[rr]^{m_3}\ar[dd]^{w_3}\ar[ddll]_(.4){h_3}&&M_2\ar[rr]^{m_2}\ar[dd]^{w_2}\ar[ddll]_(.4){h_2}&&M_1\ar[rr]^{m_1}\ar[dd]^{w_1}\ar[ddll]_(.4){h_1}&&M_0\ar[dd]_{w_0=0}\ar[ddll]_(.4){h_0}\\\\
0\ar[r]&M_4\ar[rr]^{m_4}&&M_3\ar[rr]^{m_3}&&M_2\ar[rr]^{m_2}&&M_1\ar[rr]^{m_1}&&M_0,}$$
where $w_i: M_i\to M_i$ is given by 
$$w_i(\theta_i)=\beta_0(1)\theta_i-(\a_i\circ \beta_i)\theta_i;$$
$h_0$ and $h_3$ are both zero; $h_1=X$; and $h_2=X^\dagger$.
\end{chunk}

\end{document}